\newcommand\org@maketitle{}
\newcommand\@authors{}
\let\org@maketitle\maketitle
\def\maketitle{%
	\let\@authors\authors
	\nxandlist{; }{ and }{; }\@authors
	\hypersetup{
		linktocpage=true,
		pdftitle={\@title},
                pdfauthor={\@authors},
                pdfsubject={\subjclassname. \@subjclass},
		pdfkeywords={\@keywords},
	}%
	\org@maketitle
}
\renewcommand{\PrintDOI}[1]{\doi{#1}}
\let\arXiv\arxiv
\numberwithin{equation}{section}
\newtheorem{theorem}{Theorem}[section]
\newtheorem{teo}[theorem]{Theorem}
\newtheorem{lem}[theorem]{Lemma}
\newtheorem{cor}[theorem]{Corollary}
\newtheorem{pro}[theorem]{Proposition}
\theoremstyle{definition}
\newtheorem{defi}[theorem]{Definition}
\theoremstyle{remark}
\newtheorem{oss}[theorem]{Remark}
\newcommand{\e}{\varepsilon}
\newcommand{\R}{\mathbb{R}}
\newcommand{\N}{\mathbb{N}}
\newcommand{\D}{\nabla}
\newcommand{\supp}{\operatorname{spt}}
\renewcommand{\div}{\operatorname{div}}
\newcommand{\loc}{\mathrm{loc}}
\newcommand{\dist}{\operatorname{dist}}
\def\XXint#1#2#3{{\setbox0=\hbox{$#1{#2#3}{\int}$}
    \vcenter{\hbox{$#2#3$}}\kern-.5\wd0}}
\DeclareRobustCommand{\rchi}{{\mathpalette\irchi\relax}}
\newcommand{\irchi}[2]{\raisebox{\depth}{$#1\chi$}}
\mathchardef\ordinarycolon\mathcode`\:
\author{Gabriele Fioravanti}
\address{Gabriele Fioravanti\newline\indent
Dipartimento di Matematica "Giuseppe Peano"
\newline\indent
Universit\`a degli Studi di Torino
\newline\indent
Via Carlo Alberto 10, 10124, Torino, Italy}
\email{gabriele.fioravanti@unito.it}
\title[The Dirichlet problem on lower dimensional boundaries: Schauder estimates]{The Dirichlet problem on lower dimensional boundaries: Schauder estimates via perforated domains}
\subjclass[2020]{
35B65 (primary);  
35J75,  
35J25,  
35B44,  
35B53 (secondary).  
}
\keywords{Weighted elliptic equations; Singular weights; Schauder regularity estimates; Lower dimensional boundary; Dirichlet problem; Liouville type Theorems.}
\thanks{\emph{Acknowledgement}. The author is member of the INDAM (“Istituto Nazionale di Alta Matematica”) research group GNAMPA} 
\begin{document}
\begin{abstract} 
In this paper, we investigate the Dirichlet problem on lower dimensional manifolds for a class of weighted elliptic equations with coefficients that are singular on such sets. Specifically, we study the problem  
\[\begin{cases}
-\div(|y|^a A(x,y) \nabla u) = |y|^a f + \div(|y|^a F), \\
u = \psi, \quad \text{ on } \Sigma_0,
\end{cases}
\]  
where \((x,y) \in \mathbb{R}^{d-n} \times \mathbb{R}^n\), \(2 \leq n \leq d\), \(a + n \in (0,2)\), and \(\Sigma_0 = \{|y| = 0\}\) is the lower dimensional manifold where the equation loses uniform ellipticity.  

Our primary objective is to establish \(C^{0,\alpha}\) and \(C^{1,\alpha}\) regularity estimates up to \(\Sigma_0\), under suitable assumptions on the coefficients and the data. Our approach combines perforated domain approximations, Liouville-type theorems and a fine blow-up argument. 
\end{abstract}

\maketitle

\section{Introduction}

Let $2 \le n \le d$ be two integers and $z = (x,y) \in \R^{d-n} \times \R^n$. Let us define the lower dimensional manifold $\Sigma_0 := \{(x,y) \in \R^d : |y| = 0\}$, which has dimension $d-n$, and the weight $|y|^a = \dist_{\Sigma_0}^a(z)$, where the real parameter $a$ satisfies $a + n \in (0,2)$. We study the following equation
\begin{equation}\label{eq:1}
\begin{cases}
-\div(|y|^a A \nabla u)= |y|^a f+\div(|y|^aF), & \text{in }B_1\setminus \Sigma_{0},\\
u=\psi, & \text{on } \Sigma_0\cap B_1,
\end{cases}
\end{equation}
where $B_1\subset\R^d$ denotes the unit ball with center at $0$, $A:B_1\to\R^{d,d}$ is a symmetric $d$-dimensional matrix satisfying the following ellipticity condition
\begin{equation}\label{eq:unif:ell}
    \lambda|\xi|^2\le A(z)\xi\cdot\xi\le\Lambda|\xi|^2,
\end{equation}
for all $\xi\in\mathbb{R}^{d}$ and a.e. $z\in B_1$, where $0 < \lambda \le \Lambda < \infty$ are fixed constants. The terms $f:B_1\to\R$, $F:B_1\to\R^d$ and $\psi:\Sigma_0\cap B_1\to\R$ belong to suitable spaces, which will be introduced later. 
We notice that, when \(n = d\), the lower-dimensional boundary \(\Sigma_0 = \{0\}\) reduces to a single point. In this case, we assume the boundary condition is simply \(u(0) = 0\).
The operators $\D$ and $\div$ denote the gradient and the
divergence with respect to the variable $z$, respectively.
Weak solutions to this equation are naturally defined within the framework of weighted Sobolev spaces, which will be introduced in Section \ref{section:sobolev}. Thus, we say that $u$ is a weak solution to \eqref{eq:1} if $u\in H^{1,a}(B_1):=H^{1}(B_1,|y|^a dz)$, satisfies
\[
\int_{B_1}|y|^a A\D u\cdot \D\phi dz= \int_{B_1}|y|^a (f\phi-F\cdot\D\phi)dz,
\]
for every $\phi\in C_c^\infty(B_1\setminus\Sigma_0)$ and $u=\psi$ in the sense of the trace (see Definition \ref{def:weak:sol:trace:0}).

Our primary goal is to establish local regularity estimates up to $\Sigma_0$ for weak solutions to \eqref{eq:1}. Specifically, we show that,  under suitable assumptions on the data, solutions are $C^{0,\alpha}(B_{1/2})$ and, in some cases, may be $C^{1,\alpha}(B_{1/2})$.
As we will see later, our results are \emph{sharp} with respect to the assumptions on the data. Specifically, the function $|y|^{2-a-n}$ is a solution to \eqref{eq:1}, when $A=I$, $f=0$, $F=0$ and $\psi=0$ and it is $C^{0,2-a-n}$ if $a+n\in[1,2)$ and $C^{1,1-a-n}$ if $a+n\in(0,1)$. The idea behind our theorems is that this solution is the \emph{worst regular} solution to \eqref{eq:1} when $A=I$, $f=0$, $F=0$ and $\psi=0$.

The Dirichlet boundary condition $u=\psi$ on $\Sigma_0$ requires some clarifications. For general values of $a\in\R$, the trace of functions which belongs to $H^{1,a}(B_1)$ on $\Sigma_0$ might not be well defined. In the paper \cite{Nek93}, the author shows the existence of a trace operator for a large class of weighted Sobolev spaces on lower dimensional boundaries. Specifically, the following result holds.
\begin{teo}\cite{Nek93}*{Theorem 2.3}\label{teo:NEK}
Let $\Gamma$ be a $(d-n)$-dimensional $C^1$-manifold, $\dist_\Gamma$ be the distance from $\Gamma$, $a+n\in(0,2)$. Then, there exits a unique bounded linear operator
\[
T: H^{1}(B_1,\dist_\Gamma^a) \to L^2({\Gamma\cap B_1})
\]
such that
\[
Tu=u_{|_\Gamma},
\]
for every $u \in C^\infty(\overline{B_1})$.
\end{teo}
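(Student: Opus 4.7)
\medskip
\noindent\textbf{Proof plan.} The strategy is the standard one for trace theorems in weighted spaces: first establish the trace inequality on a dense class of smooth functions, then extend by continuity. Using a $C^1$ partition of unity subordinate to a cover of $\Gamma$ by coordinate neighborhoods and straightening $\Gamma$ with $C^1$ diffeomorphisms, I reduce to the model case $\Gamma = \R^{d-n}\times\{0\}$, where $\dist_\Gamma(z) = |y|$ (the Jacobian of the flattening is bounded above and below, and since $\Gamma$ is $C^1$, $\dist_\Gamma^a$ is comparable to $|y|^a$ in the new coordinates, so the weighted Sobolev norms are equivalent).

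The core estimate. For $u \in C^\infty(\overline{B_1})$ and $(x,0)\in\Gamma\cap B_1$, write for $y=r\omega$ with $\omega\in S^{n-1}$
\[
u(x,0) = u(x,r\omega) - \int_0^r \partial_s u(x,s\omega)\,ds.
\]
Applying Cauchy--Schwarz against the weight $s^{a+n-1}$ gives
\[
\left(\int_0^r |\partial_s u(x,s\omega)|\,ds\right)^{2}
\leq \left(\int_0^r s^{-(a+n-1)}ds\right)\left(\int_0^r |\D u(x,s\omega)|^2 s^{a+n-1} ds\right).
\]
The first factor is finite precisely because $a+n<2$, and equals $\frac{r^{2-a-n}}{2-a-n}$. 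Hence
\[
|u(x,0)|^2 \leq 2|u(x,r\omega)|^2 + \frac{2\,r^{2-a-n}}{2-a-n}\int_0^r |\D u(x,s\omega)|^2 s^{a+n-1}\,ds.
\]
Now multiply by $r^{a+n-1}$ (which is integrable near $0$ since $a+n>0$) and integrate over $r\in(0,r_0)$ and $\omega\in S^{n-1}$; Fubini applied to the double integral yields $\int_s^{r_0} r\,dr \leq r_0^2/2$, and passing from polar coordinates back to $dy$ produces the pointwise trace bound
\[
|u(x,0)|^2 \leq C(n,a,r_0)\int_{\{|y|<r_0\}} \bigl(|u(x,y)|^2 + |\D u(x,y)|^2\bigr)|y|^a\,dy.
\]
Integrating in $x$ over the relevant patch in $\Gamma$ and summing over the partition of unity gives
\[
\|u\|_{L^2(\Gamma\cap B_1)}^2 \leq C \|u\|_{H^1(B_1,\dist_\Gamma^a)}^2.
\]

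The extension step. The range $a+n\in(0,2)$ with $n\geq 2$ forces $a\in(-n,2-n)\subset(-n,n)$, so $|y|^a$ is a Muckenhoupt $A_2$ weight in $\R^n$, and hence $\dist_\Gamma^a$ is $A_2$ in $\R^d$. This guarantees that $C^\infty(\overline{B_1})$ is dense in $H^1(B_1,\dist_\Gamma^a)$ by standard Meyers--Serrin-type arguments for $A_p$ weights. Thus the map $u\mapsto u|_\Gamma$, defined on smooth functions and bounded by the estimate above, extends uniquely to a bounded linear operator $T:H^1(B_1,\dist_\Gamma^a)\to L^2(\Gamma\cap B_1)$. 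Uniqueness is automatic from the density.

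The main obstacle I foresee is the bookkeeping when patching with the partition of unity near $\partial(\Gamma\cap B_1)$: one must choose $r_0$ small enough that the tube $\{|y|<r_0\}$ stays inside $B_1$ over each patch, and handle patches whose support touches $\partial B_1$ by a reflection or localization argument. The analytic heart, namely the fundamental-theorem-of-calculus inequality with the weight $s^{a+n-1}$, goes through precisely because the hypothesis $a+n<2$ is exactly the integrability threshold for $\int_0^r s^{-(a+n-1)}ds$, which also explains the sharpness of the assumption.
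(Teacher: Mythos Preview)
The paper does not prove this theorem: it is quoted verbatim from Nekvinda \cite{Nek93}*{Theorem 2.3} and used as a black box throughout (see the introduction and Section~\ref{section:curved}). There is therefore no ``paper's own proof'' to compare against.

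That said, your argument is correct and is essentially the standard proof of such trace results. The one-dimensional fundamental-theorem-of-calculus estimate along rays, weighted Cauchy--Schwarz with exponent $s^{a+n-1}$, and the subsequent integration in $(r,\omega)$ are exactly the right steps; the two integrability conditions you identify, $a+n<2$ for $\int_0^r s^{-(a+n-1)}ds$ and $a+n>0$ for $\int_0^{r_0} r^{a+n-1}dr$, pin down the range of the hypothesis precisely. The density of $C^\infty(\overline{B_1})$ in the weighted space via the $A_2$ property of $|y|^a$ is also the correct way to close the argument, and matches how the paper itself defines $H^{1,a}(B_R)$ (as the completion of smooth functions). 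Your caveat about patches near $\partial B_1$ is real but routine: one can simply work on a slightly larger ball and localize, or observe that Nekvinda's original statement is for general Lipschitz domains.
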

Hence, the restriction we impose on the parameter $a+n\in(0,2)$ ensures that the boundary condition $u=\psi$ on $\Sigma_0$ makes sense. In other words, introducing the weight as a power of the distance to the boundary provides a natural framework for studying the Dirichlet problem for linear elliptic operators on lower-dimensional boundaries. Without such a weight, the solutions do not "see" the lower-dimensional sets due to capacitary reasons: for instance, a harmonic function in \( B_1 \setminus \Sigma_0 \) is the same as a harmonic function in the whole \( B_1 \).

We also emphasize that in the forthcoming work \cite{CFV24}, Cora, Vita and the author will systematically study the same equation in the broader case \(a+n > 0\), with a focus on solutions which satisfies an homogeneous \emph{conormal boundary condition} on \(\Sigma_0\) when \(a+n \in (0,2)\).

\medskip

The study of such equations falls within the theory of non uniformly elliptic operators, as the presence of the singular weight causes the operator's coefficients to blow up on the manifold $\Sigma_0$. In the seminal paper \cite{FKS82}, the authors extended the De Giorgi-Nash-Moser theory to weighted elliptic equations, where the weight arises from quasi-conformal mappings or belongs to the Muckenhoupt $A_2$ class. In particular, under suitable assumptions, they proved the validity of the Harnack inequality, ensuring H\"older regularity of solutions with a non explicit H\"older exponent. Along this line, we also refer to the work \cite{HeiKilMar06}. Our weight $|y|^a$ belongs to the Muckenhoupt $A_2$ class when $a + n \in (0, 2n)$, so our assumption $a + n \in (0, 2)$ ensures that the known results for this class of weights apply, guaranteeing that the solutions to our problem satisfy some H\"older estimates. However, the peculiar geometry of the singular set of our weight $|y|^a$, combined with its homogeneity property, allows us to obtain more refined results compared to the general theory mentioned above. 

In recent years, there have been significant contributions to the study weighted elliptic equations, where the weight behaves like the power of the distance from the boundary of a set. The most notable case is $n=1$, which is closely related to the extension theory for fractional operators developed by Caffarelli and Silvestre in their seminal paper \cite{CafSil07} (we refer also to \cite{mar}, where the authors study fractional operators in conformal geometry). This setting is now well understood, and there is a rich literature on the regularity properties of such equations.
In particular, we highlight the works \cites{SirTerVit21a, SirTerVit21b, TerTorVit22, DongVita}, where the authors establish a complete Schauder regularity theory for degenerate/singular elliptic equations, and its parabolic counterpart \cites{AFV24, AFV24b}. Additionally, we mention \cites{Dong1, Dong2}, where elliptic and parabolic weighted equations are studied, yielding alternative regularity results.

In the paper \cite{DavFenMay21}, David, Feneuil and Mayboroda developed an elliptic theory for equations which are degenerate/singular on lower dimensional boundaries. In particular, in \cite{May}, the authors extensively studied our operator in the case \( a+n=1 \), under weaker assumptions on the coefficients \( A \). They proved the solvability of the Dirichlet problem in this setting. We also refer to \cites{DavMay22} and the references therein for a broader overview of this topic.
Moreover, our class of operators also arises in the context of singular harmonic maps to study equations related to black holes (see \cites{Wei90,Wei92,LiTia91,LiTia92,LiTia93}). In particular, in \cite{Ngu11}, the author highlights the connection between these singular harmonic maps and differential operators like ours, in the case where $a < 0$. Additionally, our work is related to Mazzeo's theory of edge operators \cites{Maz1, Maz2}, which emerges in boundary problems with higher codimensional boundaries and provides essential insights into solution regularity by establishing Fredholmness in degenerate H\"older or Sobolev spaces. 

Finally, it seems that our operator could be a good model for free boundary problems of the obstacle type (see, for example, the classical papers \cites{Caf77, Caf79}), where the obstacle is very thin (with dimension less than \( d-2 \)), see also \cite{fern}. As already noted above, classical elliptic operators do not allow for such problems, as they cannot "see" small sets like these due to capacitary reasons.

\subsection*{Main results} 

The main goal of this paper is to establish local $C^{0,\alpha}$ and $C^{1,\alpha}$ regularity estimates up to the singular set $\Sigma_0$ for weak solutions to \eqref{eq:1}. These results are presented in two main theorems: the first provides H\"older estimates for the solutions, while the second establishes H\"older estimates for the gradient.

\begin{teo}\label{teo:0}
Let $2\le n\le d$, $a+n\in(0,2)$, $p>{d}/{2}$, $q>d$ and
\begin{equation}\label{eq:alpha:0}
\alpha\in (0,2-a-n)\cap(0,2-{d}/{p}]\cap(0,1-{d}/{q}]\cap(0,1).
\end{equation}
Let $A$ be a continuous matrix satisfying \eqref{eq:unif:ell} and $\|A\|_{C^0({B_1})}\le L$, $f\in L^{p,a}(B_{1})$, $F\in L^{q,a}(B_{1})^d$ and $\psi\in C^{0,1}(\Sigma_0\cap B_1)$.
Let $u$ be a weak solution to \eqref{eq:1}.

Then, $u\in C^{0,\alpha}(B_{1/2})$ and there exists a constant $c>0$, depending only on $d$, $n$, $a$, $\lambda$, $\Lambda$, $p$, $q$, $\alpha$ and $L$ such that
\begin{equation}\label{eq:c0}
\|u\|_{C^{0,\alpha}(B_{1/2})}  \le c\big(
\|u\|_{L^{2,a}(B_{1})}
+\|f\|_{L^{p,a}(B_{1})}
+\|F\|_{L^{q,a}(B_{1})^d}
+\|\psi\|_{C^{0,1}(\Sigma_0\cap B_1)}
\big).
\end{equation}
\end{teo}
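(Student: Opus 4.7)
The plan is to prove \eqref{eq:c0} by a reduction to homogeneous boundary data followed by a blow-up contradiction tailored to the singular set $\Sigma_0$. First, I reduce to $\psi\equiv 0$: extend $\psi\in C^{0,1}(\Sigma_0\cap B_1)$ to a Lipschitz function $\Psi$ on $B_1$ (standard, since $\Sigma_0$ is flat) and set $v:=u-\Psi$. Then $v$ solves the same equation with $f$ unchanged and $F$ replaced by $F-A\D\Psi\in L^{q,a}$, and it has vanishing trace on $\Sigma_0$ by Theorem~\ref{teo:NEK}. Since $\Psi\in C^{0,\alpha}(B_1)$, it suffices to prove the estimate for $v$. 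Away from $\Sigma_0$ the operator is uniformly elliptic (and $|y|^a\in A_2$ thanks to $a+n\in(0,2n)$), so the De Giorgi--Nash--Moser theory of \cite{FKS82} gives local Hölder regularity on compact subsets of $B_{1/2}\setminus\Sigma_0$. Hence the task reduces to a Campanato-type decay at every $z_0\in\Sigma_0\cap\overline{B_{1/2}}$,
\[
\Big(\dashint_{B_r(z_0)}|y|^a v^2\,dz\Big)^{1/2}\le c\, r^\alpha\mathcal{M},\qquad r\in(0,1/4],
\]
with $\mathcal{M}$ the right-hand side of \eqref{eq:c0}; together with the interior Hölder estimate this is equivalent to $u\in C^{0,\alpha}(B_{1/2})$.

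I would prove this decay by compactness and contradiction. If it failed there would exist sequences of data, solutions $(v_k)$, points $z_k\in\Sigma_0$ and radii $r_k\downarrow 0$ for which the excess $\Theta_k:=(\dashint_{B_{r_k}(z_k)}|y|^a v_k^2)^{1/2}$ exceeds $k\, r_k^\alpha$. Normalizing by $\wv_k(z):=v_k(z_k+r_kz)/\Theta_k$ on $B_{R_k}$ with $R_k\to\infty$, I would first establish the uniform dyadic growth $(\dashint_{B_R}|y|^a\wv_k^2)^{1/2}\le cR^\alpha$ for $1\le R\le R_k$: this is typically a separate auxiliary lemma proved by another contradiction argument, and relies precisely on the strict inequality $\alpha<2-a-n$. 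The assumptions $p>d/2$, $q>d$ and $\alpha\le 2-d/p$, $\alpha\le 1-d/q$ ensure that the rescaled source terms vanish in the limit, while a weighted Caccioppoli inequality together with the uniform trace bound of Theorem~\ref{teo:NEK} provide compactness in $H^{1,a}_{\loc}(\R^d)$ and preservation of the homogeneous boundary condition. Passing to the limit one obtains a nontrivial global weak solution $\wv_\infty$ of
\[
-\div(|y|^a A_\infty\D\wv_\infty)=0\text{ in }\R^d,\qquad \wv_\infty=0\text{ on }\Sigma_0,
\]
with $A_\infty$ a constant symmetric matrix satisfying \eqref{eq:unif:ell}, normalized so that $\|\wv_\infty\|_{L^{2,a}(B_1)}=1$, and with controlled growth $(\dashint_{B_R}|y|^a\wv_\infty^2)^{1/2}\le cR^\alpha$.

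The principal obstacle I foresee is the Liouville-type theorem required to conclude that any such $\wv_\infty$ must vanish, contradicting the normalization. The natural strategy is to approximate via perforated domains $B_R\setminus\{|y|<\varepsilon\}$, where the equation is uniformly elliptic and energy identities integrate by parts cleanly, and then to let $\varepsilon\downarrow 0$ using Theorem~\ref{teo:NEK} to recover the condition on $\Sigma_0$. After reducing $A_\infty$ to a normal form by a linear change of coordinates compatible with the splitting $\R^d=\R^{d-n}\times\R^n$, a Fourier decomposition in $x$ combined with an expansion in spherical harmonics on the $y$-sphere would reduce the matter to a family of Bessel-type ODEs in $|y|$, whose admissible modes vanishing at $|y|=0$ have homogeneities bounded below by $2-a-n$, matching the explicit solution $|y|^{2-a-n}$ highlighted in the introduction. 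Since $\alpha<2-a-n$ strictly, the growth bound forces $\wv_\infty\equiv 0$, closing the contradiction and establishing \eqref{eq:c0}.
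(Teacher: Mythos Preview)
Your overall architecture is sound and represents a genuine alternative to the paper's proof. The reduction to $\psi\equiv 0$ is identical. The main structural difference is that the paper does \emph{not} blow up directly at $\Sigma_0$: instead it first proves $\e$-uniform $C^{0,\alpha}$ estimates on the perforated domains $B_1\setminus\Sigma_\e$ (Theorem~\ref{teo:0alpha:e}) via a three-case blow-up (far from $\partial\Sigma_\e$, near $\partial\Sigma_\e$ but with $|y_k|/r_k\to\infty$, and $|y_k|/r_k$ bounded), and then passes to the limit $\e\to 0$ through the approximation Lemma~\ref{lem:approximation}. The advantage of the paper's route is that at each $\e$-level solutions are classically $C^{0,\alpha}$ up to $\partial\Sigma_\e$, so one can work directly with the H\"older seminorm rather than an integral Campanato quantity; your route is more direct but forces you to phrase everything in weighted $L^2$ averages and to patch boundary decay with interior FKS regularity.

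There is, however, a real gap in your Liouville sketch. The reduction of a general constant $A_\infty$ ``to a normal form compatible with the splitting $\R^{d-n}\times\R^n$'' can at best produce, after a shear in $x$ and an orthogonal change in $y$ (both preserving $|y|$ and $\Sigma_0$), a block-diagonal matrix with identity in the $x$-block and a \emph{diagonal} matrix $D$ in the $y$-block. Unless $D$ is a multiple of the identity, the operator $\div_y(|y|^a D\nabla_y\cdot)$ is \emph{not} invariant under rotations of $y$, so the equation does not separate in spherical harmonics on $\mathbb{S}^{n-1}$ and the Bessel-ODE reduction breaks down. The paper avoids this obstruction entirely: its Liouville Theorem~\ref{teo:liouville} (valid for every $\e\ge 0$) is proved via an Almgren-type identity $\partial_r H=\tfrac{2}{r}E$ on the ellipsoids $\{A^{-1}y\cdot y<r^2\}$ together with the spectral trace inequality of Lemma~\ref{lem:trace}, built from the explicit solution $|A^{-1/2}y|^{2-a-n}$; the case $n<d$ is then reduced to $n=d$ by differentiating in $x$ (Lemma~\ref{L:dx}). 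If you keep your Campanato blow-up, you can simply invoke Theorem~\ref{teo:liouville} at $\e=0$ for the contradiction and drop the spherical-harmonic argument.
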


\begin{teo}\label{teo:1}
Let $2\le n\le d$, $a+n\in(0,1)$, $p>{d}$ and
\begin{equation}\label{eq:alpha:1}
\alpha\in (0,1-a-n)\cap (0,1-{d}/{p}].
\end{equation}
Let $A$ be a $\alpha$-H\"older continuous matrix satisfying \eqref{eq:unif:ell} and $\|A\|_{C^{0,\alpha}{(B_1)}}\le L$, $f\in L^{p,a}(B_1)$, $F\in C^{0,\alpha}(B_1)$ and $\psi\in C^{1,\alpha}(\Sigma_0\cap B_1)$. Let $u$ be a weak solution to \eqref{eq:1}.

Then, $u\in C^{1,\alpha}(B_{1/2})$ and there exists a constant $c>0$, depending only on $d$, $n$, $a$, $\lambda$, $\Lambda$, $p$, $\alpha$ and $L$ such that
\begin{equation}\label{eq:c1}
\|u\|_{C^{1,\alpha}(B_{1/2})}  \le c\big(
\|u\|_{L^{2,a}(B_{1})}
+\|f\|_{L^{p,a}(B_{1})}
+\|F\|_{C^{0,\alpha}(B_1)}
+\|\psi\|_{C^{1,\alpha}(\Sigma_0\cap B_1)}
\big).
\end{equation}
In addition, $u$ satisfies the following boundary condition
\begin{equation}\label{eq:BC:conormal}
\begin{cases}
\D_x u(x,0)=\D_x\psi(x,0), \\
(A\D u +F) (x,0) \cdot e_{y_i}= 0,
\end{cases} \text{ for every } (x,0)\in \Sigma_0\cap B_{1/2}, \text{ and } i=1,\dots,n.
\end{equation}
\end{teo}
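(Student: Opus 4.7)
The plan is to combine the $C^{0,\alpha}$ regularity from Theorem \ref{teo:0} with an iterative flatness improvement at each point of $\Sigma_0$, proved by a blow-up and contradiction argument resting on a Liouville-type classification and on perforated domain approximations. First I would reduce to zero Dirichlet data: since $\psi\in C^{1,\alpha}(\Sigma_0\cap B_1)$, extend it to $\Psi\in C^{1,\alpha}(B_1)$ independent of $y$ (for instance, $\Psi(x,y):=\psi(x,0)$); then $v:=u-\Psi$ solves an equation of the same form with zero trace on $\Sigma_0$, source $\tilde f\in L^{p,a}(B_1)$, and vector field $\tilde F:=F-A\D\Psi\in C^{0,\alpha}(B_1)$, whose norms are controlled by the right-hand side of \eqref{eq:c1}. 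Henceforth we assume $\psi\equiv 0$.

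The heart of the proof is the following pointwise decay at boundary points: for every $z_0=(x_0,0)\in\Sigma_0\cap B_{1/2}$ there exists $L(z_0)\in\R^{d-n}$ such that
\[
\sup_{B_r(z_0)}\bigl|u(z)-L(z_0)\cdot(x-x_0)\bigr|\le C\,r^{1+\alpha}\,M,\qquad 0<r<\tfrac14,
\]
where $M$ denotes the sum of the norms on the right-hand side of \eqref{eq:c1}. Combined with standard interior $C^{1,\alpha}$ estimates on $\{|y|>0\}$ (where $|y|^a$ is smooth and bounded away from $0$ on compact subsets) and a classical interpolation, this decay yields \eqref{eq:c1}. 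To prove it, assume it fails: one obtains sequences $u_k$, points $z_k\in\Sigma_0$, radii $r_k\to 0^+$ and optimal $L^2$-affine approximants $L_k\cdot x$ with $\theta_k:=\sup_{B_{r_k}(z_k)}|u_k-L_k\cdot x|>k\,r_k^{1+\alpha}$. The rescalings $v_k(z):=\theta_k^{-1}\bigl(u_k(z_k+r_k z)-L_k\cdot(r_k x)\bigr)$ satisfy $\sup_{B_1}|v_k|=1$ and solve rescaled equations with matrix $A(z_k+r_k\,\cdot)$ and data whose norms vanish by scaling (using $\alpha\le 1-d/p$ for $f$ and $\alpha<1-a-n$ for $F$), with zero trace on $\Sigma_0$. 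The uniform $C^{0,\alpha}$ estimate of Theorem \ref{teo:0} and weighted Sobolev compactness produce a locally uniform limit $v_\iy$ on $\R^d$ solving the constant coefficient equation $-\div(|y|^a A_\iy\D v_\iy)=0$ on $\R^d\setminus\Sigma_0$, with zero trace on $\Sigma_0$ and polynomial growth $|v_\iy(z)|\le C(1+|z|^{1+\alpha})$.

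The contradiction comes from a Liouville-type theorem, which is the chief technical ingredient: any such entire solution with zero trace on $\Sigma_0$ and growth strictly below order $2-a-n$ must be an affine function of $x$ alone, $v_\iy(z)=L\cdot x$. The threshold is sharp, since the explicit homogeneous solution $|y|^{2-a-n}$ grows exactly at order $2-a-n$, and the hypothesis $\alpha<1-a-n$ guarantees $1+\alpha<2-a-n$. The perforated domain approach enters here: working on $\R^d\setminus\{|y|<\e\}$, where the equation is uniformly elliptic, one derives energy and monotonicity identities with controlled boundary terms on the shells $\{|y|=\e\}$, and uses $a+n<2$ to pass to the limit $\e\to 0^+$. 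The resulting affine form, combined with the $L^2$-orthogonality to linear functions of $x$ built into the optimal choice of $L_k$, forces $\|v_\iy\|_{L^2(B_1)}=0$, contradicting $\sup_{B_1}|v_\iy|=1$.

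Finally, the boundary conditions \eqref{eq:BC:conormal} follow from the $C^{1,\alpha}$ regularity just obtained. The tangential identity $\D_x u(x,0)=\D_x\psi(x,0)$ is immediate from $u=\psi$ on $\Sigma_0$ and continuity of the tangential gradient up to $\Sigma_0$. The conormal vanishing is obtained by testing the weak formulation against $\phi_0\,\eta_\e$, with $\phi_0\in C_c^\iy(B_{1/2})$ and $\eta_\e$ a radial cutoff in $|y|$ vanishing on $\{|y|<\e/2\}$ and equal to $1$ on $\{|y|>\e\}$: the extra term $\int |y|^a(A\D u+F)\cdot\D\eta_\e\,\phi_0\,dz$ scales like $\e^{a+n-1}$ times the trace of $(A\D u+F)\cdot e_{y_i}$ on $\Sigma_0$, and since $a+n-1<0$ this forces the trace to vanish as $\e\to 0^+$.
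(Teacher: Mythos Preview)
Your blow-up scheme has a genuine gap exactly where the paper's architecture is designed to intervene: the vector field $F$ does \emph{not} disappear under your rescaling. In the equation for $v_k$ the field term is $\theta_k^{-1}r_k\,\div\bigl(|y|^aF(z_k+r_k\,\cdot)\bigr)$; freezing $F$ at $z_k=(x_k,0)$ and integrating by parts against $\phi\in C_c^\infty$ leaves the residual
\[
\frac{r_k}{\theta_k}\int a\,|y|^{a-2}\,y\cdot F_y(x_k,0)\,\phi(z)\,dz,
\]
where $F_y$ denotes the last $n$ components. Nothing in your hypotheses forces $F_y(x_k,0)=0$, and your only control is $\theta_k>k\,r_k^{1+\alpha}$, i.e.\ $r_k/\theta_k<k^{-1}r_k^{-\alpha}$, which need not tend to zero. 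The condition $\alpha<1-a-n$ plays no role here; it is needed only for the Liouville threshold $1+\alpha<2-a-n$, not for the scaling of $F$. This obstruction is precisely the content of Remark~\ref{R:F:radial}, and the paper's proof is built around it: one first subtracts $g(x,y)=A_3^{-1}(x,0)F_y(x,0)\cdot y$ so that the modified field satisfies $(F-A\nabla g)(x,0)\cdot e_{y_i}=0$, then applies the perforated-domain $\e$-uniform estimate (Theorem~\ref{teo:1alpha:e}); since constructing $g$ needs $A,F\in C^{1,\alpha}$, a second approximation by mollification together with the a~priori estimate of Proposition~\ref{P:a:priori} closes the argument for general $C^{0,\alpha}$ data. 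Your subtraction $u\mapsto u-\Psi(x)$ does not produce $\tilde F_y(x,0)=0$.

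Two further points. (i) Your $v_k$ does \emph{not} have zero trace on $\Sigma_0$: $v_k(x,0)=-\theta_k^{-1}r_kL_k\cdot x$, and you give no control on $\theta_k^{-1}r_k|L_k|$. Relatedly, the Liouville statement is misquoted: by Theorem~\ref{teo:liouville} an entire solution with zero trace on $\Sigma_0$ and growth below $2-a-n$ is identically \emph{zero}; a nontrivial $L\cdot x$ does not vanish on $\Sigma_0$ and is therefore excluded from that class. (ii) Your derivation of the conormal condition breaks for $n\ge2$: since $\nabla\eta_\e=\eta_\e'(|y|)\,y/|y|$ is radial in $y$, the leading term involves $\int_{S^{n-1}}(A\nabla u+F)(x,0)\cdot\omega\,d\sigma(\omega)=0$ by symmetry, so the boundary trace $(A\nabla u+F)(x,0)\cdot e_{y_i}$ is never isolated by your $\e^{a+n-1}$ scaling. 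In the paper the conormal condition is instead read off from the pointwise gradient bound \eqref{eq:formal:conormal:eps} on $\partial\Sigma_\e$, which is a by-product of the perforated-domain argument.
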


Before presenting the proof idea, we recall a method developed in \cites{Soave,SirTerVit21a} to establish regularity estimates for weighted elliptic equations that are degenerate or singular on a set of codimension one, that is, when \(n=1\). This case, where the singular set is an hyperplane of dimension \(d-1\), does not fall under the scope of our study, which focuses on \(n \geq 2\). 
The authors develop a regularity theory for such equations by regularizing the weight and using an approximation argument: they first establish $\varepsilon$-stable regularity results for solutions with the regularized weight $(\varepsilon^2 + y^2)^{a/2}$, and then take the limit as $\varepsilon \to 0$.
However, this approach seems not to work in our case, where $n\ge2$. Indeed, by regularizing the weight, that is, considering $\rho_\e^a(y):=(\varepsilon^2 + |y|^2)^{a/2}$ and solutions to uniformly elliptic problems with this type of weight, the $H^1(\rho_\e^a)$-capacity of $\Sigma_0$ will be zero. As we discussed earlier, in this situation, the \emph{classical} Sobolev spaces will fail to capture information about the boundary condition on the lower dimensional set $\Sigma_0$. Consequently, the approximation problem with this type of weight loses critical information about the boundary condition, making it impossible to recover uniform regularity estimates for the weighted problem.

We adopt a different approach based on perforated domains, aiming to establish uniform estimates in this framework. For small \( 0 < \varepsilon \ll 1 \), we define \( \Sigma_\varepsilon := \{|y| \le \varepsilon\} \) as the \( \varepsilon \)-neighbourhood of \( \Sigma_0 \), noting that its boundary $\partial \Sigma_\e =\{|y|=\e\}$ has dimension $d-1$ and $\partial \Sigma_\e \to \Sigma_ 0$ as $\e\to 0$. The central idea is to approximate the lower dimensional boundary, which has dimension $d-n$, with a \emph{classical} boundary of dimension $d-1$ and impose the Dirichlet condition on this set. Hence, we consider solutions to the problem
\[
\begin{cases}
-\div(|y|^a A \nabla u) = |y|^a f + \div(|y|^a F), & \text{in } B_1 \setminus \Sigma_\varepsilon, \\
u = 0, & \text{on } \partial \Sigma_\varepsilon \cap B_1.
\end{cases}
\]
In this context, since the singular set $\Sigma_0$ is sufficiently far from \( B_1 \setminus \Sigma_\varepsilon \), we can apply classical regularity theory to obtain \( C^{0,\alpha}(B_{1/2} \setminus \Sigma_\varepsilon) \) and \( C^{1,\alpha}(B_{1/2} \setminus \Sigma_\varepsilon) \) regularity estimates, with constants that may depend on \( \varepsilon \). The crucial step is proving that these estimates are uniform as \( \varepsilon \to 0 \), as shown in Theorems \ref{teo:0alpha:e} and \ref{teo:1alpha:e}. Once we have these uniform estimates, we employ an approximation argument (see Section \ref{s:approximation}) to pass to the limit as \( \varepsilon \to 0 \), thereby recovering the desired regularity results for the original problem.

We emphasize that proving \( C^{1,\alpha} \) regularity for solutions to \eqref{eq:1} requires a refined approach. Specifically, the uniform estimates in perforated domains require an additional assumption on the field \( F \), as highlighted in Remark \ref{R:F:radial}. Consequently, proving the main theorem necessitates a double approximation strategy: the first via perforated domains and the second through a standard mollification argument. This is combined with some \emph{a priori} estimates for solutions to \eqref{eq:1} under an additional boundary condition on \( \Sigma_0 \), as detailed in Proposition \ref{P:a:priori}.

The strategy for establishing the $\e$-uniform estimates is based on a contradiction argument combined with a blow-up procedure, drawing inspiration from Simon's work \cite{Sim95}. A key component of this approach is the following Liouville-type theorem, which applies to entire solutions satisfying a specific growth-control condition at infinity.

\begin{teo}\label{teo:liouville}
Let $2\le n\le d$, $a+n\in(0,2)$, $\e\ge0$. Let $A$ be a constant symmetric matrix satisfying \eqref{eq:unif:ell} and $u$ be an entire solution to 
\begin{equation}\label{eq:entire:solution}
\begin{cases}
-\div(|y|^a  A\nabla {u})= 0, & \text{in }\R^d\setminus \Sigma_{\e},\\
u=0, & \text{on } \partial\Sigma_\e,
\end{cases}
\end{equation}
(see Definition \ref{def:weak:sol}).
Assume that there exist constants $c>0$, $\gamma\in (0,2-a-n)$ such that
\begin{equation}\label{eq:growth}
|u(z)|\le c(1+|z|^\gamma), \quad \text{for a.e. }z\in\R^d\setminus \Sigma_{\e},
\end{equation}
Then, $u$ is identically zero.

\end{teo}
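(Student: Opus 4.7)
My plan is a contradiction/blow-down argument that reduces the assertion to a Liouville statement for the unperforated problem ($\varepsilon=0$), combined with a classification of homogeneous Dirichlet solutions to the constant-coefficient weighted operator.

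First, since $u$ vanishes on $\partial\Sigma_\e$, testing the weak formulation against $u\eta^2$ for a standard cut-off $\eta$ (equal to $1$ on $B_R$, supported in $B_{2R}$, and $|\nabla\eta|\le C/R$) yields the Caccioppoli estimate
\[
\int_{B_R\setminus\Sigma_\e}|y|^a|\nabla u|^2\,dz \le \frac{C}{R^2}\int_{B_{2R}\setminus\Sigma_\e}|y|^a u^2\,dz \le C R^{d+a+2\gamma-2},
\]
where the last step combines~\eqref{eq:growth} with $\int_{B_R}|y|^a\,dz\le C R^{d+a}$. Rescale by $u_R(z):=R^{-\gamma}u(Rz)$: each $u_R$ solves the same equation on $\R^d\setminus\Sigma_{\e/R}$ with zero trace on $\partial\Sigma_{\e/R}$, enjoys the uniform growth $|u_R(z)|\le c(1+|z|^\gamma)$, and a uniform energy bound on $B_1$. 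Because $|y|^a$ is a Muckenhoupt $A_2$ weight for $a+n\in(0,2n)$ (hence in our range), the De~Giorgi--Nash--Moser theorem of~\cite{FKS82} gives equi-Hölder regularity of $\{u_R\}$ on compact sets, so Arzelà--Ascoli yields a subsequence $u_{R_k}\to u_\infty$ locally uniformly with $R_k\to\infty$. Since $\e/R_k\to 0$, passing to the limit in the weak formulation against test functions supported in $\R^d\setminus\Sigma_0$ shows $u_\infty$ is an entire weak solution of $-\div(|y|^a A\nabla u_\infty)=0$ in $\R^d\setminus\Sigma_0$; equi-continuity combined with the vanishing of $u_{R_k}$ on $\partial\Sigma_{\e/R_k}\to\Sigma_0$ forces, via Theorem~\ref{teo:NEK}, the trace $u_\infty=0$ on $\Sigma_0$; and the growth $|u_\infty(z)|\le c(1+|z|^\gamma)$ passes to the limit.

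I would then show $u_\infty\equiv 0$ by exploiting the scale invariance of the unperforated problem via an Almgren-type frequency monotonicity for the weighted operator (in the spirit of~\cites{Soave,SirTerVit21a,TerTorVit22}): the quantity
\[
N(R):=\frac{R\int_{B_R}|y|^a A\nabla u_\infty\cdot\nabla u_\infty\,dz}{\int_{\partial B_R}|y|^a u_\infty^2\,dS}
\]
is non-decreasing, and its limit $N(\infty)$ equals the homogeneity degree of a blow-down $\Phi$ of $u_\infty$, a homogeneous solution vanishing on $\Sigma_0$. Polynomial growth forces $N(\infty)\le\gamma$. Classifying such homogeneous Dirichlet solutions by separation of variables (Fourier in $x$ and spherical harmonics in $y$; for the radial-in-$y$/zero-$x$-mode component this reduces to the Euler equation $s^2+(n+a-2)s=0$, whose minimal positive root is $s=2-a-n$) shows that every non-zero homogeneous solution vanishing on $\Sigma_0$ has degree at least $2-a-n$. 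Hence $\Phi\equiv 0$, so $u_\infty\equiv 0$. Since the limit is zero along every convergent subsequence, $u_R\to 0$ locally uniformly, so $\sup_{|z|\le T}|u(z)|<\delta (T/S)^\gamma$ for $T\ge R_0(\delta) S$; choosing $S=\sqrt T$ upgrades this to $\sup_{|z|\le T}|u|\le \delta T^{\gamma/2}$ for large $T$. Iterating the whole scheme with improved exponents $\gamma/2^k$ forces $u$ to have sub-polynomial growth, hence to be globally bounded; the weighted maximum principle in the perforated case ($\e>0$), or a further application of the same argument with a vanishing growth exponent in the unperforated case, then gives $u\equiv 0$.

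The main obstacle is the sharp spherical/ODE classification: showing that no non-zero homogeneous Dirichlet solution of the constant-coefficient weighted operator has degree strictly below $2-a-n$. For $A=I$ this is a direct Bessel/Euler computation, but for a general constant symmetric positive-definite $A$ one cannot simply diagonalise, since the weight $|y|^a$ breaks invariance under linear maps mixing the $x$ and $y$ blocks. A more careful analysis of the weighted spherical Rayleigh quotient, or a comparison argument exploiting the tangential rotational invariance in $x$ together with the explicit radial-in-$y$ barrier $|y|^{2-a-n}$, is therefore required to rule out lower-degree homogeneous solutions.
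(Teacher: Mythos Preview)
Your blow-down strategy has two genuine gaps, and the paper's proof avoids both by a different and more direct route.

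\textbf{Gap 1: the passage from $u_\infty\equiv 0$ to $u\equiv 0$.} Locally uniform convergence $u_R\to 0$ only gives $\sup_{B_S}|u_R|<\delta$ for $R\ge R_0(\delta,S)$, and $R_0$ genuinely depends on $S$ (the equi-H\"older constant from \cite{FKS82} gives no rate). Setting $S=\sqrt T$ then requires $\sqrt T\ge R_0(\delta,\sqrt T)$, a fixed-point condition you have no control over; the halving of the exponent does not follow. Likewise, the ``weighted maximum principle'' you invoke at the end for bounded $u$ on the \emph{unbounded} domain $\R^d\setminus\Sigma_\e$ is a Phragm\'en--Lindel\"of statement that itself needs a barrier of the right decay---essentially the same issue again.

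\textbf{Gap 2: Almgren monotonicity and the homogeneous classification for general $A$.} You correctly flag this as the main obstacle and leave it open. On round balls, the Rellich identity underlying $N'\ge 0$ does not close for $-\div(|y|^aA\nabla\,\cdot\,)$ when $A$ mixes the $x$ and $y$ blocks, because the weight breaks the needed invariance.

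The paper bypasses both problems by working directly with $u$ (not a blow-down) and by replacing balls with \emph{ellipsoids adapted to $A$}. It first reduces to $n=d$ by differentiating in $x$: growth $\gamma<2$ forces $u$ to be at most linear in $x$, and each coefficient $u_i(y)$ inherits the equation and the growth, so it suffices to treat $n=d$. In that case one sets $\Omega_r=\{A^{-1}y\cdot y<r^2\}$ and observes that the explicit barrier $\bar u(y)=|A^{-1/2}y|^{2-a-n}$ solves the equation pointwise; testing the equation for $\bar u$ against $v^2/\bar u$ and integrating by parts gives the sharp, $\e$-uniform trace inequality
\[
\int_{\Omega_r}|y|^a A\nabla v\cdot\nabla v\,dy \;\ge\; (2-a-n)\int_{\partial\Omega_r}|y|^a v^2\,d\sigma .
\]
Combined with the elementary identity $\partial_r H(u,r)=\tfrac{2}{r}E(u,r)$ on these ellipsoids, Gronwall yields $H(u,r)\ge H(u,r_0)(r/r_0)^{2(2-a-n)}$ for all $r\ge r_0$ large enough that $\Sigma_\e\subset\Omega_{r_0}$. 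The growth hypothesis gives $H(u,r)\le c(1+r^{2\gamma})$ with $\gamma<2-a-n$, forcing $H(u,r_0)=0$ for every such $r_0$, hence $u\equiv 0$ outside $\Omega_{r_0}$; uniqueness for the Dirichlet problem in $\Omega_{r_0}\setminus\Sigma_\e$ then finishes. No blow-down, no iteration, no classification of homogeneous profiles, and the argument is uniform in $\e\ge 0$ from the start. The key trick you are missing is precisely the use of $A$-ellipsoids together with the explicit barrier $|A^{-1/2}y|^{2-a-n}$, which produces the sharp constant $2-a-n$ for free.
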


Finally, as a consequence of our main theorems, we extend our results to more general weighted equations, where the weight $\delta$ behaves like a distance function from a regular manifold $\Gamma$ of dimension $d-n$ (see Definition \ref{def:rho}). Specifically, in Corollaries \ref{cor:0} and \ref{cor:1}, we are able to prove local \( C^{0,\alpha}(B_{1/2} ) \) and \( C^{1,\alpha}(B_{1/2}) \) regularity estimates for weak solutions to the following equation 
\begin{equation}\label{eq:1:cor}
\begin{cases}
-\div(\delta^a A \nabla u)= \delta^a f+\div(\delta^aF), & \text{in }B_1\setminus \Gamma,\\
u=\psi, & \text{on } \Gamma\cap B_1.
\end{cases}
\end{equation}
The precise definition of solutions to \eqref{eq:1:cor} will be given later in Section \ref{section:curved}.

\subsection*{Structure of the paper}

The paper is organized as follows. In Section \ref{section:setting}, we set up the problem by introducing weighted Sobolev spaces, discussing their basic properties, and providing the definition of weak solutions to our equation along with some preliminary results, including the approximation Lemma \ref{lem:approximation}. Section \ref{section:liouville} is devoted to the proof of the Liouville-type Theorem \ref{teo:liouville}. In Sections \ref{section:holder} and \ref{section:schauder}, we establish the main results, Theorems \ref{teo:0} and \ref{teo:1}, which concern $C^{0,\alpha}$ and $C^{1,\alpha}$ regularity, respectively. Lastly, in Section \ref{section:curved}, we extend these results to solutions of the more general equation \eqref{eq:1:cor}.

\section{Functional setting and preliminary results}\label{section:setting}

\subsection{Weighted Sobolev Spaces}\label{section:sobolev}

Let $a+n\in(0,2)$, $R>0$ and $B_R:=\{z\in\R^d: |z|<R\}$ be the ball centered in $0$ and radius $R$.
We define the weighted Lebesgue spaces
\[L^{p,a}(B_R):=L^p(B_R,|y|^a dz),\]
and for vector field
\[L^{p,a}(B_R)^d:=L^p(B_R,|y|^a dz)^d.\]
The Sobolev space $H^{1,a}(B_R)$ is defined as the completion of $C^\infty(\overline{B_R})$ with respect to the norm
\begin{equation}\label{eq:Norm:omega}
\|u\|_{H^{1,a}(B_R)}=\Big(\int_{\Omega} |y|^a u^2 dz +\int_{B_R} |y|^a |\D u|^2 dz\Big)^{1/2},
\end{equation}
where $C^\infty(\overline{B_R})=\{u_{|_{B_R}}: u \in C_c^\infty(\R^d)\}$. 

Since we are interested in functions which vanish on $\Sigma_0 = \{|y|=0\}$, we define the Sobolev space $\tilde H^{1,a}(B_R)$ as the completion of $C_c^\infty(\overline{B_R}\setminus\Sigma_0)$ with respect to the norm 
$\|\cdot\|_{H^{1,a}(B_R)}$.

Additionally, we define the Sobolev space $ H^{1,a}_0(B_R)$ as as the completion of $C_c^\infty({B_R}\setminus\Sigma_0)$ with respect to the norm $\|\cdot\|_{H^{1,a}(B_R)}$, which contains functions having zero trace on $\partial (B_R\setminus\Sigma_0)$.
\subsection{Weighted Sobolev Spaces in perforated domains}\label{section:sobolev:buco}
In the rest of the paper, we use the notation $0<\e\ll 1$ to indicate that $\e$ is a small positive number. Let us define
\[\Sigma_\e:=\{ (x,y):|y|\le \e  \},\quad\partial\Sigma_\e:=\{(x,y):|y|= \e  \}.\]
We set
\[L^{p,a}(B_R\setminus\Sigma_\e):=L^p(B_R\setminus\Sigma_\e,|y|^a dz),\]
and
\[L^{p,a}(B_R\setminus\Sigma_\e)^d:=L^p(B_R\setminus\Sigma_\e,|y|^a dz)^d.\]
Let us define the norm
\[\|u\|_{H^{1,a}(B_R\setminus\Sigma_\e)}=\Big(\int_{B_R\setminus \Sigma_\e} |y|^a u^2 dz +\int_{B_R\setminus \Sigma_\e} |y|^a |\D u|^2 dz\Big)^{1/2}.\]
We define Sobolev spaces in perforated domains as follows.
\begin{enumerate}
\item [$\bullet$] $ {H}^{1,a}(B_R\setminus\Sigma_\e)$ as the completion of $C^\infty(\overline{B_R\setminus\Sigma_\e})$ w.r.t. the norm $\|\cdot\|_{H^{1,a}(B_R\setminus\Sigma_\e)}$,
\item [$\bullet$] $\tilde {H}^{1,a}(B_R\setminus\Sigma_\e)$ as the completion of $C_c^\infty(\overline{B_R}\setminus\Sigma_\e)$ w.r.t. the norm $\|\cdot\|_{H^{1,a}(B_R\setminus\Sigma_\e)}$,
\item [$\bullet$] ${H}^{1,a}_0(B_R\setminus\Sigma_\e)$ as the completion of $C_c^\infty({B_R\setminus\Sigma_\e})$ w.r.t. the norm $\|\cdot\|_{H^{1,a}(B_R\setminus\Sigma_\e)}$.
\end{enumerate}
When $\e=0$, we identify the spaces 
\[
L^{p,a}(B_R\setminus\Sigma_0)=L^{p,a}(B_R),\quad \tilde{H}^{1,a}(B_R\setminus\Sigma_0)= \tilde{H}^{1,a}(B_R), \quad {H}^{1,a}_0(B_R\setminus\Sigma_0)= {H}_0^{1,a}(B_R).
\]
Moreover, since the Poincaré inequality holds true (see Proposition \ref{P:basic}) in $\tilde{H}^{1,a}(B_R\setminus\Sigma_\e)$ for every $0\le \e\ll 1$, we have that 
\[\|u\|_{H^{1,a}_0(B_R\setminus\Sigma_\e)}=\Big(\int_{B_R\setminus \Sigma_\e} |y|^a |\D u|^2 dz \Big)^{1/2},\]
defines an equivalent norm to $\|\cdot\|_{H^{1,a}(B_R\setminus\Sigma_\e)}$ in $\tilde H^{1,a}(B_R\setminus\Sigma_\e)$.

\begin{oss}\label{rem:inclusion}
For \(\varepsilon > 0\), functions in \(\tilde{H}^{1,a}(B_R \setminus \Sigma_{\varepsilon})\) can be identified with their trivial extensions in the whole \(B_R\). Consequently, we have the following inclusion of spaces:
\[
\tilde{H}^{1,a}(B_R \setminus \Sigma_{\varepsilon}) \subset \tilde{H}^{1,a}(B_R).
\]
We note that this result is frequently used throughout the paper, particularly in the analysis of approximations on perforated domains.
\end{oss}

The following Proposition establishes several fundamental inequalities in the space $\tilde H^{1,a}(B_R\setminus\Sigma_\e)$, namely the Hardy inequality, the Poincaré inequality, the Poincaré trace inequality, and a Sobolev-type inequality, which are uniform with respect to the parameter $\e$.

\begin{pro}\label{P:basic}
Let $2\le n \le d$, $a+n\in(0,2)$, $R>0$ and $0\le\e\ll 1$. Then, there exist a constant $c>0$, depending only on $d$, $n$, $a$ and $R$ such that 
\begin{align}
&\int_{B_R}|y|^a \frac{u^2}{|y|^2}dz\le c\int_{B_R}|y|^a|\D u|^2dz,\label{eq:hardy}\\
&\int_{B_R}|y|^a u^2 dz \le c \int_{B_R}|y|^a |\nabla u|^2 dz,\label{eq:poincaré}\\
&\int_{ \partial B_R}|y|^a u^2 d\sigma\le c \int_{B_R}|y|^a |\nabla u|^2 dz,\label{eq:poincaré:trace}\\
&\Big(\int_{B_R}|y|^{a} |u|^{2^*} dz\Big)^{2/2^*} dz \le c \int_{B_R}|y|^a |\nabla u|^2 dz,\label{eq:embedding}
\end{align}
for every $u\in C_c^\infty(\overline{B_R}\setminus\Sigma_\e)$. In the last inequality $2^*:=2d/(d-2)$ if $d>2$ and $2^*$ can be replaced by any $p \in [1,\infty)$ if $d=2$, and in this case, the constant $c>0$ also depends on $p$.
\end{pro}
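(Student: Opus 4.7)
I would prove the four inequalities in sequence, with the Hardy inequality (\ref{eq:hardy}) as the cornerstone: (\ref{eq:poincaré}) will follow trivially from it, (\ref{eq:poincaré:trace}) via the divergence theorem plus Poincar\'e, and (\ref{eq:embedding}) via $A_2$-weighted Sobolev theory combined with the Poincar\'e bound. The overarching point is the $\e$-uniformity of the constants, which will be automatic since each argument depends only on $u$ being smooth and compactly supported in $\R^d\setminus\{y=0\}$ (a property enjoyed by every element of $C_c^\infty(\overline{B_R}\setminus\Sigma_\e)$ irrespective of $\e \ge 0$), on the homogeneity of the weight $|y|^a$, and on the fixed geometry of $B_R$.

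For (\ref{eq:hardy}) I would reduce to the classical $n$-dimensional weighted Hardy inequality by slicing in $y$. For each fixed $x$, the function $y\mapsto u(x,y)$ is smooth with compact support in $\R^n\setminus\{0\}$; applying the divergence theorem to the vector field $\Phi(y)=|y|^{a-2} y$ on $\R^n$, whose divergence equals $(a+n-2)|y|^{a-2}$ (nonzero since $a+n\neq 2$), and using Cauchy--Schwarz yields
\[
\int_{\R^n} |y|^{a-2} u(x,y)^2 dy \le \frac{4}{(a+n-2)^2}\int_{\R^n} |y|^a |\nabla_y u(x,y)|^2 dy.
\]
Integrating in $x$ and using $|\nabla_y u| \le |\nabla u|$ then gives (\ref{eq:hardy}) with the explicit constant $4/(a+n-2)^2$, depending only on $a$ and $n$. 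Next, (\ref{eq:poincaré}) is immediate from the pointwise bound $|y|^a u^2 \le R^2 |y|^{a-2} u^2$ on $B_R$. For (\ref{eq:poincaré:trace}) I would apply the divergence theorem to the vector field $\Psi(z):=|y|^a u^2 z$ on $B_R$: since $u$ has compact support away from $\Sigma_\e\supset\{y=0\}$ the integrand is regular and the only boundary contribution comes from $\partial B_R$ (outward normal $z/R$), while a direct calculation gives $\operatorname{div}\Psi = (a+d)|y|^a u^2 + 2|y|^a u\, z\cdot\nabla u$, so
\[
R\int_{\partial B_R} |y|^a u^2 d\sigma = (a+d)\int_{B_R} |y|^a u^2 dz + 2\int_{B_R} |y|^a u\, z\cdot\nabla u dz.
\]
Young's inequality $2R|u||\nabla u|\le u^2 + R^2|\nabla u|^2$ on the cross term, together with the Poincar\'e inequality just proved to absorb the $L^{2,a}$-terms, finishes (\ref{eq:poincaré:trace}).

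The Sobolev-type inequality (\ref{eq:embedding}) is the main technical hurdle. Since $a+n\in(0,2)\subset(0,2n)$, one checks that $|y|^a$ belongs to the Muckenhoupt $A_2$ class, so the general weighted Sobolev embedding on Lipschitz domains (Fabes--Kenig--Serapioni \cite{FKS82}; see also \cite{HeiKilMar06}) provides $H^{1,a}(B_R)\hookrightarrow L^{q,a}(B_R)$ for some $q>2$ depending only on the $A_2$-constant of $|y|^a$ and the domain. Because $a<0$ in our range, the natural scale-invariant weighted critical exponent $2(d+a)/(d+a-2)$ (when positive) dominates $2^*=2d/(d-2)$, so a H\"older inequality on the finite-measure space $(B_R,|y|^a dz)$ reduces the exponent to $2^*$; combined with Poincar\'e to absorb the $L^{2,a}$-term on the right-hand side, this yields (\ref{eq:embedding}). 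The case $d=2$ is completely analogous and gives $L^{p,a}$ for every $p<\infty$, with constant depending on $p$. The $\e$-uniformity of the constants holds throughout, since the $A_2$-Sobolev constant depends only on the intrinsic properties of $|y|^a$ and on the geometry of $B_R$, both of which are independent of $\e$.
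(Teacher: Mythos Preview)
Your arguments for (\ref{eq:hardy}) and (\ref{eq:poincaré}) match the paper's (which simply cites Hardy as classical and deduces Poincar\'e exactly as you do). For (\ref{eq:poincaré:trace}) you use a Rellich-type identity via $\div(|y|^a u^2 z)$, whereas the paper instead applies the \emph{unweighted} trace embedding $H^1(B_R)\hookrightarrow L^2(\partial B_R)$ to the function $|y|^{a/2}u\in C_c^\infty(\overline{B_R}\setminus\Sigma_\e)$ and then expands $|\nabla(|y|^{a/2}u)|^2$, controlling the extra terms by Hardy. Both arguments are correct and of comparable length; your identity has the minor advantage of giving an explicit constant.

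For (\ref{eq:embedding}), however, your sketch has a genuine gap. The Fabes--Kenig--Serapioni theory for $A_2$ weights only yields an embedding $H^{1,a}(B_R)\hookrightarrow L^{q,a}(B_R)$ for \emph{some} $q>2$ depending on the $A_2$ constant; it does not in general reach the scale-invariant exponent $2(d+a)/(d+a-2)$ that you invoke (and that formula is not even meaningful when $d+a\le 2$, which can happen in our range). Hence your H\"older reduction from the FKS exponent down to $2^*$ is not justified by the cited reference. The paper's route is far more elementary and bypasses $A_2$ theory entirely: since $a+n<2\le n$ forces $a<0$, one has $|y|^a\le R^{-2a/(d-2)}\,|y|^{a\cdot 2^*/2}$ on $B_R$, so
\[
\Big(\int_{B_R}|y|^a|u|^{2^*}dz\Big)^{2/2^*}\le c\Big(\int_{B_R}\big(|y|^{a/2}|u|\big)^{2^*}dz\Big)^{2/2^*}\le c\int_{B_R}\Big((|y|^{a/2}u)^2+|\nabla(|y|^{a/2}u)|^2\Big)dz,
\]
the second inequality being the \emph{classical unweighted} Sobolev embedding applied to the smooth function $|y|^{a/2}u$. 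Expanding $|\nabla(|y|^{a/2}u)|^2$ produces, besides $|y|^a|\nabla u|^2$, terms of the form $|y|^{a-2}u^2$ and $|y|^{a-1}|u||\nabla u|$, all of which are absorbed by $\int_{B_R}|y|^a|\nabla u|^2$ via Hardy (\ref{eq:hardy}), Young and Poincar\'e (\ref{eq:poincaré}). This same ``apply the unweighted inequality to $|y|^{a/2}u$'' trick is also what drives the paper's proof of (\ref{eq:poincaré:trace}), and is the idea missing from your treatment of (\ref{eq:embedding}).
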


\begin{proof}

The Hardy inequality \eqref{eq:hardy} is well known nowadays, see, for example \cite{Maz11}.

The Poincaré inequality \eqref{eq:poincaré} immediately follows by the validity of the Hardy inequality \eqref{eq:hardy}, in fact
\[
\int_{B_R}|y|^a u^2 dz\le c \int_{B_R}|y|^a \frac{u^2}{|y|^2}dz,
\]
for some $c>0$ depending only on $R$.

By using the classical embedding $H^1(B_R) \hookrightarrow L^2(\partial B_R)$ to the function $|y|^{a/2} u \in C_c^\infty(\overline{B_R}\setminus\Sigma_\e)$, the Hardy inequality \eqref{eq:hardy}, combined with H\"older and Young inequalities, we have that
\begin{align*}
\int_{\partial B_R} |y|^{a}u^2 d\sigma &\le c \int_{ B_R}|\D(|y|^{a/2}u)|^2 dz\le c \int_{ B_R} \Big(|y|^{a}|\D u|^2+ \frac{a^2}{4}|y|^a\frac{u^2}{|y|^2}+a|y|^a\frac{|u|}{|y|}|\D u|  \Big) dz \\
&\le  c \int_{ B_R} |y|^{a}|\D u|^2 dz,
\end{align*}
and \eqref{eq:poincaré:trace} holds.

Finally, let's prove the Sobolev embedding \eqref{eq:embedding}. By using $a<0$, the classical Sobolev embedding $H^1(B_R) \hookrightarrow L^{2^*}( B_R)$ to the function $|y|^{a/2} u \in C_c^\infty(\overline{B_R}\setminus\Sigma_\e)$, the Hardy inequality \eqref{eq:hardy}, the Poincaré inequality \eqref{eq:poincaré}, combined with H\"older and Young inequalities, we obtain
\begin{align*}
&\Big(\int_{B_R}|y|^a|u|^{2^*}dz\Big)^{2/2^*}\le c \Big(\int_{B_R}\big(|y|^{a/2}|u|\big)^{2^*}dz\Big)^{2/2^*} \le c \int_{B_R} \Big( |y|^a u^2+|\D(|y|^{a/2}u)|^2\Big)dz\\
&\le   c \int_{B_R} \Big( |y|^a u^2+  |y|^{a}|\D u|^2+ \frac{a^2}{4}|y|^a\frac{u^2}{|y|^2}+a|y|^a\frac{|u|}{|y|}|\D u|  \Big) dz
\le c\int_{B_R}|y|^a|\D u|^2 dz.
\end{align*}
Hence, the proof is complete.
\end{proof}

\subsection{Weak solutions}
In this section we give the definition of weak solutions.

\begin{defi}\label{def:weak:sol}
Let $2\le n\le d$, $a+n \in (0, 2)$, $R>0$ and $0\le \e\ll 1$. Let $A$ be matrix satisfying \eqref{eq:unif:ell}, $f\in L^{2,a}(B_R\setminus\Sigma_{\e})$ and $F\in L^{2,a}(B_R\setminus\Sigma_{\e})^d$. We say that $u$ is a weak solution to
\begin{equation}\label{eq:weak:sol}
\begin{cases}
-\div(|y|^a A \nabla u)= |y|^a f+\div(|y|^aF), & \text{in }B_R\setminus \Sigma_{\e},\\
u=0, & \text{on } \partial\Sigma_\e\cap B_R,
\end{cases}
\end{equation}
if $ u\in \tilde H^{1,a}(B_R\setminus\Sigma_\e)$ and satisfies
\begin{equation}\label{eq:weak:sol:integral}
\int_{B_R}|y|^a A\D u\cdot \D\phi dz = \int_{B_R}|y|^a (f\phi-F\cdot\D\phi) dz,
\end{equation}
for every $\phi\in C_c^\infty(B_R\setminus\Sigma_\e).$

\smallskip

We say that $u$ is an entire solution to
\begin{equation*}
\begin{cases}
-\div(|y|^a A \nabla u)= |y|^a f+\div(|y|^aF), & \text{in }\R^d\setminus \Sigma_{\e},\\
u=0, & \text{on } \partial\Sigma_\e,
\end{cases}
\end{equation*}
if $u$ is a weak solution to \eqref{eq:weak:sol} for every $R>0$.
\end{defi}

\begin{oss}\label{rem:unicity}
Using the validity of the Poincaré inequality \eqref{eq:poincaré}, we have existence and uniqueness for solutions to \eqref{eq:weak:sol} which also satisfy a boundary condition on $\partial B_R\setminus\Sigma_{\e}$. In fact, if $u$ is a weak solution to \eqref{eq:weak:sol} satisfying $u-\bar u \in H^{1,a}_0(B_R\setminus\Sigma_\e) $, for some $\bar u\in \tilde H^{1,a}(B_R\setminus\Sigma_\e) $, then $u$ is a minimizer to the functional
\[
J(v):=\int_{B_R\setminus\Sigma_\e} |y|^a \Big(\frac{A\D v\cdot\D v}{2}-fv+F\cdot\D v\Big)dz,
\]
over  
\[X:=\{v\in H^{1,a}(B_R\setminus\Sigma_\e):v-\bar u\in H^{1,a}_0(B_R\setminus\Sigma_\e) \},\]
and $J$ is coercive. By a standard application of the Weierstrass Theorem, we have existence and uniqueness of solutions to \eqref{eq:weak:sol} with prescribed trace on $\partial B_R \setminus\Sigma_{\e}$.
\end{oss}

When $\e=0$, recalling the trace Theorem \ref{teo:NEK}, we also give a definition of weak solutions with prescribed trace on the lower dimensional boundary $\Sigma_0$.

\begin{defi}\label{def:weak:sol:trace:0}
Let $2\le n\le d$, $a+n\in(0,2)$, $R>0$ and $A$ satisfies \eqref{eq:unif:ell}. Let $f\in L^{2,a}(B_R)$, $F\in L^{2,a}(B_R)^d$ and $\psi\in  L^{2}( \Sigma_0 \cap B_R)$. We say that $u$ is a weak solution to
\begin{equation}\label{eq:weak:sol:trace:0}
\begin{cases}
-\div(|y|^a A \nabla u)= |y|^a f+\div(|y|^aF), & \text{in }B_R\setminus \Sigma_{0},\\
u=\psi, & \text{on } \Sigma_0\cap B_R,
\end{cases}
\end{equation}
if $u\in {H}^{1,a}(B_R)$, satisfies \eqref{eq:weak:sol:integral} for every $\phi\in C_c^\infty(B_R\setminus\Sigma_0)$ and $u=\psi$ on $\Sigma_0\cap B_R$, in the sense of the trace.
\end{defi}

\subsection{Local boundedness of solutions} 

The goal of this section is to prove $L^2\to L^\infty$ estimates for weak solutions to \eqref{eq:weak:sol}. The proof is fairly standard and employs an iterative technique based on a Caccioppoli-type inequality and the Sobolev embeddings \eqref{eq:embedding} (for example, see \cite{vasseur}). We include the proof for completeness. We start with the following Caccioppoli-type inequality.

\begin{lem}\label{lem:caccioppoli}
Let $2\le n\le d$, $a+n \in(0,2)$, $R>0$, $0\leq \e\ll1$, $p\ge(2^*)'$ and $q\ge 2$. Let $A$ be a matrix satisfying \eqref{eq:unif:ell}, $f\in L^{p,a}(B_R\setminus\Sigma_{\e})$, $F\in L^{q,a}(B_R\setminus\Sigma_{\e})^d$ and $u$ be a weak solution to \eqref{eq:weak:sol}.
Then, there exists $c>0$ depending only on $d$, $\lambda$ and $\Lambda$ such that for every $0<R_1<R_2<R$ there holds
\begin{align}\label{eq:caccioppoli}
\begin{split}
    \int_{B_{R_1}\setminus\Sigma_\e}|y|^a  |\D u|^2 dz \le c &\Big[\frac{1}{(R_2-R_1)^2}\int_{B_{R_2}\setminus\Sigma_\e}|y|^a  |u|^2 dz
    +
    \|f\|_{L^{p,a}(B_{B_{R_2}}\setminus\Sigma_\e)}
    \|u\|_{L^{p',a}(B_{B_{R_2}}\setminus\Sigma_\e)}\\
    &+ 
    \int_{B_{B_{R_2}}\setminus\Sigma_\e}|y|^a|F|^2\rchi_{\{|u|>0\}}dz
    \Big] .
\end{split}
\end{align}
Moreover, for every $b\in \R $ and for $v:=(u-b)_+=\max\{u-b,0\}$ or $v:=(u-b)_-=\max\{-u+b,0\}$ the same inequality \eqref{eq:caccioppoli} holds, with $u$ replaced by $v$.
\end{lem}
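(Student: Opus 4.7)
The plan is to test the weak formulation \eqref{eq:weak:sol:integral} with $\phi = \eta^2 u$, where $\eta \in C_c^\infty(B_R)$ is a standard radial cutoff satisfying $\eta \equiv 1$ on $B_{R_1}$, $\supp\eta \subset B_{R_2}$, $0 \le \eta \le 1$, and $|\nabla\eta|\le c/(R_2-R_1)$. Since $u \in \tilde H^{1,a}(B_R\setminus\Sigma_\e)$ vanishes on $\partial\Sigma_\e$ in the trace sense and $\eta$ is compactly supported in $B_R$, the product $\eta^2 u$ belongs to $H^{1,a}_0(B_R\setminus\Sigma_\e)$ and is therefore admissible by the density definition of the space given in Section \ref{section:sobolev:buco}.

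Expanding $\nabla(\eta^2 u) = \eta^2\nabla u + 2\eta u\,\nabla\eta$ and applying the ellipticity of $A$, the weak formulation produces
\[
\lambda\int_{B_R\setminus\Sigma_\e}|y|^a\eta^2|\nabla u|^2\,dz \le \int_{B_R\setminus\Sigma_\e}|y|^a\bigl(-2\eta u\,A\nabla u\cdot\nabla\eta + f\eta^2 u - 2\eta u\,F\cdot\nabla\eta - \eta^2 F\cdot\nabla u\bigr)dz.
\]
I would then absorb each term containing $\nabla u$ by Cauchy--Schwarz together with Young's inequality with a small parameter, consuming $\tfrac{\lambda}{4}\int|y|^a\eta^2|\nabla u|^2$ per term and leaving remainders of the form $c\int|y|^a u^2|\nabla\eta|^2$ or $c\int|y|^a|F|^2\rchi_{\{|u|>0\}}$. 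The indicator appears because $\nabla u = 0$ a.e.\ on $\{u=0\}$, so $\eta^2 F\cdot\nabla u$ is already supported in $\{|u|>0\}$; likewise $\eta u F\cdot\nabla\eta$ vanishes where $u=0$, so Young's inequality again produces $|F|^2\rchi_{\{|u|>0\}}$ rather than $|F|^2$. The source term is bounded by Hölder's inequality and the fact that $\eta\le 1$, giving $\int|y|^a f\eta^2 u\,dz \le \|f\|_{L^{p,a}(B_{R_2}\setminus\Sigma_\e)}\|u\|_{L^{p',a}(B_{R_2}\setminus\Sigma_\e)}$; the hypothesis $p \ge (2^*)'$ combined with \eqref{eq:embedding} guarantees that the right-hand side is finite. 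Recalling $\eta\equiv 1$ on $B_{R_1}$ and $|\nabla\eta|\le c/(R_2-R_1)$ then yields \eqref{eq:caccioppoli}.

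For the truncated case I would take $v=(u-b)_+$ (the case $v=(u-b)_-$ being symmetric), test with $\phi = \eta^2 v \in H^{1,a}_0(B_R\setminus\Sigma_\e)$, and exploit the a.e.\ identities $\nabla v = \nabla u\,\rchi_{\{u>b\}}$, $A\nabla u\cdot\nabla v = A\nabla v\cdot\nabla v$, and $v\,A\nabla u\cdot\nabla\eta = v\,A\nabla v\cdot\nabla\eta$, all of which follow from $v = 0$ on $\{u\le b\}$ and $\nabla u = \nabla v$ on $\{u>b\}$. With these identities the entire computation above carries over verbatim with $u$ replaced by $v$. The only real subtlety I foresee is the bookkeeping required to recover the correct indicator $\rchi_{\{|u|>0\}}$ in front of $|F|^2$: one has to restrict each $F$-integral to $\{|u|>0\}$ \emph{before} invoking Young, otherwise one only recovers a coarser bound with the full $|F|^2$ on the right, losing the sharp form of \eqref{eq:caccioppoli}. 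Everything else is a routine Caccioppoli-type manipulation combined with the standard density argument for the admissibility of the test function.
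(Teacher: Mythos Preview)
Your proposal is correct and follows essentially the same argument as the paper's own proof: test with $\eta^2 u$ (respectively $\eta^2 v$), expand $\nabla(\eta^2 u)$, apply ellipticity, and absorb the cross-terms via Cauchy--Schwarz and Young, while inserting $\rchi_{\{|u|>0\}}$ in the $F$-terms because both $\eta^2 F\cdot\nabla u$ and $\eta u\,F\cdot\nabla\eta$ vanish on $\{u=0\}$. Your treatment of the truncated case using $\nabla v=\nabla u\,\rchi_{\{u>b\}}$ likewise matches the paper exactly.
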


\begin{proof}

Fix $ 0  < R_1 < R_2<R$ and consider a smooth cut-off function $\eta\in C_c^\infty(B_R)$ such that
\begin{equation}\label{eq:eta:caccioppoli}
    \supp(\eta)\subset B_{R_2},\quad \eta=1\text{ on } B_{R_1},\quad 0\le \eta\le 1,\quad |\D\eta|\le \frac{c}{|R_2-R_1|},
\end{equation}
for some constant $c>0$ depending only on $d$. Let us test the equation satisfied by $u$ with $\eta^2 u$ (which is an admissible test function). Then, we obtain
\[
\int_{B_R }|y|^a \eta^2 A\D u\cdot \D u dz = \int_{B_R }|y|^a \big(
-2\eta u A\D u\cdot\D\eta+f\eta^2 u-\eta^2 F\cdot \D u -2\eta u F \cdot \D \eta 
\big)dz.
\]
By using \eqref{eq:unif:ell} and applying H\"older and Young inequalities, we obtain
\begin{align*}
    &\lambda \int_{B_R}|y|^a\eta^2|\D u|^2 dz  \le 
    2\Lambda
    \Big( \int_{B_R } |y|^a \eta^2 |\D u|^2 dz\Big)^{1/2}
    \Big( \int_{B_R }  |y|^a |u|^2 |\D \eta|^2 dz  \Big)^{1/2}\\
    &+
    \|\eta f\|_{L^{p,a}(B_{R}\setminus\Sigma_\e)}
    \|\eta u\|_{L^{p',a}(B_{R}\setminus\Sigma_\e)}
    +
    \Big( \int_{B_R } |y|^a \eta^2 |F|^2 \rchi_{\{|u|>0\}} dz  \Big)^{1/2}
    \Big( \int_{B_R }  |y|^a \eta^2 |\D u|^2  dz \Big)^{1/2}\\
    &+2
    \Big( \int_{B_R } |y|^a \eta^2 |F|^2 \rchi_{\{|u|>0\}} dz  \Big)^{1/2}
    \Big( \int_{B_R }  |y|^a |u|^2 |\D\eta|^2 dz  \Big)^{1/2}\\
    &\le 
    \frac{5}{6}\lambda \int_{B_R }|y|^a\eta^2|\D u|^2 dz+ \frac{3\Lambda^2}{\lambda} \int_{B_R }  |y|^a |u|^2 |\D \eta|^2 dz +
    \|\eta f\|_{L^{p,a}(B_{R}\setminus\Sigma_\e)}
    \| \eta u\|_{L^{p',a}(B_{R}\setminus\Sigma_\e)}\\
    &+ \Big(1+\frac{1}{2\lambda}\Big)\int_{B_R } |y|^a \eta^2|F|^2 \rchi_{\{|u|>0\}} dz + \int_{B_R }  |y|^a |u|^2 |\D\eta|^2dz.
\end{align*}
Hence, we have 
\begin{align*}
    \frac{\lambda}{6} \int_{B_{R_1} }|y|^a &|\D u|^2 dz \le \Big(1+\frac{3\Lambda^2}{\lambda}\Big) \frac{c}{{(R_2-R_1)}^2}\int_{B_{R_2} }  |y|^a |u|^2 dz \\
    &+ \|  f\|_{L^{p,a}(B_{R_2}\setminus\Sigma_\e)}
    \|  u\|_{L^{p',a}(B_{R_2}\setminus\Sigma_\e)} +  \Big(1+\frac{1}{2\lambda}\Big)\int_{B_{R_2}} |y|^a |F|^2 \rchi_{\{|u|>0\}}dz,
\end{align*}
that is, \eqref{eq:caccioppoli} holds true.

\smallskip

Finally, to prove that \eqref{eq:caccioppoli} holds true for $v:=(u-b)_+$ it is enough to choose $\eta^2 v$ as test function in \eqref{eq:weak:sol}, noting that $\D v= \D u$ on the set $\{v>0\}$ and performing similar computations as above. The case $w=(u-b)_-$ is analogous, observing that $\D w= -\D u$ on the set $\{w>0\}$.
\end{proof}

The next lemma is to establish a no-spike estimate type.

\begin{lem}\label{L:no:spike}
Let $2\le n\le d$, $a+n \in(0, 2)$, $0<r<R$, $0\leq \e\ll1$, $p>{d}/{2}$, $q>d$. Let $A$ be a matrix satisfying \eqref{eq:unif:ell}, $f\in L^{p,a}(B_R\setminus\Sigma_{\e})$, $F\in L^{q,a}(B_R\setminus\Sigma_{\e})^d$ satisfying
\[
 \|f\|_{L^{p,a}(B_{R}\setminus\Sigma_\e)}+
      \|F\|_{L^{q,a}(B_{R}\setminus\Sigma_\e)^d}\le 1.
\]
Then, there exists a constant $\delta\in(0,1)$, depending only on $d,$ $n,$ $a,$ $\lambda,$ $\Lambda,$ $p,$ $q,$ $r$ and $R$, such that if $u$ is a weak solution to \eqref{eq:weak:sol} and it satisfies
\[
\int_{B_R\setminus\Sigma_\e}|y|^a|u_+|^2dz\le \delta,
\]
then
\[
u\le1 \quad \text{ a.e. in } B_r\setminus\Sigma_\e.
\]
Conversely, if
 \[
\int_{B_R\setminus\Sigma_\e}|y|^a|u_-|^2dz\le \delta,
\]
then
\[
u\ge -1 \quad \text{ a.e. in } B_r\setminus\Sigma_\e.
\]
\end{lem}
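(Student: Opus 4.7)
\textbf{Proof plan for Lemma \ref{L:no:spike}.}

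The plan is to run a De Giorgi iteration, where the Caccioppoli inequality from Lemma \ref{lem:caccioppoli} and the weighted Sobolev embedding \eqref{eq:embedding} from Proposition \ref{P:basic} provide the engine, and the smallness of the assumed $L^{2,a}$-norm of $u_+$ guarantees that the iteration converges to zero. We treat only the upper bound; the lower bound follows by applying the result to $-u$, which solves the same equation with $f$ and $F$ replaced by their opposites, both of whose norms are still bounded by $1$.

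Fix the dyadic sequence of radii $R_k := r + (R-r)/2^k$ and levels $b_k := 1 - 1/2^k$, and set $v_k := (u-b_k)_+$. Using cut-off functions $\eta_k \in C_c^\infty(B_R\setminus\Sigma_\e)$ with $\eta_k = 1$ on $B_{R_{k+1}}$, $\supp \eta_k \subset B_{R_k}$, and $|\nabla \eta_k|\lesssim 2^k/(R-r)$, the Caccioppoli-type inequality from Lemma \ref{lem:caccioppoli} applied to $v_k$ together with the embedding \eqref{eq:embedding} applied to $\eta_k v_k$ yields an $L^{2^*,a}$-bound for $v_k$ on $B_{R_{k+1}}\setminus\Sigma_\e$ in terms of the three quantities: $\frac{4^k}{(R-r)^2}\int_{B_{R_k}}|y|^a v_k^2$, $\|f\|_{L^{p,a}}\|v_k\|_{L^{p',a}(A_k)}$, and $\|F\|_{L^{q,a}}^2 \,\mu_a(A_k)^{1-2/q}$, where $A_k := \{v_k > 0\}\cap B_{R_k}\setminus\Sigma_\e$ and $\mu_a$ denotes the weighted measure $|y|^a\,dz$.

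The key observation that drives the iteration is the layer-cake bound: on $A_{k+1}$ one has $v_k \ge 2^{-(k+1)}$, so $\mu_a(A_{k+1})\le 4^{k+1}\, U_k$, where $U_k:=\int_{B_{R_k}\setminus\Sigma_\e}|y|^a v_k^2\,dz$. Combining this with a H\"older interpolation between $L^{2^*,a}$ and $\mu_a$—which works because the assumption $p>d/2$ forces $p\ge (2^*)'=2d/(d+2)$ (in dimension $d=2$ we simply pick any large exponent in \eqref{eq:embedding}) and $q > d \ge 2$ makes the $F$-term subcritical—one obtains a recursion of the form
\begin{equation*}
U_{k+1}\le C\,4^{k}\bigl(U_k^{\,1+\sigma_1} + U_k^{\,1+\sigma_2}+U_k^{\,1+\sigma_3}\bigr),
\end{equation*}
with three strictly positive exponents $\sigma_i$ coming from the Sobolev gain and the subcriticality margins $p>d/2$, $q>d$, and where $C$ depends only on the listed parameters (crucially, \emph{not} on $\e$, since Proposition \ref{P:basic} and Lemma \ref{lem:caccioppoli} are uniform in $\e$). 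The standard nonlinear iteration lemma of De Giorgi then provides a threshold $\delta = \delta(d,n,a,\lambda,\Lambda,p,q,r,R)>0$ such that $U_0 \le \delta$ forces $U_k\to 0$, whence $(u-1)_+ = 0$ a.e.\ in $B_r\setminus\Sigma_\e$.

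The main technical obstacle is not conceptual but bookkeeping: one must verify that every constant produced by Caccioppoli, Sobolev, and the three H\"older interpolations is independent of $\e$, and that the exponents $\sigma_i$ are strictly positive under the stated sharp conditions $p>d/2$, $q>d$. The uniformity in $\e$ is precisely what Proposition \ref{P:basic} was designed to guarantee, and the strict exponent conditions are built into the hypotheses, so both issues are handled by the setup already in place.
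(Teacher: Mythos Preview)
Your proof plan is correct and follows essentially the same De Giorgi iteration as the paper: the same dyadic radii $r+(R-r)2^{-k}$ and levels $1-2^{-k}$, the same combination of the Caccioppoli inequality \eqref{eq:caccioppoli} with the Sobolev embedding \eqref{eq:embedding}, the same layer-cake estimate $\mu_a(\{v_{k+1}>0\})\le 4^{k+1}U_k$, and the same nonlinear recursion driven by the positive exponents arising from $p>d/2$ and $q>d$. Your explicit use of cut-offs $\eta_k$ to apply \eqref{eq:embedding} is slightly more careful than the paper's presentation, and your emphasis on $\e$-uniformity is exactly the point.
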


\begin{proof}
Let us assume that $d\ge3$ (the case $d=2$ is similar and relies only on the Sobolev-type inequality \eqref{eq:embedding}) and consider the positive part $u_+$ (the case $u_-$ is exactly the same).

For every $j\in\N$, set
\[
C_j:=1-2^{-j},\quad r_j:=(R-r)2^{-j}+r,\quad D_j:=B_{r_j}\setminus\Sigma_\e,
\]
noting that $C_0=0$, $r_0=R$, $C_j\uparrow 1$, $r_j\downarrow r$, $D_j\supset D_{j+1}$ and $r_j-r_{j+1}=(R-r)2^{-(j+2)} $. We define
\[
V_j:=(u-C_j)_+, \quad E_j:=\int_{D_j}|y|^a V_j^2dz,
\]
which satisfy, for every $j\in\N$, $E_{j+1}\le E_j\le E_0\le \delta$ by assumption. 

Applying the Sobolev inequality \eqref{eq:embedding} and the Caccioppoli-type inequality \eqref{eq:caccioppoli} to $V_{j+1}$, with $R_1=r_{j+1}$, $R_2=r_j$, thanks to the assumption $\|f\|_{L^{p,a}(B_R\setminus\Sigma_\e)}\le 1$, we get
\begin{align}\label{eq:no:spike:1}
\begin{split}
    &\Big(\int_{D_{j+1}}|y|^a |V_{j+1}|^{2^*} dz\Big)^{{2}/{2^*}} \le c   
    \int_{D_{j+1}}|y|^a |V_{j+1}|^{2} dz   \\
  & \le c\Big[
2^{2j}\int_{D_j}|y|^a |V_{j+1}|^2dz+\|V_{j+1}\|_{L^{p',a}(D_{j})}
+
\int_{D_j}|y|^a |F|^2\rchi_{\{V_{j+1}>0\}}dz
\Big].
\end{split}
\end{align}
Next, by using  H\"older inequality in the definition of $E_{j+1}$ it follows that
\begin{equation}\label{eq:no:spike:2}
        E_{j+1}=\int_{D_{j+1}}|y|^a V_{j+1}^2dz\le \Big(
        \int_{D_{j+1}}|y|^a |V_{j+1}|^{2^*} dz
        \Big)^{{2}/{2^*}}
        \Big(
        \int_{D_{j+1}}|y|^a \rchi_{\{V_{j+1}>0\}} dz
        \Big)^{(2^*-2)/{2^*}}.
\end{equation}
Again, we use H\"older inequality, noting that \( V_{j+1} \leq V_j \), to estimate 
\begin{align}\label{eq:no:spike:3}
\begin{split}
    \|V_{j+1}\|_{L^{p',a}(D_{j})}&\le  \Big(
    \int_{D_j}|y|^a V_{j+1}^2dz
    \Big) ^{1/2}
    \Big(
    \int_{D_j}|y|^a \rchi_{\{V_{j+1}>0\}}dz
    \Big)^{(p-2)/{2p}}\\
    &\le  E_j^{1/2} \Big(
    \int_{D_j}|y|^a \rchi_{\{V_{j+1}>0\}}dz
    \Big)^{(p-2)/{2p}},
    \end{split}
\end{align}
and, by also using $\|F\|_{L^{q,a}(B_R\setminus\Sigma_\e)^d}\le 1$, we obtain
\begin{align}\label{eq:no:spike:4}
\begin{split}
   \int_{D_j}|y|^a |F|^2\rchi_{\{V_{j+1}>0\}}dz&\le 
   \Big(
    \int_{D_j}|y|^a |F|^qdz
    \Big) ^{{2}/{q}}
    \Big(
    \int_{D_j}|y|^a \rchi_{\{V_{j+1}>0\}}dz
    \Big)^{{(q-2})/{q}}\\
    &\le \Big(
    \int_{D_j}|y|^a \rchi_{\{V_{j+1}>0\}}dz
    \Big)^{{q-2}/{q}}.
    \end{split}
\end{align}
Moreover, one has that
\[
\{V_{j+1}>0\}=\{u-C_{j+1}>0\}=\{u-C_j>2^{-(j+1)}\}=\{V_j>2^{-(j+1)}\},
\]
hence,
\begin{equation}\label{eq:no:spike:5}
   \int_{D_j}|y|^a \rchi_{\{V_{j+1}>0\}}dz = \int_{D_j}|y|^a \rchi_{\{V_{j}^2>2^{-2(j+1)}\}} dz\le 2^{2(j+1)}\int_{D_j}|y|^a V_j^2 dz = 2^{2(j+1)} E_j. 
\end{equation}
Putting together \eqref{eq:no:spike:1}, \eqref{eq:no:spike:2}, \eqref{eq:no:spike:3}, \eqref{eq:no:spike:4} and \eqref{eq:no:spike:5} we obtain
\[
E_{j+1}\le C^{j+1}(E_j+E_j^{1-\frac{1}{p}}+E_j^{1-\frac{2}{q}} )E_j^\frac{2^*-2}{2^*},
\]
where $\frac{2^*-2}{2^*}-\frac{1}{p}>0$ and $\frac{2^*-2}{2^*}-\frac{2}{p}>0$ by the assumptions $p>d/2$ and $q>d$. By denoting with $\bar{\gamma}>0$ the minimum of these two numbers we have that 
\[
\begin{cases}
    E_{j+1}\le C^{j+1}E_j^{1+\bar \gamma},\\
    E_0\le \delta,
\end{cases}
\]
which implies
\[
E_j\le C^{\sum_{i=0}^ji(1+\gamma)^{j-i}} E_0^{(1+\gamma)^j}\le (C\delta)^{(1+\gamma)^j}.
\]
Finally, by choosing $\delta$ such that $C\delta<1$, and taking the limit as $j\to\infty$ we obtain that $E_j\to0 $, that is, $\int_{B_r\setminus\Sigma_\e}|y|^a (u-1)^2_+=0$, which yields $u\le 1$ a.e. in $B_r\setminus\Sigma_\e$.
\end{proof}

Finally, the next lemma states the $L^\infty_\loc$ boundedness of weak solutions to \eqref{eq:weak:sol}.

\begin{lem}\label{lem:moser}
Let $2\le n\le d$, $a+n \in(0, 2)$, $R>0$, $0\leq \e\ll1$, $p>{d}/{2}$, $q>d$. Let $A$ be a matrix satisfying \eqref{eq:unif:ell}, $f\in L^{p,a}(B_R\setminus\Sigma_{\e})$, $F\in L^{q,a}(B_R\setminus\Sigma_{\e})^d$ and let $u$ be a weak solution to \eqref{eq:weak:sol}. Then, for every $r\in(0,R)$, there exists $c > 0$ depending only on $d$, $n$, $a$, $\lambda$, $\Lambda$, $p$, $q$ and $r$ such that
\begin{equation}\label{eq:moser}
\|u\|_{L^\infty(B_{r}\setminus \Sigma_\e)}\le c\big(
\|u\|_{L^{2,a}(B_{R}\setminus \Sigma_\e)}
+\|f\|_{L^{p,a}(B_{R}\setminus \Sigma_\e)}
+\|F\|_{L^{q,a}(B_{R}\setminus \Sigma_\e)^d}
\big).
\end{equation}
\end{lem}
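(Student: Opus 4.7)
The plan is to reduce the estimate to a direct application of the no-spike Lemma \ref{L:no:spike} via a normalization argument, using the linearity of the equation. This is the standard way to upgrade a $L^2$-smallness threshold into an $L^\infty$ bound for linear equations.

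Fix $r\in(0,R)$ and set
\[
K:=\|u\|_{L^{2,a}(B_{R}\setminus\Sigma_\e)}+\|f\|_{L^{p,a}(B_{R}\setminus\Sigma_\e)}+\|F\|_{L^{q,a}(B_{R}\setminus\Sigma_\e)^d}.
\]
If $K=0$, then $u\equiv 0$ and \eqref{eq:moser} is trivial. Assume $K>0$. Let $\delta\in(0,1)$ be the constant from Lemma \ref{L:no:spike} corresponding to the given $r$, $R$, $d$, $n$, $a$, $\lambda$, $\Lambda$, $p$, $q$, and define
\[
M:=\frac{K}{\sqrt{\delta}},\qquad v:=\frac{u}{M}.
\]
By linearity, $v\in\tilde H^{1,a}(B_R\setminus\Sigma_\e)$ is a weak solution to
\[
\begin{cases}
-\div(|y|^a A\nabla v)=|y|^a (f/M)+\div(|y|^a (F/M)), & \text{in }B_R\setminus\Sigma_\e,\\
v=0, & \text{on }\partial\Sigma_\e\cap B_R,
\end{cases}
\]
with $\|f/M\|_{L^{p,a}(B_R\setminus\Sigma_\e)}+\|F/M\|_{L^{q,a}(B_R\setminus\Sigma_\e)^d}\le K/M=\sqrt{\delta}\le 1$, so the hypotheses of Lemma \ref{L:no:spike} on the data are met.

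Next, the choice of $M$ ensures
\[
\int_{B_R\setminus\Sigma_\e}|y|^a v_+^2\,dz\le \int_{B_R\setminus\Sigma_\e}|y|^a v^2\,dz\le \frac{K^2}{M^2}=\delta,
\]
so Lemma \ref{L:no:spike} applied to $v$ gives $v\le 1$ a.e.\ on $B_r\setminus\Sigma_\e$, that is, $u\le M$ there. Applying the same argument to $-v$ (note that $-u$ solves the equation with $f$, $F$ replaced by $-f$, $-F$, which satisfy the same norm bounds), we obtain $-u\le M$ on $B_r\setminus\Sigma_\e$. Hence
\[
\|u\|_{L^\infty(B_r\setminus\Sigma_\e)}\le M=\frac{1}{\sqrt{\delta}}\,K,
\]
which is precisely \eqref{eq:moser} with $c=1/\sqrt{\delta}$, a constant depending only on the claimed parameters.

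There is no real obstacle: once Lemma \ref{L:no:spike} is in hand, the argument is a one-parameter rescaling and the only care needed is to check that the chosen $M$ simultaneously normalizes both the $L^{2,a}$ energy of $u$ and the source norms, which it does by construction.
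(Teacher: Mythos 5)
Your proof is correct and essentially identical to the paper's: the paper defines $\theta=\sqrt{\delta}/K$ (your $1/M$) and applies Lemma \ref{L:no:spike} to $v=\theta u$, obtaining the bound with constant $2/\sqrt{\delta}$ (or $1/\sqrt\delta$ as in your cleaner bookkeeping). The only cosmetic difference is that you invoke the lemma for $-v$ while the paper uses the built-in $u_-$ clause of Lemma \ref{L:no:spike}; these are the same argument.
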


\begin{proof}
   Let us define
   \[
   v:=\theta u,\quad\theta:=\frac{\sqrt{\delta}}{
   \|u\|_{L^{2,a}(B_R\setminus\Sigma_\e)}+
   \|f\|_{L^{p,a}(B_R\setminus\Sigma_\e)}+
   \|F\|_{L^{q,a}(B_R\setminus\Sigma_\e)^d}},
   \]
   where $\delta>0$ is as in Lemma \ref{L:no:spike}.
   One has that $v$ satisfies the hypothesis of Lemma \ref{L:no:spike}, hence, $v_+\le 1$ in $B_r\setminus\Sigma_\e$, which implies
   \[
   \|u_+\|_{L^\infty(B_r\setminus\Sigma_\e)}\le \frac{1}{\sqrt \delta}(\|u\|_{L^{2,a}(B_R\setminus\Sigma_\e)}+
   \|f\|_{L^{p,a}(B_R\setminus\Sigma_\e)}+
   \|F\|_{L^{q,a}(B_R\setminus\Sigma_\e)^d}).
   \]
Repeating the same argument with $v_-$ one has that
   \[
   \|u_-\|_{L^\infty(B_r\setminus\Sigma_\e)}\le \frac{1}{\sqrt \delta}(\|u\|_{L^{2,a}(B_R\setminus\Sigma_\e)}+
   \|f\|_{L^{p,a}(B_R\setminus\Sigma_\e)}+
   \|F\|_{L^{q,a}(B_R\setminus\Sigma_\e)^d}).
   \]
Hence, we have that \eqref{eq:moser} holds true, by choosing $C=2/\sqrt \delta$. The proof is complete.
\end{proof}

\subsection{Approximation result}\label{s:approximation}

In the spirit of \cite{SirTerVit21a}*{Lemma 2.12, Lemma 2.15} and \cite{AFV24}*{Lemma 4.2}, the goal of this section is to provide an approximation result, which allows us to construct a family of solutions to \eqref{eq:weak:sol} in perforated domains ($0<\e\ll 1$), which converges in a suitable sense to weak solutions of \eqref{eq:weak:sol}, when $\e=0$.

\begin{lem}\label{lem:approximation}

Let $2\le n\le d$, $a+n\in(0,2)$, $R>0$. Let $A$ be a matrix satisfying \eqref{eq:unif:ell}, $f\in L^{2,a}(B_R)$, $F\in L^{2,a}(B_R)^d$ and let $u$ be a weak solution to \eqref{eq:weak:sol} with $\e=0$.

Then, for every $r\in(0,R)$, there exists a family $\{u_\e\}_{0<\e\ll 1}$, such that $u_\e$ are weak solutions to
\begin{equation}\label{eq:lemm:approximation:e}
\begin{cases}
-\div(|y|^a A \nabla u_\e)= |y|^a f+\div(|y|^aF), & \text{in }B_{r}\setminus  \Sigma_{\e}, \\
u=0, & \text{on } \partial \Sigma_\e \cap B_{r},
\end{cases}
\end{equation}
satisfying
\begin{align}\label{eq:stima:rhs}
\begin{aligned}
&\|u_\e\|_{H^{1,a}(B_{r}\setminus\Sigma_{\e})}\le c \big(\|u\|_{H^{1,a}(B_R)}+\|f\|_{L^{2,a}(B_R)}
+\|F\|_{L^{2,a}(B_R)^d}\big)
,
\end{aligned}
\end{align}
for some constant $c>0$ depending only on $d$, $n$, $a$, $\lambda$, $\Lambda$, $R$, $r$ and, up to consider the trivial extension of $u_{\e}$ in the whole $B_r$ (see Remark \ref{rem:inclusion}), there exists a sequence $\e_k\to0$ such that
\begin{equation}\label{eq:convergence:approximation}
 u_{\e_k}\to u \text{ in } H^{1,a}(B_{r}).
\end{equation}

\end{lem}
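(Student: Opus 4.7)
The plan is to construct the family $\{u_\e\}$ by solving the perforated problem with boundary data on $\partial B_r$ obtained by truncating $u$ near $\Sigma_\e$, then establish the uniform estimate and the strong convergence via two energy arguments, in the spirit of \cite{SirTerVit21a}*{Lemma 2.12} and \cite{AFV24}*{Lemma 4.2}.

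First, I would build a radial (in $y$) logarithmic cut-off. Fix a smooth nondecreasing $\zeta:\R\to[0,1]$ with $\zeta\equiv 0$ on $(-\infty,0]$ and $\zeta\equiv 1$ on $[1,\infty)$, and set
\[
\eta_\e(z):=\zeta\!\left(\frac{\log(|y|/\e)}{\log 2}\right),
\]
so that $\eta_\e=0$ for $|y|\le\e$, $\eta_\e=1$ for $|y|\ge 2\e$, and $|\D\eta_\e(z)|\le C/|y|$ with support in $\{\e<|y|<2\e\}$. Define $\phi_\e:=\eta_\e u$, which belongs to $\tilde H^{1,a}(B_R\setminus\Sigma_\e)$ since it vanishes in a neighborhood of $\Sigma_\e$. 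The key claim is $\phi_\e\to u$ strongly in $H^{1,a}(B_R)$: the $L^{2,a}$-bound is immediate from dominated convergence, while for the gradient $\D(\phi_\e-u)=(\eta_\e-1)\D u+u\,\D\eta_\e$, the first summand vanishes in $L^{2,a}$ by dominated convergence and the second is controlled by
\[
\int_{B_R}|y|^a u^2|\D\eta_\e|^2\,dz\le C\int_{\{\e<|y|<2\e\}}|y|^a\frac{u^2}{|y|^2}\,dz\longrightarrow 0,
\]
the limit being justified by dominated convergence once the Hardy inequality \eqref{eq:hardy} ensures $|y|^a u^2/|y|^2\in L^1(B_R)$.

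Next, for each small $\e$ I would define $u_\e$ as the unique weak solution to \eqref{eq:lemm:approximation:e} in $B_r\setminus\Sigma_\e$ subject to the additional Dirichlet condition $u_\e-\phi_\e\in H^{1,a}_0(B_r\setminus\Sigma_\e)$; existence and uniqueness are provided by Remark \ref{rem:unicity}, since $\phi_\e|_{B_r}\in\tilde H^{1,a}(B_r\setminus\Sigma_\e)$ has zero trace on $\partial\Sigma_\e\cap B_r$. Testing the equation with $w_\e:=u_\e-\phi_\e\in H^{1,a}_0(B_r\setminus\Sigma_\e)$, using \eqref{eq:unif:ell}, Cauchy--Schwarz and Young's inequalities to absorb the gradient of $u_\e$, and applying the Poincaré inequality \eqref{eq:poincaré}, I obtain a bound
\[
\|u_\e\|_{H^{1,a}(B_r\setminus\Sigma_\e)}\le C\big(\|\phi_\e\|_{H^{1,a}(B_r\setminus\Sigma_\e)}+\|f\|_{L^{2,a}(B_R)}+\|F\|_{L^{2,a}(B_R)^d}\big),
\]
which combined with the previous step yields \eqref{eq:stima:rhs} with a constant independent of $\e$.

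Finally, I would establish the strong convergence directly, bypassing the usual compactness-plus-uniqueness scheme. Extending $u_\e$ by zero to $B_r$ via Remark \ref{rem:inclusion}, and extending $w_\e$ by zero to $B_R$ (which places it in $H^{1,a}_0(B_R\setminus\Sigma_0)$ and hence makes it an admissible test function in the weak formulation for $u$), subtracting the two weak formulations gives
\[
\int_{B_r}|y|^a A\,\D(u_\e-u)\cdot\D w_\e\,dz=0.
\]
Rewriting $w_\e=(u_\e-u)+(u-\phi_\e)$ and invoking ellipticity and Cauchy--Schwarz yields
\[
\lambda\|\D(u_\e-u)\|_{L^{2,a}(B_r)}\le\Lambda\|\D(\phi_\e-u)\|_{L^{2,a}(B_r)}\longrightarrow 0,
\]
and the Poincaré inequality then upgrades this to $u_\e\to u$ in $H^{1,a}(B_r)$, as required by \eqref{eq:convergence:approximation} (in fact without extracting a subsequence). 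The hardest step is the convergence $\phi_\e\to u$: a Lipschitz cut-off of width $\e$ would produce $\int|y|^a|\D\eta_\e|^2\,dz\sim\e^{a+n-2}\to\infty$ in the regime $a+n<2$, so the logarithmic profile is essential, and the $u^2/|y|^2$ integrand it generates is tamed only thanks to the Hardy inequality \eqref{eq:hardy}.
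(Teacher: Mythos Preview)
Your proof is correct and takes a genuinely different route from the paper's. The paper multiplies $u$ by a smooth cut-off $\xi$ in the \emph{radial} variable $|z|$ (supported in $B_{(R+r)/2}$, identically $1$ on $B_r$), derives an equation for $\tilde u=\xi u$ with modified data $\tilde f,\tilde F$, and then for each $\e>0$ solves the perforated problem in $B_R\setminus\Sigma_\e$ with \emph{zero} Dirichlet data on both $\partial B_R$ and $\partial\Sigma_\e$. The convergence is obtained indirectly: uniform energy bounds yield a weakly convergent subsequence $u_{\e_k}\rightharpoonup\bar u$ in $H^{1,a}(B_R)$, the limit is identified with $\tilde u$ via uniqueness (Remark~\ref{rem:unicity}), and strong convergence follows by showing convergence of the energies.

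Your approach instead truncates $u$ near $\Sigma_\e$ via a logarithmic cut-off in $|y|$, solves the perforated problem in $B_r\setminus\Sigma_\e$ with boundary datum $\phi_\e$ on $\partial B_r$, and then obtains strong convergence directly from the single energy identity $\int_{B_r}|y|^a A\,\nabla(u_\e-u)\cdot\nabla w_\e=0$. This buys you convergence of the \emph{full} family rather than along a subsequence, and bypasses both compactness and the uniqueness argument. The price is that the cut-off in $|y|$ must be logarithmic and its contribution is controlled only through the Hardy inequality~\eqref{eq:hardy}; the paper's cut-off in $|z|$ has bounded gradient and needs no such tool. Both arguments are clean; yours is arguably more economical.
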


\begin{proof}
Let us fix $r\in (0,R)$ and consider a cut-off function $\xi\in C_c^\infty(B_{R})$ such that 
\[\xi=1 \text{ in } B_{r},\quad \supp(\xi)\subset B_{\frac{R+r}{2}},\quad 0\le\xi\le1,\quad|\D\xi|\le c_0,\]
for some $c_0>0$ depending only on $d$, $R$ and $r$, and define $\tilde{u}=\xi u \in H^{1,a}_0(B_R) $.

Fixed $\phi\in C_c^\infty(B_R\setminus\Sigma_0)$, and using the equation \eqref{eq:weak:sol:integral} satisfied by $u$, we get
\begin{align*}
&\int_{B_R}|y|^a   A\D \tilde{u}\cdot\D\phi  dz    = \int_{B_R}|y|^a  \Big( \xi A\D {u} \cdot\D\phi   u   A\D \xi \cdot\D\phi   \Big)    dz   \\
&= 
\int_{B_R}|y|^a  \Big( A\D {u} \cdot \D(\phi\xi) - 
 \phi A\D {u} \cdot \D \xi +
 u   A\D \xi \cdot\D\phi \Big)   dz   \\
&= 
\int_{B_R}|y|^a\Big( 
f\phi\xi - F\cdot \D (\phi\xi)
- 
 \phi A\D {u} \cdot \D \xi +
 u   A\D \xi \cdot\D\phi \Big)   dz   \\
 &= 
 \int_{B_R}|y|^a \Big( 
 f\phi\xi
 - \xi F\cdot \D \phi
 - \phi F\cdot \D \xi
 - 
 \phi A\D {u} \cdot \D \xi +
 u   A\D \xi \cdot\D\phi \Big)   dz   ,
\end{align*}
that is, $\tilde{u}$ is a weak solution to 
\begin{equation}\label{eq:cut:off:approximation}
\begin{cases}
-\div(|y|^a A \D\tilde{u})=|y|^a \tilde{f}+ \div(|y|^a  \tilde{F}),& \text{ in }B_R\setminus\Sigma_0,\\
u=0, &  \text{ on } \partial B_R\setminus\Sigma_0,\\
u=0, &  \text{ on } \Sigma_0 \cap B_R,
\end{cases}
\end{equation}
where we have set
\[
\tilde{f}= f\xi-F\cdot\D\xi-A\D u\cdot \D\xi,
\qquad
\tilde{F}= F\xi-uA\D\xi.
\]
We estimate the right hand side in \eqref{eq:cut:off:approximation} in the following way
\begin{align}\label{eq:rhs:f:cutoff}
\begin{aligned}
&\int_{B_R }|y|^a \tilde{f}^2 dz \le 
2 \int_{B_R }|y|^a
\Big(
(\xi f)^2+(F\cdot\D\xi)^2+(A\D u\cdot\D\xi)^2
\Big) dz  \\
&\le 2 \int_{B_R }|y|^a
\Big(
f^2+c_0^2|F|^2+|A\D\xi|^2 |\D u|^2
\Big)   dz\\
&\le c
\int_{B_R }|y|^a \Big(f^2 + |F|^2   + |\D u|^2   \Big)dz 
,
\end{aligned}
\end{align}
for some $c>0$ depending only on $d$, $\Lambda$, $R$ and $r$. By performing similar computations, we get 
\begin{align}\label{eq:rhs:F:cutoff}
\begin{aligned}
&\int_{B_R }|y|^a |\tilde{F}|^2 dz
&\le c
\int_{B_R}|y|^a \big(|F|^2  
+ u^2  
\Big)dz,
\end{aligned}
\end{align}
for some $c>0$ depending only on $d$, $\Lambda$, $R$ and $r$

\medskip

Fixed $0<\e_0\ll 1$, for every $0<\e<\e_0$, recalling Remark \ref{rem:unicity}, let $u_\e$ be the unique weak solution to 
\begin{equation}\label{eq:approximation:u:e}
\begin{cases}
-\div(|y|^a A \nabla u_\e)= |y|^a \tilde{f}+\div(|y|^a \tilde{F}), & \text{in }B_R\setminus \Sigma_{\e},\\
u_\e=0, & \text{on }\partial B_R \setminus \Sigma_\e,\\
u_\e=0, & \text{on } \Sigma_\e\cap B_R.
\end{cases}
\end{equation}
By using $u_\e \in H^{1,a}_0(B_R\setminus\Sigma_\e)$ as test function in \eqref{eq:approximation:u:e}, up to consider the trivial extension in the whole $B_r$ (see Remark \ref{rem:inclusion}), combined with \eqref{eq:unif:ell}, Poincaré inequality \eqref{eq:poincaré} and H\"older inequality, we get
\begin{align*}
&\lambda \int_{B_R}   |y|^a |\D u_\e|^2 dz\le 
\int_{B_R}   |y|^a A \D  u_\e  \cdot \D u_\e dz = 
\int_{B_R}   |y|^a \Big(\tilde{f} u_\e +\tilde F \cdot \D u_\e\Big) dz \\
&\le\Big(\int_{B_R}   |y|^a |\tilde f|^2 dz\Big)^{1/2}
\Big(\int_{B_R}   |y|^a |u_\e|^2 dz\Big)^{1/2}+
\Big(\int_{B_R}   |y|^a |\tilde F|^2 dz\Big)^{1/2}
\Big(\int_{B_R}   |y|^a |\D u_\e|^2 dz\Big)^{1/2}\\
&\le c \Big(\int_{B_R}   |y|^a |\D u_\e|^2 dz\Big)^{1/2} \Big(
\|\tilde{f}\|_{L^{2,a}({B_R})}+ \|\tilde{F}\|_{L^{2,a}({B_R})^d}
 \Big),
\end{align*}
and then, by using \eqref{eq:rhs:f:cutoff} and \eqref{eq:rhs:F:cutoff}, we have that there exists a constant $c>0$ depending only on $d$, $n$, $a$, $\lambda$ and $\Lambda$ such that
\begin{align}\label{eq:ssss:stima}
\begin{aligned}
\|u_\e\|_{H^{1,a}(B_R\setminus\Sigma_\e)}\le c \big(\|f\|_{L^{2,a}(B_R)}
+\|F\|_{L^{2,a}(B_R)^d}
+\|u\|_{H^{1,a}( B_R)}\big).
\end{aligned}
\end{align}
So, we get that $\{u_\e\}\subset H^{1,a}_0(B_R\setminus\Sigma_\e) \subset H^{1,a}_0(B_R)$ is uniformly bounded. Hence, there exists  $\bar{u} \in H^{1,a}_0(B_R)$ and a sequence $\e_k\to0$, such that
\begin{equation}\label{eq:weak:D:u:eps}
u_{\e_k}\rightharpoonup \bar{u},\quad \text{weakly in } H^{1,a}(B_R).
\end{equation}

\medskip

Next, we prove that $\bar{u}=\tilde{u}$.  
Let $\phi\in C_c^\infty(B_R\setminus\Sigma_0)$ be a test function in the equation \eqref{eq:approximation:u:e} satisfied by $u_\e$. Then, $\supp({\phi})\subset B_R\setminus \Sigma_\e$ for every $\e$ small enough. By using \eqref{eq:weak:D:u:eps}, we have
\[
	\int_{\supp({\phi})}|y|^a \tilde{f}\phi-\tilde{F}\cdot \D \phi=
\int_{\supp({\phi})}|y|^a A\D u_{\e_k}\cdot \D\phi\to\int_{\supp({\phi})}|y|^a A\D \bar{u}\cdot \D\phi,\quad \text{as }\e_k\to0,
\]
so, $\bar{u}$ is a weak solution to 
\[
\begin{cases}
-\div(|y|^a A \nabla \bar{u})= |y|^a \tilde{f}+\div(|y|^a \tilde{F}), & \text{in }B_R\setminus \Sigma_{0},\\
\bar{u}=0, & \text{on }\partial B_R \setminus \Sigma_0,\\
\bar{u}=0, & \text{on } \Sigma_0\cap B_R.
\end{cases}
\]
By uniqueness of weak solution to \eqref{eq:cut:off:approximation} (see  Remark \ref{rem:unicity}), we get that $\bar{u}=\tilde{u}$ in $H^{1,a}_0(B_R).$

\medskip

Finally, we prove that $u_{\e_k}\to \tilde{u}$ strongly in $H^{1,a}(B_R)$. By testing \eqref{eq:cut:off:approximation} with $\tilde u$, we get
\begin{equation}\label{eq:test:tilde:u}
\int_{B_R}|y|^a A\D\tilde u\cdot \D\tilde{u}dz = \int_{B_R}|y|^a \Big(\tilde{f} \tilde{u}-\tilde{F}\cdot \D \tilde{u}\Big)dz,
\end{equation}
and, by testing \eqref{eq:approximation:u:e} with $u_\e$ combined with \eqref{eq:weak:D:u:eps}, we have
\begin{equation}\label{eq:test:tilde:u:eps}
\int_{B_R}|y|^a A\D  u_\e\cdot \D{u_\e}dz = \int_{B_R}|y|^a \Big(\tilde{f} \tilde{u}_\e-\tilde{F}\cdot \D \tilde{u}_\e\Big)dz\to \int_{B_R}|y|^a \Big(\tilde{f} \tilde{u}-\tilde{F}\cdot \D \tilde{u}\Big)dz,
\end{equation}
along a subsequence $\e_k\to0$.
Putting together \eqref{eq:test:tilde:u} and \eqref{eq:test:tilde:u:eps} we obtain
\[
\lim_{\e_k\to0}\int_{B_R}|y|^a A\D  u_{\e_k}\cdot \D{u_{\e_k}}dz = \int_{B_R}|y|^a A\D\tilde u\cdot \D\tilde{u}dz.
\]
Since $A$ satisfies \eqref{eq:unif:ell}, one has that $\|\D u_{\e_k}\|_{L^{2,a}(B_R)}\to \|\D \tilde u\|_{L^{2,a}(B_R)}$. This, combined with \eqref{eq:weak:D:u:eps}, allows us to assert that 
\begin{equation}\label{eq:strong:D:u:eps}
u_{\e_k}\to \tilde{u},\quad \text{strongly in } H^{1,a}(B_R).
\end{equation}

Finally, since $\tilde{u}=u$, $\tilde{f}=f$, $\tilde{F}=F$ in $B_{r}$, we have that $u_\e$ is a weak solution to \eqref{eq:lemm:approximation:e} in $B_r$ and, by using \eqref{eq:ssss:stima} and \eqref{eq:strong:D:u:eps}, our statement follows.
\end{proof}

\section{Liouville theorems}\label{section:liouville}

The goal of this section is to prove the Liouville type Theorem \ref{teo:liouville} for homogeneous entire solutions to \eqref{eq:entire:solution} with constant coefficients. The proof is based on a spectral trace inequality which is stable with respect to $\e$, using an argument similar to \cite{SirTerVit21a}*{Theorem 3.4}.
We start with a couple of results which are crucial to treat the case $n=d$. 

\begin{lem}\label{lem:trace}
Let $ n= d$, $a+n\in(0,2)$, $0\le\e\ll 1$ and $r>0$. Let $A\in \R^{n,n}$ be a positive definite symmetric matrix and define $\Omega_r := \{ A^{-1} y\cdot y < r^2\}$. Then,
\begin{equation}\label{eq:spectral:0}
\int_{\Omega_r}  |y|^a  A \D v \cdot \D v   dz  \geq (2-a-n)\int_{ \partial \Omega_r} |y|^a v^2 d\sigma,
\end{equation}
for every $v \in C_c^\infty(\overline{\Omega}_r \setminus \Sigma_\e)$.

\end{lem}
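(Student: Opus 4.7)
Since $n=d$, the problem lives purely on $\R^n$ and $\Sigma_\e=\{|y|\le\e\}$ is a ball around the origin. My approach is a Picone-type energy identity built on an explicit ``fundamental solution'' of the operator $-\div(|y|^a A\D\cdot)$.

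Introduce the $A$-adapted norm $\rho(y) := (A^{-1}y\cdot y)^{1/2}$, so that $\Omega_r=\{\rho<r\}$ and $\pa\Omega_r=\{\rho=r\}$. Direct differentiation gives $A\D\rho = y/\rho$ and $A\D\rho\cdot\D\rho = 1$, and more generally, for $\gamma\in\R$,
\[
\div\bigl(|y|^a\rho^{\gamma-2}\,y\bigr) = (a+n+\gamma-2)\,|y|^a\rho^{\gamma-2}.
\]
Setting $\beta := (2-a-n)/2 > 0$, this divergence vanishes exactly for $\gamma = 2\beta$, and as a consequence $u(y) := \rho^{2\beta}$ solves $-\div(|y|^a A\D u)=0$ in $\R^n\setminus\{0\}$.

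Next I would derive a Picone identity. Since $v$ vanishes near the origin (where $\supp v \subset \overline{\Omega}_r\setminus\Sigma_\e$) and $u>0$ on $\supp v$, the test function $v^2/u$ is admissible. Testing the equation satisfied by $u$ against $v^2/u$ and integrating by parts on $\Omega_r\setminus\Sigma_\e$ (the boundary term on $\pa\Sigma_\e$ vanishes because $v=0$ there) gives
\[
\int_{\Omega_r}|y|^a A\D u\cdot\D(v^2/u)\,dy \;=\; \int_{\pa\Omega_r}|y|^a\,\frac{v^2}{u}\,(A\D u\cdot\nu)\,d\sigma.
\]
Using $\D(v^2/u)=(2v/u)\D v-(v^2/u^2)\D u$ together with the Cauchy-Schwarz inequality in the $A$-inner product,
\[
\frac{2v}{u}\,A\D u\cdot\D v \;\le\; \frac{v^2}{u^2}\,(A\D u\cdot\D u)+A\D v\cdot\D v,
\]
the Hardy-like term $\int|y|^a(v^2/u^2)(A\D u\cdot\D u)\,dy$ appears on both sides and cancels, leaving the key estimate
\[
\int_{\Omega_r}|y|^a A\D v\cdot\D v\,dy \;\ge\; \int_{\pa\Omega_r}|y|^a\,\frac{v^2}{u}\,(A\D u\cdot\nu)\,d\sigma.
\]

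Finally, I would evaluate the boundary integrand. On $\pa\Omega_r$ we have $u = r^{2\beta}$ and $A\D u = 2\beta\,\rho^{2\beta-2}\,y = 2\beta\,r^{2\beta-2}\,y$, while the outward unit normal is $\nu = A^{-1}y/|A^{-1}y|$. Using the defining identity $y\cdot A^{-1}y = r^2$ on $\pa\Omega_r$, one sees that the prefactor of $|y|^a v^2$ in the boundary integrand reduces to $2\beta = 2-a-n$, which yields \eqref{eq:spectral:0}.

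\textbf{Main obstacle.} The delicate step is the final boundary simplification: the anisotropy of the ellipsoid $\pa\Omega_r$ appears through $\nu$ and the quantity $|A^{-1}y|$, and one must carefully identify how the induced surface measure interacts with the weight $|y|^a$ in order to recover the exact constant $(2-a-n)$. Additionally, when $\e=0$, the integration by parts requires some care near the origin; this is handled by the compact support of $v$ away from $\Sigma_0$, possibly through a cutoff/density argument.
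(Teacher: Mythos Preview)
Your proposal is correct and follows essentially the same route as the paper: both introduce the explicit solution $\bar u(y)=|A^{-1/2}y|^{2-a-n}=\rho(y)^{2-a-n}$, plug $v^2/\bar u$ into a Picone identity, and integrate by parts to produce the boundary term. The paper records the Picone step as the exact identity $A\nabla\bar u\cdot\nabla(v^2/\bar u)=A\nabla v\cdot\nabla v-|A^{1/2}\nabla v-(v/\bar u)A^{1/2}\nabla\bar u|^2$, whereas you use the equivalent Cauchy--Schwarz formulation; your ``main obstacle'' about the anisotropic factor $|A^{-1}y|$ in the boundary computation is exactly the point the paper handles via the homogeneity relation $\nabla\bar u(y)\cdot y=(2-a-n)\bar u(y)$.
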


\begin{proof}
We provide the result for $r=1$. The case for generic $r>0$ follows by a scaling argument. 

Since $A$ is a positive definite symmetric matrix, it is well defined the square root $A^{1/2}$, which is a positive definite symmetric matrix too.
The homogeneous function
\[
\bar{u}(y):=|A^{-1/2} y|^{2-a-n},
\]
is solution (in a point-wise sense) to 
\begin{equation}\label{eq:special2}
-\div(|y|^a A  \nabla \bar{u})= 0, \quad \text{in } {\R^n}\setminus \Sigma_{0},
\end{equation}
and satisfies
\begin{equation}\label{eq:grad:special}
\nabla \bar{u}(y)=(2-a-n)|A^{-1/2} y |^{-a-n} A^{-1} y,\qquad \nabla \bar{u}(y)\cdot y= (2-a-n)\bar{u}(y).
\end{equation}
Indeed, equations \eqref{eq:special2} and \eqref{eq:grad:special}  are verified by a straightforward computation.

Fix $v\in C_c^\infty(\overline{\Omega}_1\setminus \Sigma_\e)$. Then,
\begin{equation}\label{eq:first:spectral}
\int_{\Omega_1}|y|^a A \nabla \bar{u}\cdot\nabla\Big(\frac{v^2}{\bar{u}}\Big)dy 
=
\int_{\Omega_1}|y|^a  \Big( A \nabla v\cdot \nabla v
-
 \Big| A^{1/2} \D v- \frac{v}{\bar{u}} A^{1/2}\D \bar{u}\Big|^2 \Big)  dy
\le 
\int_{\Omega_1}|y|^a  A \nabla v\cdot \nabla vdy,
\end{equation}
On the other hand, by using the divergence theorem, \eqref{eq:special2} and \eqref{eq:grad:special}, we have
\begin{equation}\label{eq:second:spectral}
\int_{\Omega_1}|y|^a A \nabla \bar{u}\cdot\nabla\Big(\frac{v^2}{\bar{u}}\Big) dy
=
\int_{\partial \Omega_1}|y|^a \frac{v^2}{\bar{u}} A \nabla \bar{u}\cdot \nu d\sigma
=
(2-a-n)\int_{\partial \Omega_1}|y|^a  v^2  d\sigma ,
\end{equation}
so, putting together \eqref{eq:first:spectral} and \eqref{eq:second:spectral}, our statement follows.
\end{proof}

\begin{lem}\label{lem:almgren}
Let $ n = d$, $a+n \in (0,2)$, $R>0$ and $0\le\e\ll 1$. Let $A\in \R^{n,n}$ be a positive definite diagonal matrix and define $\Omega_r := \{ A^{-1} y\cdot y < r^2\}$, for every $r>0$ such that $\partial \Omega_r \subset B_R\setminus\Sigma_\e$. Let $u$ be a weak solution to
\begin{equation}\label{eq:lem:alm}
\begin{cases}
-\div(|y|^a  A\nabla {u})= 0, & \text{in }B_R\setminus \Sigma_{\e},\\
{u}=0, & \text{on } \partial\Sigma_\e \cap B_R,
\end{cases}
\end{equation}
Up to consider the trivial extension of $u$ in the whole $B_R$ (see Remark \ref{rem:inclusion}), let us define
\begin{align*}
&E(u,r):=   \frac{1}{r^{n+a-2}}\int_{\Omega_r}|y|^a A\nabla u\cdot \D u  dy, \\
&H(u,r):=  \frac{1}{r^{n+a-1}} \int_{\partial \Omega_r }|y|^a u^2 d\sigma.
\end{align*}
Then,
\begin{equation}
\partial_r H(v,r)=\frac{2}{r}E(v,r),\quad  \text{ for every } r\in(0,R).
\end{equation}
\end{lem}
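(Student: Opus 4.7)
My plan is a three-step argument: rewrite $H(u,r)$ as an integral over the fixed surface $\partial\Omega_1$ via the dilation $y = rw$, differentiate in $r$, and then match the resulting expression with $(2/r)E(u,r)$ through integration by parts using the equation.

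The first step exploits the fact that $\Omega_r = r\,\Omega_1$, which is immediate from the defining condition, since $A^{-1}(rw)\cdot(rw) = r^2 A^{-1}w\cdot w$. On $\partial\Omega_r = \{rw : w\in\partial\Omega_1\}$ one has the surface Jacobian relation $d\sigma(y) = r^{n-1}d\sigma(w)$ and $|y|^a = r^a|w|^a$, so the prefactor $r^{-(n+a-1)}$ in the definition of $H$ cancels exactly and gives
\[
H(u,r) = \int_{\partial\Omega_1}|w|^a u(rw)^2\,d\sigma(w).
\]

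In the second step, interior regularity (the equation is classically uniformly elliptic on compact subsets of $\R^n\setminus\Sigma_\e$, since the weight $|y|^a$ is bounded and bounded below there) gives smoothness of $u$ in a neighbourhood of $\partial\Omega_r$, which justifies differentiation under the integral sign:
\[
\partial_r H(u,r) = 2\int_{\partial\Omega_1}|w|^a u(rw)(\D u(rw)\cdot w)\,d\sigma(w).
\]
Reverting to $y\in\partial\Omega_r$ with the same scaling identities one obtains
\[
\partial_r H(u,r) = \frac{2}{r^{n+a}}\int_{\partial\Omega_r}|y|^a u\,(\D u\cdot y)\,d\sigma.
\]

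The third step expresses $(2/r)E(u,r)$ as a surface integral. Observing that the trivial extension of $u$ to $\Omega_r\cap\Sigma_\e$ has vanishing gradient (see Remark \ref{rem:inclusion}), the volume integral on $\Omega_r$ coincides with the one on $\Omega_r\setminus\Sigma_\e$. Testing the weak formulation of $-\div(|y|^a A\D u) = 0$ against $u$ on $\Omega_r\setminus\Sigma_\e$ (with suitable cut-offs in the admissible test space, then passing to the limit), and using the Dirichlet condition $u=0$ on $\partial\Sigma_\e$ to annihilate the inner boundary contribution, one gets
\[
\int_{\Omega_r\setminus\Sigma_\e}|y|^a A\D u\cdot\D u\,dy = \int_{\partial\Omega_r}|y|^a u\,A\D u\cdot\nu\,d\sigma,
\]
where $\nu = A^{-1}y/|A^{-1}y|$ is the outward unit normal to the ellipsoid $\partial\Omega_r$.

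The main technical step, and the place where the diagonal (hence symmetric) structure of $A$ enters, is matching the two surface integrals. By symmetry of $A$, $A\D u\cdot\nu = \D u\cdot A\nu$, and $A\nu = y/|A^{-1}y|$ is parallel to the radial vector $y$ on $\partial\Omega_r$; combined with the defining identity $A^{-1}y\cdot y = r^2$ and the foliation structure of the level sets of $\phi(y) = A^{-1}y\cdot y$, this produces precisely the factor of $1/r$ needed to identify $\frac{2}{r^{n+a-1}}\int_{\partial\Omega_r}|y|^a u\,A\D u\cdot\nu\,d\sigma$ with $\frac{2}{r^{n+a}}\int_{\partial\Omega_r}|y|^a u(\D u\cdot y)\,d\sigma$. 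This geometric matching is the delicate point; once it is carried out, the identity $\partial_r H(u,r) = (2/r)E(u,r)$ follows and the proof is complete.
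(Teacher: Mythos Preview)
Your overall strategy---rescale $H$ to a fixed surface, differentiate, and compare with the boundary term obtained by testing the equation with $u$ on $\Omega_r$---is exactly the ``explicit computation'' the paper leaves implicit for $\e>0$; the paper then passes to $\e=0$ via the approximating family of Lemma~\ref{lem:approximation}, whereas you sketch an in-place cut-off argument, which is also reasonable.

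The genuine gap is the final matching, which you correctly flag as delicate but do not actually carry out. From the divergence theorem you obtain
\[
\int_{\Omega_r}|y|^a A\D u\cdot\D u\,dy=\int_{\partial\Omega_r}|y|^a u\,(A\D u\cdot\nu)\,d\mathcal{H}^{n-1}
=\int_{\partial\Omega_r}|y|^a u\,\frac{\D u\cdot y}{|A^{-1}y|}\,d\mathcal{H}^{n-1},
\]
so $\tfrac{2}{r}E(u,r)$ carries the factor $|A^{-1}y|^{-1}$. For this to match your Step~2 expression $\tfrac{2}{r^{n+a}}\int_{\partial\Omega_r}|y|^a u(\D u\cdot y)\,d\sigma$ with Euclidean $d\sigma$ you would need $|A^{-1}y|=r$ on $\partial\Omega_r$; but the defining relation gives $|A^{-1/2}y|=r$, which is a different quantity whenever $A\neq I$. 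Your appeal to ``the foliation structure'' does not supply the missing factor.

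What actually resolves this is that the surface measure in the definition of $H$ (and in Lemma~\ref{lem:trace}) should be read as the coarea-adapted measure $d\sigma = |\D\psi|^{-1}\,d\mathcal{H}^{n-1}$ with $\psi(y)=|A^{-1/2}y|$, so that $d\mathcal{H}^{n-1}=\tfrac{|A^{-1}y|}{|A^{-1/2}y|}\,d\sigma$. With this convention Steps~1--2 are unchanged (both measures scale by $r^{n-1}$ under dilation), while in Step~3 the conversion cancels $|A^{-1}y|$ and leaves $|A^{-1/2}y|^{-1}=r^{-1}$, giving precisely the identity. You should make the choice of surface measure explicit and insert this conversion; as written, the matching step is incorrect for general diagonal $A$.
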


\begin{proof}

When \(\varepsilon > 0\), classical regularity theory ensures that the function \(u\) is smooth in \(\overline{\Omega_r \setminus \Sigma_\varepsilon}\). Consequently, the result immediately follows trough explicit computations.

\smallskip

When $\e=0$, we proceed by an approximation argument. 
Fixed $0<\delta\ll 1$, by using the approximation Lemma \ref{lem:approximation}, we find a family $\{u_\e\}_{0<\e\ll 1}$ of solutions to \eqref{eq:lem:alm} in ${B}_{R-\delta}\setminus\Sigma_\e$ such that 
$u_{\e_k}\to u$ in $H^{1,a}({B}_{R-\delta})$ along a sequence ${\e_k}\to0$ and, by applying the trace Poincaré inequality \eqref{eq:poincaré:trace} (which also holds in $\Omega_r$) we get that $v_{\e_k}\to v$ in $L^{2,a}(\partial \Omega_r)$. Hence, we have that
\begin{align}\label{eq:almgren:limit}
\begin{split}
&\int_{\Omega_r}|y|^a A\D u_{\e_k} \cdot \D u_{\e_k} dy \to \int_{\Omega_r}|y|^a A\D u \cdot \D u dy ,
\\
&\int_{\partial \Omega_r}|y|^a  u_{\e_k}^2d\sigma \to \int_{\partial \Omega_r}|y|^a u^2d\sigma.
\end{split}
\end{align}
By utilizing the result obtained in the case $\e>0$ one finds that
\begin{equation}\label{eq:almgren:approximation}
\partial_r H(u_{\e_k},r)=\frac{2}{r}E(u_{\e_k},r).
\end{equation} 
By applying \eqref{eq:almgren:limit}, we can take the limit as $\e_k\to 0$ in \eqref{eq:almgren:approximation} to obtain $\partial_r H(u,r)=\frac{2}{r}E(u,r)$.
\end{proof}

The following lemma allows us to handle the unweighted variables
$x$. Its proof relies on the method of difference quotients and an iterative application of the Caccioppoli-type inequality \eqref{eq:caccioppoli}. For a detailed proof, see \cite{TerTorVit22}*{Corollary 4.2, Lemma 4.3} in a quite similar context.

\begin{lem}\label{L:dx}

Let $2\le n< d$, $a+n\in(0,2)$, $\e\ge0$. Let $A$ be a constant symmetric matrix satisfying \eqref{eq:unif:ell} and $u$ be an entire solution to \eqref{eq:entire:solution}. Then, the following holds true.
\begin{itemize}

\item[i)] For every $i=1,\dots,d-n$, the function $\partial_{x_i} u $ is an entire solution to the same problem.

\item[i)]  If $u$ satisfies the growth condition \eqref{eq:growth} for some $\gamma>0$, then $u$ must be a polynomial in the variable $x$ of degree almost $\lfloor \gamma \rfloor$.

\end{itemize}

\end{lem}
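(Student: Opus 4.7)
The plan is to combine the method of $x$-difference quotients with an iterated Caccioppoli-to-$L^\infty$ bootstrap. The key structural observation is that, since $A$ is constant, $|y|^a$ depends only on $y$, and both $\partial\Sigma_\e$ (for $\e > 0$) and $\Sigma_0$ (for $\e = 0$) are invariant under $x$-translations, the whole problem \eqref{eq:entire:solution} is invariant under the maps $(x,y) \mapsto (x + he_i, y)$ for $i = 1, \ldots, d-n$ and $h \in \R$.

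For part (i), fix $i$ and consider the difference quotient $\tau_h^i u(x,y) := h^{-1}(u(x + he_i, y) - u(x,y))$, which by translation invariance is again an entire solution. Applying the Caccioppoli inequality \eqref{eq:caccioppoli} (with $f = F = 0$) to $\tau_h^i u$ on a ball $B_R$, together with the classical bound $\|\tau_h^i u\|_{L^{2,a}(B_R)} \leq \|\partial_{x_i} u\|_{L^{2,a}(B_{R+|h|})}$, produces a bound on $\|\tau_h^i u\|_{H^{1,a}(B_R \setminus \Sigma_\e)}$ that is uniform in $h$. Weak compactness then yields a subsequence $h_k \to 0$ with $\tau_{h_k}^i u \rightharpoonup \partial_{x_i} u$ in $H^{1,a}$; passing to the limit in the linear weak formulation identifies $\partial_{x_i} u$ as an entire solution, and the zero boundary condition passes to the limit by continuity of the trace (via Theorem \ref{teo:NEK} when $\e = 0$).

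For part (ii), iterating part (i) shows that $\partial_x^\alpha u$ is an entire solution for every multi-index $\alpha \in \N^{d-n}$. Chaining \eqref{eq:caccioppoli} along $k = |\alpha|$ nested balls between $B_R$ and $B_{2R}$ yields
\begin{equation*}
\|\partial_x^\alpha u\|_{L^{2,a}(B_R \setminus \Sigma_\e)} \leq \frac{C_k}{R^k} \|u\|_{L^{2,a}(B_{2R} \setminus \Sigma_\e)} \leq C_k R^{\gamma + (d+a)/2 - k},
\end{equation*}
where the second inequality uses the growth bound \eqref{eq:growth} together with $\int_{B_{2R}} |y|^a dz \leq C R^{d+a}$. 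A rescaling $z \mapsto Rz$ leaves the homogeneous weighted equation invariant (since $|Ry|^a = R^a|y|^a$ cancels the two factors of $R$ produced by the gradient on each side), so Lemma \ref{lem:moser} applied to $\tilde u(z) := u(Rz)$ on the unit ball gives the scale-invariant bound $\|\partial_x^\alpha u\|_{L^\infty(B_{R/2})} \leq C R^{-(d+a)/2} \|\partial_x^\alpha u\|_{L^{2,a}(B_R)} \leq C R^{\gamma - k}$. For $k > \gamma$, that is $k \geq \lfloor \gamma \rfloor + 1$, every fixed $z$ eventually lies in $B_{R/2}$, so sending $R \to \infty$ forces $\partial_x^\alpha u(z) = 0$; hence $u$ is a polynomial in $x$ of degree at most $\lfloor \gamma \rfloor$. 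The main technical point to track is the $R$-scaling of the constant in Lemma \ref{lem:moser}, which is handled via the explicit change of variables above; the iterated Caccioppoli is routine, each step merely multiplying the constant by a factor $Ck^2$.
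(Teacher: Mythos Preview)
Your proposal is correct and follows essentially the same route the paper sketches: difference quotients in the $x$-variables for part (i), and an iterated application of the Caccioppoli inequality \eqref{eq:caccioppoli} for part (ii), exactly as the paper indicates (with reference to \cite{TerTorVit22}). Your argument is in fact more explicit than the paper's outline, since you spell out the final passage from the $L^{2,a}$ decay $\|\partial_x^\alpha u\|_{L^{2,a}(B_R)}\le C R^{\gamma+(d+a)/2-k}$ to the pointwise vanishing via the scale-invariant form of Lemma \ref{lem:moser}; the key fact you use there---that the constant in Lemma \ref{lem:moser} does not depend on $\e$, so the rescaled problem on $B_1\setminus\Sigma_{\e/R}$ enjoys the same bound---is precisely the uniformity proved in Section \ref{section:setting}. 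One cosmetic remark: for the boundary condition in part (i) it is cleaner to note that $\tilde H^{1,a}$ is a closed (hence weakly closed) subspace of $H^{1,a}$, rather than invoking Theorem \ref{teo:NEK} directly.
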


\begin{proof}[Proof of Theorem \ref{teo:liouville}.]

Let $u$ be an entire solution to \eqref{eq:entire:solution} and let us suppose that $n=d$ and so $z=y\in\R^n$. 

By contradiction let us suppose that $u\not\equiv0$. Let $r_0>0$ such that $\Sigma_\e \subset \Omega_r := \{A^{-1} y \cdot y \le r^2\}$ for every $r\ge r_0$ and define
\[
E(u,r)=\frac{1}{r^{n+a-2}}\int_{\Omega_r }|y|^a A \D u\cdot \D u dy,
\]
\[
H(u,r)=\frac{1}{r^{n+a-1}}\int_{\partial \Omega_r}|y|^a u^2 d\sigma.
\]
By applying Lemma \ref{lem:trace} and Lemma \ref{lem:almgren}, we get 
\[\partial_r H(u,r)=\frac{2}{r}E(u,r)\ge \frac{2(2-a-n)}{r}H(u,r),$$
which implies
$$H(u,r)\ge H(u,r_0)r^{2(2-a-n)},\quad \text{ for every } r>r_0,\]
by Gronwall's inequality. On the other hand, since $A$ satisfies \eqref{eq:unif:ell}, the growth condition \eqref{eq:growth} implies
\[H(u,r)\le c (1+r^{2\gamma}).\]
Combining these two inequalities we get
\[
H(u,r_0)\le cr^{2(\gamma-(2-a-n))}.
\]
Taking the limit as $r\to \infty$ and using $\gamma<2-a-n$ we get $H(u,r_0)=0$. Since $r_0>0$ is arbitrary, we deduce that $u\equiv 0$ in $\R^d\setminus \Omega_{r_0}$. Moreover, since $u$ is a solution to \eqref{eq:entire:solution} and satisfies $u=0$ on $\partial (\Omega_{r_0}\setminus \Sigma_\e)$, we apply the existence and uniqueness result (see Remark \ref{rem:unicity}) to conclude that $u \equiv 0$ in  $\Omega_{r_0}\setminus \Sigma_\e$. Therefore, $u\equiv 0 $ in $\R^d\setminus\Sigma_\e$, which leads to a contradiction.

\medskip

Let us consider the case $n<d$.
By Lemma \ref{L:dx}, one has that $u$ is polynomial in the variable $x$.
Hence, if $\gamma\in(0,1)$, the function $u$ must be constant in $x$, so $u(x,y)=u(y)$ and our statement follows by using the result obtained in the case $n=d$.
If $\gamma\in[1,2)$, we have that $u$ must be linear in $x$, that is
\[u(x,y)= u_i(y)+\sum_{i=1}^{d-n}x_iu_i(y),\]
for some unknown functions $u_i(y)$. 
First, 
\[|u_0(y)|=|u(0,y)|\le c(1+|y|^\gamma).\]
On the other hand,
\[|u(e_{x_i},y)|=|u_i(y)+u_0(y)|\le c(1+|y|^\gamma),\]
and so
\[ |u_i(y)| \le |u_0(y)|+ c(1+|y|^\gamma)\le  c(1+|y|^\gamma). \]
Hence, every $u_i$ satisfies the growth condition \eqref{eq:growth} for every $i=0,\dots,d-n$. 

Next, for every $i=1,\dots,d-n$, by applying Lemma \ref{L:dx} we have that $\partial_{x_i}u(x,y) = u_i(y)$ is an entire solution to \eqref{eq:entire:solution} and satisfies \eqref{eq:growth}. Then, the result obtained in the case $d=n$ allows us to conclude that $u_i=0$ for every $i=1,\dots,d-n$ and so $u(x,y)=u_0(y)$. Hence, using again the case $d=n$, we have that $u$ must be zero and our statement follows.
\end{proof}

\section{H\"older estimates for weak solutions}\label{section:holder}

The goal of this section is to prove Theorem \ref{teo:0}, which we obtain as a by-product of $\e$-uniform H\"older estimates for solutions in perforated domains and the approximation Lemma \ref{lem:approximation}.

\begin{teo}\label{teo:0alpha:e}
Let $2\le n\le d$, $a+n\in(0,2)$, $p>{d}/{2}$, $q>d$ and $\alpha$ satisfying \eqref{eq:alpha:0}.
Let $A$ be a continuous matrix satisfying \eqref{eq:unif:ell} and $\|A\|_{C^0({B_1})}\le L$, $f\in L^{p,a}(B_{1})$ and $F\in L^{q,a}(B_{1})^d$. For $0<\e\ll1$, let $\{u_\e\}$ be a family of solutions to 
\begin{equation}\label{eq:sol:c0}
\begin{cases}
-\div(|y|^a A \nabla u_\e)= |y|^a f+\div(|y|^aF), & \text{in }B_1\setminus \Sigma_{\e},\\
u_\e=0, & \text{on } \partial \Sigma_\e \cap B_1.
\end{cases}
\end{equation}
Then, there exists a constant $c>0$, depending only on $d$, $n$, $a$, $\lambda$, $\Lambda$, $p$, $q$, $\alpha$ and $L$ such that
\begin{equation}\label{eq:stima:c0:e}
\|u_\e\|_{C^{0,\alpha}(B_{1/2}\setminus\Sigma_\e)}\le c\big(
\|u_\e\|_{L^{2,a}(B_{1}\setminus\Sigma_\e)}
+\|f\|_{L^{p,a}(B_{1})}
+\|F\|_{L^{q,a}(B_{1})^d}
\big).
\end{equation}

\end{teo}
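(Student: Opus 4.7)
The plan is a contradiction-and-blow-up argument of Simon type \cite{Sim95}, turning the failure of the H\"older bound into a nontrivial entire solution of the constant-coefficient homogeneous problem with controlled polynomial growth, which must then vanish by Theorem \ref{teo:liouville}. A uniform $L^\infty$ bound is already supplied by Lemma \ref{lem:moser}, so only the H\"older seminorm of $u_\e$ needs to be controlled uniformly in $\e$. Suppose \eqref{eq:stima:c0:e} fails. Then there exist sequences $\e_k\to 0$, matrices $A_k$, data $(f_k,F_k)$, and solutions $u_k$ of \eqref{eq:sol:c0} with the right-hand side of \eqref{eq:stima:c0:e} normalized to $1$ and $\|u_k\|_{C^{0,\alpha}(B_{1/2}\setminus\Sigma_{\e_k})}\to\infty$. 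I introduce the Campanato-type quantity
\[
\Theta_k(r) := \sup_{z_0,\, r'\ge r} \frac{1}{(r')^\alpha}\, \inf_{c\in\R}\, \|u_k - c\|_{L^\infty(B_{r'}(z_0)\cap(B_{3/4}\setminus\Sigma_{\e_k}))},
\]
which is non-increasing in $r$, select points $z_k$, radii $r_k\to 0$, and constants $c_k$ almost attaining the sup defining $\Theta_k(r_k)$, and set
\[
v_k(\zeta) := \frac{u_k(z_k + r_k\zeta) - c_k}{\Theta_k(r_k)\, r_k^\alpha}.
\]
By construction $v_k$ is nondegenerate ($\inf_c\|v_k-c\|_{L^\infty(B_1)}\ge 1/2$) and obeys the growth bound $\inf_c\|v_k-c\|_{L^\infty(B_R)}\le R^\alpha$ for every $R\ge 1$. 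A change of variables shows that $v_k$ solves a weighted elliptic equation with coefficients $\tilde A_k(\zeta) := A_k(z_k + r_k\zeta)$, while the source and field terms carry prefactors $r_k^{2-\alpha}/\Theta_k(r_k)$ and $r_k^{1-\alpha}/\Theta_k(r_k)$ multiplying rescaled weighted $L^{p,a}$ and $L^{q,a}$ norms, which vanish in the limit by the restrictions on $\alpha$ in \eqref{eq:alpha:0} combined with the scaling of the weighted norms.

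For the compactness step one distinguishes cases according to the behaviour of $|y_k|/r_k$ and $\e_k/r_k$. Classical interior Schauder estimates, together with the De Giorgi--Nash--Moser theory for $A_2$-weights \cite{FKS82} (applicable away from the singular set, where the weight is smooth and bounded from above and below), force $z_k$ to concentrate near $\partial\Sigma_{\e_k}$, hence near $\Sigma_0$. Up to a translation in $y$ and subsequence extraction, the rescaled hole $\partial\Sigma_{\e_k}/r_k$ converges to $\partial\Sigma_{\e_\infty}$ for some $\e_\infty\in[0,\infty)$, and $\tilde A_k(\cdot)$ converges locally uniformly to a constant matrix $A_\infty$ by uniform continuity. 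The growth bound and local H\"older equicontinuity yield, up to subsequence, $v_k\to v_\infty$ locally uniformly. Passing to the weak formulation (using Lemma \ref{lem:approximation} when $\e_\infty=0$ to identify the boundary condition on $\Sigma_0$), the limit $v_\infty$ is an entire solution of
\[
-\div(|y|^a A_\infty \nabla v_\infty) = 0 \quad \text{in } \R^d\setminus\Sigma_{\e_\infty}, \qquad v_\infty = 0 \quad \text{on } \partial\Sigma_{\e_\infty},
\]
inheriting the polynomial growth with exponent $\alpha<2-a-n$ and the nondegeneracy. Theorem \ref{teo:liouville} then forces $v_\infty\equiv 0$, contradicting the nondegeneracy and closing the argument.

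The principal obstacle is the joint control of the three scales $\e_k$, $r_k$, and $|y_k|$ so that the blow-up limit genuinely falls within the scope of Theorem \ref{teo:liouville}. One must verify that $\Theta_k(r_k)\to\infty$ along the chosen subsequence (requiring one to separate the near-$\Sigma_0$ oscillation from the large-scale tail contribution), that the rescaled blow-up center effectively lies on the singular set $\partial\Sigma_{\e_\infty}$ of the limit (ruling out interior blow-ups via classical Schauder), and that the weighted rescaled data decay to zero. The sharp exponent constraints $\alpha<2-d/p$ and $\alpha<1-d/q$ in \eqref{eq:alpha:0} enter precisely in the decay estimate for $(r_k^{2-\alpha}/\Theta_k(r_k))\,\tilde f_k$ and $(r_k^{1-\alpha}/\Theta_k(r_k))\,\tilde F_k$ in the weighted norms, while $\alpha<2-a-n$ is exactly the Liouville input required by Theorem \ref{teo:liouville}.
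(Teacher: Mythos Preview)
Your overall strategy---contradiction plus blow-up feeding into the Liouville Theorem~\ref{teo:liouville}---matches the paper's. However, your case analysis of the blow-up limit is incomplete. You assert that, up to translation, the rescaled hole $\partial\Sigma_{\e_k}/r_k$ always converges to $\partial\Sigma_{\e_\infty}$ for some finite $\e_\infty\in[0,\infty)$, and then apply Theorem~\ref{teo:liouville}. This misses the regime in which the blow-up center lies within distance $O(r_k)$ of $\partial\Sigma_{\e_k}$ while $\e_k/r_k\to\infty$ (equivalently $|y_k|/r_k\to\infty$ with $(|y_k|-\e_k)/r_k$ bounded). In that situation the rescaled cylinder of radius $\e_k/r_k\to\infty$ flattens locally to a hyperplane, the rescaled weight $|\tilde y_k+r_ky|^a/|\tilde y_k|^a\to 1$, and the limit problem is $-\div(\bar A\nabla\bar v)=0$ in a half-space $\Pi$ with $\bar v=0$ on $\partial\Pi$. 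This is \emph{not} covered by Theorem~\ref{teo:liouville}; the paper treats it as a separate \textbf{Case 2} and closes it with the classical half-space Liouville theorem. Your remark that interior estimates force $z_k$ to concentrate near $\partial\Sigma_{\e_k}$ is fine, but being near $\partial\Sigma_{\e_k}$ with $\e_k\gg r_k$ is precisely the half-space scenario, not the $\R^d\setminus\Sigma_{\e_\infty}$ one.

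Two smaller points. First, your appeal to Lemma~\ref{lem:approximation} to recover the boundary condition on $\Sigma_0$ when $\e_\infty=0$ is misplaced: that lemma manufactures perforated approximants of a given solution on $B_R\setminus\Sigma_0$, not the converse. The paper obtains $\bar v=0$ on $\partial\Sigma_{\bar\e}$ (including $\bar\e=0$) directly from the locally uniform convergence $v_k\to\bar v$ together with $u_k=0$ on $\partial\Sigma_{\e_k}$. Second, the paper works with the two-point H\"older seminorm of $\eta u_k$ (with $\eta$ a cutoff) rather than a Campanato oscillation; your variant can be made to work, but you still need the three-case split according to whether $|y_k|/r_k$ and $(|y_k|-\e_k)/r_k$ diverge or stay bounded, yielding respectively $\R^d$, a half-space, and $\R^d\setminus\Sigma_{\bar\e}$ as limit domains.
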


\begin{proof}
By classical regularity theory, we know that solutions to \eqref{eq:sol:c0} are $C^{0,\alpha}(B_{1/2}\setminus\Sigma_\e)$ and that \eqref{eq:stima:c0:e} holds with a constant $c>0$ that may also depend on $\e$. Our goal is to show that it is possible to provide a constant $c>0$ that is uniform in $\e$.

Without loss of generality, we can assume that 
\[
\|u_\e\|_{L^{2,a}(B_{1}\setminus\Sigma_\e)}
+\|f\|_{L^{p,a}(B_{1})}
+\|F\|_{L^{q,a}(B_{1})^d}\le c,
\]
for some $c>0$, which not depends on $\e$. Moreover, by using the local uniform bound of weak solutions in (see Lemma \ref{lem:moser}), it follows that 
\begin{equation}\label{eq:c0:l:infty}
\|u_\e\|_{L^{\infty}(B_{3/4}\setminus\Sigma_\e)}\le c,
\end{equation}
for some $c>0$, which not depends on $\e$.

\medskip
\noindent
\emph{Step 1. Contradiction argument and blow-up sequences.} 
By contradiction let us suppose that there exist $p>{d}/{2}$, $q>d$, $\alpha$ satisfying \eqref{eq:alpha:0}, $\{u_k\}_k:=\{u_{\e_k}\}_k$ as $\e_k\to0$ such that 
\begin{equation}\label{eq:sol:c0:k}
\begin{cases}
-\div(|y|^a A \nabla u_k)= |y|^a f+\div(|y|^aF), & \text{in }B_1\setminus \Sigma_{\e_k},\\
u_k=0, & \text{on } \partial \Sigma_{\e_k}\cap B_1,
\end{cases}
\end{equation}
and 
\begin{equation*}
\|u_k\|_{C^{0,\alpha}(B_{1/2}\setminus\Sigma_{\e_k})}\to \infty. 
\end{equation*}
Let us fix a smooth cut-off function $\eta \in C_c^\infty(B_1)$ such that
\[
\supp(\phi)\subset B_{3/4},
\quad 
\eta=1 \text{ in }B_{1/2},
\quad
0\le\eta\le1.
\]
By \eqref{eq:c0:l:infty}, one has that 
\begin{equation*}
L_k:=[\eta u_k]_{C^{0,\alpha}(B_{1}\setminus\Sigma_{\e_k})} \to \infty.
\end{equation*}
By definition of H\"older seminorm, take two sequences of points $z_k=(x_k,y_k),\hat{z}_k=(\hat x_k,\hat y_k)\in B_{1}\setminus\Sigma_{\e_k}$ such that
\begin{equation}\label{eq:c0:alpha:points}
\frac{|(\eta u_k)(z_k)-(\eta u_k)(\hat z_k)|}{|z_k-\hat z_k|^\alpha}\ge\frac{L_k}{2},
\end{equation}
define $r_k:=|z_k-\hat z_k|$ and observe that at least one of $z_k$ or $\hat{z}_k$ belongs to $B_{3/4} \setminus \Sigma_{\e_k}$.
First, by using the local uniform bound of weak solutions \eqref{eq:c0:l:infty}, we have that $r_k\to0$, in fact
\[
L_k\le\frac{4\|\eta u_k\|_{L^\infty(B_{3/4}\setminus\Sigma_{\e_k})}}{r_k^\alpha}\le \frac{c}{ r_k^\alpha},
\]
which implies
\[
r_k\le \frac{c^{1/\alpha}}{L_k^{1/\alpha}}\to 0,\quad\text{ as } k\to \infty.
\]
From now on we distinguish three cases.
 
\smallskip

\begin{itemize}[left=0pt]
    \item \textbf{Case 1:} $ 
    \displaystyle{
    \frac{|y_k|}{r_k}\to \infty, \quad \frac{|y_k|-\e_k}{r_k}\to \infty,}$
    
        \item \textbf{Case 2:} $ 
    \displaystyle{
    \frac{|y_k|}{r_k}\to \infty, \quad \frac{|y_k|-\e_k}{r_k}\le c},
     $
    
\item \textbf{Case 3:}
    $ 
    \displaystyle{
    \frac{|y_k|}{r_k}\le c, }$
    
\end{itemize}
for some constant $c>0$ which not depends on $k$.
Let $z_k^0 := (x_k,y_k^0)$ be the projection of $z_k$ on $\partial \Sigma_{\e_k}$ and define
$$\tilde{z}_k=(\tilde{x}_k,\tilde{y}_k) := \begin{cases}
(x_k,y_k),&\text{ in }\textbf{Case 1},\\
(x_k,y_k^0),&\text{ in }\textbf{Case 2},\\
(x_k,0),&\text{ in }\textbf{Case 3}.\\
\end{cases} $$
Define the sequence of domains 
\begin{equation*}\label{eq:domains}
\Omega_k :=\frac{B_{1}\setminus\Sigma_{\e_k}-\tilde z_k}{r_k}=\Big\{z=(x,y):|\tilde z_k+r_kz|< 1, \text{ and }|\tilde y_k+r_ky|> \e_k\Big\},
\end{equation*}
and, for every $z\in \Omega_k $, let us define the sequence of functions
\begin{align*}& v_k(z):=\frac{(\eta u_k)( \tilde z_k+r_kz)-(\eta u_k)( \tilde z_k)}{r_k^\alpha L_k },&w_k(z) := \frac{\eta(\tilde{z}_k)\big( u_k( \tilde z_k+r_kz)-u_k( \tilde z_k)\big)}{r_k^\alpha L_k }, 
\end{align*}
in \textbf{Case 1} and \textbf{Case 2}, and
\begin{align*}
 &v_k(z):=\frac{(\eta u_k)( \tilde z_k+r_kz)}{r_k^\alpha L_k },
&w_k(z) := \frac{\eta(\tilde{z}_k) u_k( \tilde z_k+r_kz)}{r_k^\alpha L_k }, 
\end{align*}
in \textbf{Case 3}.

\medskip
\noindent
\emph{Step 2. Blow-up domains.}
Let us define
\begin{equation}\label{eq:omega:infty}
\Omega_\infty := \big\{z = (x,y) \in \R^d : \text{exists }\hat k \text{ such that } z \in \Omega_k \text{ for every }k\geq \hat k\big\}.
\end{equation}
In this section we show who is the limit domain $\Omega_\infty:=\lim_{k\to \infty}\Omega_k$, along a suitable subsequence.
First, in every case, for every $z \in \R^d$, one has that 
$$|\tilde z_k+r_k z|<| z_k|+r_k|z| \le 3/4 +o(1) < 1.$$
Hence, to prove that $z\in\Omega_\infty$, we only need to show that $\tilde z_k+r_kz\not\in \Sigma_{\e_k}$, that is,
\begin{equation}\label{eq:dom:3}
|\tilde{y}_k+r_k y|> \e_k.
\end{equation}
Let us start with \textbf{Case 1}, recalling that $\tilde{z}_k=z_k$. Fix $z \in \R^d$ and by contradiction let us suppose that \eqref{eq:dom:3} does not hold. Then, since $|\cdot|$ is a Lipschitz function, one has that
\[
  \frac{|y_k| - \e_k }{r_k}  \le \frac{  |y_k|- |y_k + r_k y|}{r_k} \le c|y|,
\]
and taking the limit as $k\to  \infty$, it follows
\[
|y|\ge \infty,
\]
which is a contradiction. Hence, $\Omega_\infty = \R^d$.

\smallskip

Next, let us consider the \textbf{Case 2} and recall that $\tilde{z}_k =(x_k, y_k^0 )$, $|y_k^0| = \e_k$, $r_k/\e_k \to 0$. Defining
\[
\bar{e}:=\lim_{k\to \infty}\frac{{y}_k^0}{| {y}_k^0|},
\]
we claim that $\Omega_\infty = \Pi := \{(x,y): y\cdot \bar{e}>0\}$, which is an half-space. We observe that, for every $y \in \R^n$,
\begin{equation}\label{eq:dom:exp}
\frac{  | y_k^0 + r_k y|-| y_k^0|}{r_k} -\frac{ y_k ^0}{| y_k^0|}\cdot y   \le c \frac{r_k}{\e_k} \to 0,
\end{equation} as $k\to \infty$. Indeed, by using Lagrange's Theorem to the function $|\cdot|$, there exists $y^*$ (which could depend on $k$) such that $|y^*|\le |y|$ and denoting $y_k^*:=y_k^0+r_ky^*$, we have
\begin{align*}
&\Big|\frac{| y_k^0 + r_k y|-| y_k^0|}{r_k} - \frac{y_k^0 \cdot y   }{|y_k^0|}\Big|=\Big|\frac{y_k^*  \cdot y   }{|y_k^*|} - \frac{y_k^0  \cdot y   }{|y_k^0|}\Big| \le \Big|\frac{y_k^*  \cdot y   }{|y_k^*|} - \frac{y_k^* \cdot y   }{|y_k^0|}\Big| + \Big|\frac{y_k^* \cdot y   }{|y_k^0|} - \frac{y_k^0 \cdot y  }{|y_k^0|}\Big|\\
&\le c\Big|\frac{|y_k^0|-|y_k^*|}{|y_k^0|}\Big| + c\Big|\frac{|y_k^*-y_k^0|}{|y_k^0|}\Big|\le c\frac{r_k}{|y_k^0|}= c \frac{r_k}{\e_k} \to 0,\quad \text{as }k\to \infty,
\end{align*} 
so  \eqref{eq:dom:exp} holds true. Let us fix $z=(x,y)$ such that $\bar{e}\cdot y = \delta>0$ and suppose by contradiction that \eqref{eq:dom:3} doesn't hold, that is
\[
\frac{  | y_k^0 + r_k y|-| y_k^0|}{r_k} -\frac{ y_k ^0}{| y_k^0|}\cdot y + \frac{ y_k ^0}{| y_k^0|}\cdot y \le 0.
\]
So, by taking the limit as $k\to \infty$ and using \eqref{eq:dom:exp}, we obtain
\[
\bar{e}\cdot y \le 0,
\]
which is a contradiction. In analogous way, we have that every $z=(x,y)$ such $\displaystyle{|\bar{e}\cdot y |=-\delta <0}$ satisfies $z \not \in \Omega_k$. Hence, since $\delta>0$ is arbitrary, the claim follows, that is, $\Omega_\infty = \Pi $.

\smallskip 
Finally, let us consider the \textbf{Case 3}, recall that $\e_k< |y_k|\le c r_k$ and $\tilde{z}_k = (x_k, 0)$.  First, up to consider a subsequence, the following limit is well defined
\begin{equation}\label{eq:dom:2} 
\bar{\e}:=\lim_{k\to \infty}\frac{\e_k}{r_k}\in [0,c].
\end{equation}
Let us fix $z=(x,y)$ such that $|{y}|=\bar{\e}+\delta$ for some $\delta>0$
and suppose by contradiction that \eqref{eq:dom:3} doesn't hold. Then, 
\[
\bar{\e}+\delta = |{y}| \ge \frac{\e_k}{r_k} \to \bar{\e},
\]
which is a contradiction, so $z \in \Omega_k$. Instead, fixed $z=(x,y)$ such that $|{y}|=\bar{\e}-\delta$ for some $\delta>0$, one has  $z \not \in \Omega_k$.
Since $\delta>0$ is arbitrary, we have  that
\[
\Omega_\infty=\R^d\setminus \Sigma_{\bar \e} = \{(x,y):|y|>\bar{\e}\}.
\]
Resuming, we have shown that the limit domain is
\begin{equation}\label{eq:limit:domain}
\Omega_\infty=\begin{cases}
\R^d ,&\text{in }\textbf{Case 1},\\
\Pi,  &\text{in }\textbf{Case 2},\\
\R^d\setminus \Sigma_{\bar \e},&\text{in }\textbf{Case 3},\\
\end{cases}
\end{equation}
where $\Pi := \{(x,y)\in\R^d :\bar{e}\cdot y \ge 0\}$ is an half-space.

\medskip
\noindent
\emph{Step 3. H\"older estimates and  convergence of the blow-up sequences.}
Let us fix a compact set $K\subset \Omega_\infty$. For every $z,z'\in K$ such that $z\not=z'$, we have
\[
|v_k(z)-v_k(z')|=\frac{|(\eta u_k)(\tilde{z}_k+r_kz)-(\eta u_k)(\tilde{z}_k+r_kz')|}{r_k^\alpha L_k }\le |z-z'|^\alpha,
\]
that is,
\begin{equation}\label{eq:alpha:estimate}
[v_k]_{C^{0,\alpha}(K)}\le 1.
\end{equation}
In \textbf{Case 1} and \textbf{Case 2}, by using $v_k(0)=0$, we get the uniform bound $\|v_k\|_{C^{0,\alpha}(K)} \le c$, for every compact subset $K\subset \Omega_\infty$.
Instead, in \textbf{Case 3}, since $\e_k/r_k\le c$, one has that
\[
|v_k(x,y)|=|v_k(x,y)-v_k(0,{y_k^0}/{r_k})|\le [v_k]_{C^{0,\alpha}(K)}(|x|+|y-y_k^0/r_k|)^\alpha\le c,
\]
for some $c>0$ which depends only on $K$, where we have used the boundary condition $\eta u_k = 0$ on $\partial \Sigma_{\e_k}$. Hence,
we have that $\|v_k\|_{L^\infty(K)}\le c$, which implies that $\|v_k\|_{C^{0,\alpha}(K)}\le c$ also in this case.

\smallskip

By applying the Arzelà-Ascoli Theorem we conclude that $v_k\to\bar{v}$ uniformly in $K$. By a standard diagonal argument, we can take the the limit as $k\to \infty$ in \eqref{eq:alpha:estimate} to obtain 
\begin{equation*}\label{eq:alpha:v}
[\bar v]_{C^{0,\alpha}(\Omega_\infty)}\le 1,
\end{equation*}
which implies that $\bar v$ satisfies the growth condition
\begin{equation}\label{eq:growth:0:eps}
|\bar{v}|\le c(1+|z|^\alpha),\quad\text{a.e. in }\Omega_\infty.
\end{equation}

\smallskip

Moreover, since $u_{\e_k}= 0 $  on $\partial \Sigma_{\e_k}$, and then $v_k = 0 $ on $({\partial \Sigma_{\e_k}-\tilde{z}_k})/{r_k}$, by employing the local uniform convergence we have that $\bar{v}= 0$ on $\partial \Pi$ in \textbf{Case 2} and $\bar{v}= 0$ on $\partial \Sigma_{\bar{\e}}$ in \textbf{Case 3}.

\smallskip

Furthermore, the sequences $v_k$ and $w_k$ converge to the same limit function. Let us fix a compact set $K \subset \Omega_\infty$. For every $z \in K$, by using \eqref{eq:c0:l:infty}, we have 
\[
|v_k(z)-w_k(z)| \le \frac{(\eta (\tilde{z}_k+r_k z) -\eta (\tilde{z}_k) ) \|u_k\|_{L^\infty(B_{3/4 \setminus\Sigma_{\e_k}})} }{r_k^\alpha L_k} \le \frac{c r_k^{1-\alpha}}{L_k} \to 0, \quad \text{as }k \to \infty.
\]
Hence, the sequences $v_k$ and $w_k$ have the same asymptotic behaviour as $k\to \infty$ on every $K\subset \Omega_\infty$, which implies that $w_k \to \bar{v}$ uniformly on $K$.

\medskip
\noindent
\emph{Step 4. The limit function $\bar{v}$ is not constant.} 
Let us consider the sequences of points

\[
\xi_k^1:=\frac{{z}_k-\tilde z_k}{r_k},\quad \xi_k^2:=\frac{\hat{z}_k-\tilde z_k}{r_k}.
\]
By \eqref{eq:c0:alpha:points}, we have
\[
|v_k(\xi_k^1)-v_k(\xi_k^2)|= \frac{|(\eta u_k)(\hat{z}_k)-(\eta u_k)({z_k})|}{r_k^\alpha L_k}\ge \frac{1}{2}.
\]
In \textbf{Case 1}, we have that $$\xi_k^1=0,\quad  \xi_k^2 = \frac{\hat z_k- z_k}{r_k} \in \partial B_1,$$ 
then, $\xi_k^1\to 0$, $\xi_k^2\to \xi^2\not= 0$.

In \textbf{Case 2}, 
$$\xi_k^1=\frac{(0,y_k-y_k^0)}{r_k},\quad \xi_k^2 = \frac{(\hat x_k- x_k,\hat{y}_k-y_k^0)}{r_k}.$$ 
Since $(|y_k|-\e_k)/r_k\le c$ uniformly in $k$,
$|\xi_k^1-\xi_k^2|=1$, we have that $\xi_k^1\to\xi^1$, $\xi_k^2\to \xi^2$ and $\xi^1\not=\xi^2$.

In \textbf{Case 3}, 
$$\xi_k^1=\frac{(0,y_k)}{r_k},\quad \xi_k^2 = \frac{(\hat x_k- x_k,\hat{y}_k)}{r_k}.$$ Since $|y_k|/r_k\le c$ uniformly in $k$,
$|\xi_k^1-\xi_k^2|=1$, we have that $\xi_k^1\to\xi^1$, $\xi_k^2\to \xi^2$ and $\xi^1\not=\xi^2$.

Hence, by the local uniform convergence $v_k\to\bar{v}$ we get that $|\bar{v}({\xi^1})-\bar{v}({\xi^2})|\ge1/2$, so $\bar v$ is not constant.

\medskip
\noindent
\emph{Step 5. $\bar{v}$ is an entire solution to a homogeneous equation with constant coefficients.} 
Let us define $A_k(x,y):=A(\tilde{z}_k+r_kz)$
and $(\bar x,\bar y) = \lim_{k\to \infty}(\tilde{x}_k,\tilde{y}_k)$. By using the continuity of $A$, we can define $\bar{A}:=\lim_{k\to \infty}A_k(z)=A(\bar{x},\bar{y})$,
which is a constant coefficients symmetric matrix satisfying \eqref{eq:unif:ell}.
Next, set
$$\rho_k(y):=\begin{cases}
\displaystyle{\frac{|\tilde y_k+r_k y|}{|\tilde y_k|}}, &\text{ in }\textbf{Case 1 }\text {and } \textbf{Case 2},\\
|y|, &\text{ in }\textbf{Case 3},\\
\end{cases}$$
observing that in \textbf{Case 1} and \textbf{Case 2}
\begin{equation}\label{eq:xyx}
 |\rho_k(y)|=1+o(1),
 \end{equation}
as $k\to \infty$ on every compact subset $K\subset\R^d$.

\smallskip

Fix $\phi\in C_c^\infty(\Omega_\infty)$.
since $u_k$ is a weak solution to \eqref{eq:sol:c0:k}, a straightforward computation shows us that
\begin{align}\label{eq:sol:vk}
\begin{aligned}
&\int_{\supp(\phi)}\rho_k^a(y) A_k(z)\D w_k(z)\cdot \D {\phi}(z) dz
=
\frac{r_k^{2-\alpha} \eta(\tilde{z}_k)}{L_k} \int_{\supp(\phi)}\rho_k^a(y) f(\tilde{z}_k+r_kz){\phi}(z) dz \\
\qquad& - 
\frac{r_k^{1-\alpha} \eta(\tilde{z}_k)}{L_k}
 \int_{\supp(\phi)}\rho_k^a(y)F(\tilde{z}_k+r_kz)\cdot \D {\phi}(z) dz .
 \end{aligned}
\end{align}
Now, we aim to prove that the right-hand side of \eqref{eq:sol:vk} vanishes as $k\to \infty$. First, we focus on the term involving the function $f$.
Let us consider the \textbf{Case 1} and \textbf{Case 2} together.  By using the H\"older inequality, \eqref{eq:xyx} and $a<0$, we get
\begin{align*}
&\Big|\int_{\supp(\phi)}\frac{|\tilde y_k+r_ky|^a}{|\tilde y_k|^a} f(\tilde z_k+r_kz){\phi}(z) dz \Big|\\
&\le 
c\|\phi\|_{L^\infty}\Big(r_k^{-d}|\tilde{y}_k|^{-a}\int_{B_1}|y|^a|f|^p dz \Big)^{1/p}\Big(\int_{\supp(\phi)}
\frac{|\tilde y_k+r_ky|^a}{|\tilde y_k|^a}
 dz \Big)^{1/p'}
\le c r_k^{{d}/{p}}.
\end{align*}
Hence,
\begin{equation*}
\frac{r_k^{2-\alpha} \eta(\tilde{z}_k)}{L_k}\Big| \int_{\supp(\phi)}\rho_k^a(y) f(\tilde z_k+r_kz){\phi}(z)dz \Big|\le c r_k^{2-\alpha-{d}/{p}}L_k^{-1}\to0,
\end{equation*}
as $k\to \infty$, by the hypothesis \eqref{eq:alpha:0}, which implies $\alpha\le 2-{d}/{p}$.

\smallskip

Let us consider now the \textbf{Case 3}. By using the H\"older inequality and $a<0$, we get
\begin{align*}
&\Big|\int_{\supp(\phi)}|y|^a f(\tilde z_k+r_kz){\phi}(z)dz\Big|\\
&\le c\|\phi\|_{L^\infty}\Big(r_k^{-d-a}\int_{B_1}|y|^a|f|^p dz \Big)^{1/p}\Big(\int_{\supp(\phi)}
|y|^a dz 
\Big)^{1/p'}
\le c r_k^{-{d}/{p}},
\end{align*}
and then
\begin{equation*}
\frac{r_k^{2-\alpha} \eta(\tilde{z}_k)}{L_k}\Big| \int_{\supp(\phi)}|y|^a f(\tilde z_k+r_kz){\phi}(z) dz\Big|\le c r_k^{2-\alpha-{d}/{p}}L_k^{-1}\to0,
\end{equation*}
as before.

\smallskip

The second member of the the right hand side of \eqref{eq:sol:vk} vanishes as $k\to \infty$ by using similar computations. In fact, in every cases, we have
\begin{equation*}
\frac{r_k^{1-\alpha} \eta(\tilde{z}_k)}{L_k}\Big| \int_{\supp(\phi)}\rho_k^a(y) F(\tilde z_k+r_kz)\cdot\D {\phi}(z)dz \Big|\le c r_k^{1-\alpha-{d}/{q}}L_k^{-1}\to0,
\end{equation*}
by the assumption \eqref{eq:alpha:0}, which implies that $\alpha \le 1- d/q$.

\smallskip

Finally, we prove that the left hand side of \eqref{eq:sol:vk} converges in the following sense

\begin{equation}\label{eq:l.h.s.}
\int_{\supp(\phi)}\rho_k^a(y) A_k(z)\D w_k (z)\cdot \D {\phi}(z)dz\to \int_{\supp(\phi)}\bar{\rho}^a(y) \bar{A}\D \bar{v}(z)\cdot \D \phi(z)dz, 
\end{equation}
where $$\bar\rho(y):=\begin{cases}
1,&\text{ in }\textbf{Case 1 }\text{and } \textbf{Case 2},\\
|y|,&\text{ in }\textbf{Case 3}.
\end{cases}$$
Let us fix $R>0$ such that $\supp(\phi)\subset B_{2R}\cap \Omega_k $ for every $k$ large enough.
Since $w_k$ is uniformly bounded in $L^\infty(B_{2R}\cap \Omega_\infty)$, one has that $w_k$ is uniformly bounded in $L^{2}(B_{2R}\cap \Omega_\infty,\rho_k^a(y)dz)$ and, by applying the Caccioppoli-type inequality \eqref{eq:caccioppoli}, we get that  $w_k$ is uniformly bounded in $H^{1}(B_{2R}\cap \Omega_\infty,\rho_k^a(y)dz)$. Using $\rho_k^a\to\bar \rho$ and $A_k\to\bar{A}$ a.e. in $\Omega_\infty$ and arguing as in the proof of Lemma \ref{lem:approximation}, we can conclude that \eqref{eq:l.h.s.} holds true and $\bar{v}\in H^1(B_{R}\cap \Omega_\infty, \bar \rho^a )$.

\smallskip

Hence, recalling the definition of the limit domain $\Omega_\infty$, see \eqref{eq:limit:domain}, and using \emph{Step 3} for the boundary condition, we conclude that
\begin{itemize}[left=0pt]
\item in \textbf{Case 1}, $\bar{v}$ is an entire solution to
$$-\div(\bar{A}\D\bar{v})=0,\quad\text{in }\R^d,$$

\item in \textbf{Case 2}, $\bar{v}$ is an entire solution to

$$\begin{cases}
-\div(\bar{A}\D\bar{v})=0, & \text{in }\Pi ,\\
\bar{v}=0, & \text{on } \partial\Pi,
\end{cases}$$
\item in \textbf{Case 3}, $\bar{v}$ is an entire solution to

$$\begin{cases}
-\div(|y|^a\bar{A}\D\bar{v})=0, & \text{in }\R^d\setminus\Sigma_{\bar{\e}},\\
\bar{v}=0 ,& \text{on } \partial\Sigma_{\bar{\e}}.
\end{cases}$$

\end{itemize}

\medskip
\noindent
\emph{Step 6. Liouville Theorems and conclusion.}
By \eqref{eq:growth:0:eps} we have that $\bar v $ satisfies the growth condition
$$|\bar{v}(z)|\le c(1+|z|^\alpha),$$ for every $z\in \Omega_\infty$, where $\alpha<\min\{1,2-a-n\}$ by hypothesis \eqref{eq:alpha:0}. 
In \textbf{Case 1} and \textbf{Case 2}, by invoking the classical Liouville Theorem, we can conclude that $\bar v$ must be constant and this is a contradiction since $\bar v$ is not constant by \emph{Step 4}.
In \textbf{Case 3}, we have that $\bar{v}$ satisfies the hypothesis of the Liouville Theorem \ref{teo:liouville} and so $\bar{v}$ must be identically zero, which is a contradiction.
Then, $L_k \le c$ uniformly in $k$, which implies that $[u_{\e}]_{C^{0,\alpha}(B_{1/2}\setminus\Sigma_\e)}\le c$. The proof is complete.
\end{proof}

\begin{proof}[Proof of the Theorems \ref{teo:0}]

Let $u$ be a weak solution to \eqref{eq:1} and consider the trivial extension of $\psi$ in $B_1$, that is, $\psi(x,y)=\psi(x)$, for every $(x,y)\in B_1$. Let us define 
$$v:=u-\psi. $$
Since $\psi$ is a Lipschitz function, we have that $ v \in \tilde H^{1,a}(B_1)$ and $v$ is a weak solution to 
\[
\begin{cases}
-\div(|y|^a A \nabla v)= |y|^a f+\div(|y|^a (F-A\D \psi)), & \text{in }B_1\setminus \Sigma_{0},\\
v=0, & \text{on } \Sigma_0\cap B_1.
\end{cases}
\]
By applying Lemma \ref{lem:approximation} we find a sequence $\{v_{\e_k}\}$ as $\e_k\to0$, such that every $v_{\e_k}$ is solution to 
\[
\begin{cases}
-\div(|y|^a A \nabla v_{\e_k})= |y|^a f+\div(|y|^a(F-A\D\psi)), & \text{in }B_{3/4}\setminus \Sigma_{\e_k},\\
v_{\e_k}=0, & \text{on } \partial \Sigma_{\e_k}\cap B_{3/4},
\end{cases}
\]
and $v_{\e_k}\to v$ in $H^{1,a}(B_{3/4})$ as $\e_k\to0$. By applying the Theorem \ref{teo:0alpha:e} to the sequences $\{v_{\e_k}\}$, combined with \eqref{eq:stima:rhs}, we get that
\begin{equation}\label{eq:teo:0:a}
\|v_{\e_k}\|_{C^{0,\alpha}(B_{1/2}\setminus\Sigma_{\e_k})}\le c \big(
\|u\|_{L^{2,a}(B_{1})}
+\|f\|_{L^{p,a}(B_{1})}
+\|F\|_{L^{q,a}(B_{1})^d}
+\| \psi\|_{C^{0,1}(\Sigma_0\cap B_{1})}
\big),
\end{equation}
for some $c>0$ depending only on $d$, $n$, $a$, $\lambda$, $\Lambda$, $p$, $q$, $\alpha$ and $L$.

By applying Arzelà-Ascoli Theorem, we get that ${v}_{\e_k}\to w$ in $C_{\loc}^{0,\gamma}(B_{1/2}\setminus\Sigma_0)$, for every $\gamma\in(0,\alpha)$, and by the a.e. convergence $v_{\e_k}\to v$ it follows that $v=w$. Furthermore, by taking $z,z'\in B_{1/2}\setminus\Sigma_0$ such that $z\not=z'$, we have that
\begin{align*}
&\frac{|v(z)-v(z')|}{|z-z'|^\alpha}=\lim_{\e_k\to0}\frac{|v_{\e_k}(z)-v_{\e_k}(z')|}{|z-z'|^\alpha}\\
&\le c \big(
\|u\|_{L^{2,a}(B_{1})}
+\|f\|_{L^{p,a}(B_{1})}
+\|F\|_{L^{q,a}(B_{1})^d}
+\| \psi\|_{C^{0,1}(\Sigma_0\cap B_{1})}
\big),
\end{align*}
which implies that 
\begin{align*}
&[v]_{C^{0,\alpha}(B_{1/2}\setminus\Sigma_0)}= \sup_{\substack{z,z'\in B_{1/2}\setminus\Sigma_0\\ z \not= z'}}\frac{|v(z)-v(z')|}{|z-z'|^\alpha}\\
&\le c \big(
\|u\|_{L^{2,a}(B_{1})}
+\|f\|_{L^{p,a}(B_{1})}
+\|F\|_{L^{q,a}(B_{1})^d}
+\| \psi\|_{C^{0,1}(\Sigma_0\cap B_{1})}
\big).
\end{align*}
By continuity, we can extend $v$ to the entire $B_{1/2}$ in such a way that $[v]_{C^{0,\alpha}(B_{1/2})} = [v]_{C^{0,\alpha}(B_{1/2} \setminus \Sigma_0)}$.
Finally, combining the previous estimates with the $L^\infty_{\loc}$ bounds of solutions (see Lemma \ref{lem:moser}), we obtain
\begin{align*}
&\|u\|_{C^{0,\alpha}(B_{1/2})}\le \|v\|_{C^{0,\alpha}(B_{1/2})}+\|\psi\|_{C^{0,\alpha}(\Sigma_0\cap B_{1/2})}\\
&\le c \big(
\|u\|_{L^{2,a}(B_{1})}
+\|f\|_{L^{p,a}(B_{1})}
+\|F\|_{L^{q,a}(B_{1})^d}
+\| \psi\|_{C^{0,1}(\Sigma_0\cap B_{1})}
\big).
\end{align*}
that is, $u \in C^{0,\alpha}(B_{1/2})$ and \eqref{eq:c0} holds true. 
\end{proof}

\section{Schauder estimates for weak solutions}\label{section:schauder}

This section is devoted to the proof of the Theorem \ref{teo:1}, which establishes the $C^{1,\alpha}_\loc$ regularity for weak solutions. To achieve this result, we proceed as follows: first, as in Theorem \ref{teo:0alpha:e}, we prove $\e$-uniform estimates for solutions in perforated domains, with an additional assumption on the field $F$. As we will see in Remark \ref{R:F:radial}, this condition cannot be removed. Next, we show \emph{a priori} estimates for solutions, which also satisfy an additional boundary condition on $\Sigma_0$. Afterwards, we establish the main result through a double approximation process: the first involves perforated domains and using Lemma \ref{lem:approximation}, while the second one is a standard approximation via convolution with a family of mollifiers. 

\begin{teo}\label{teo:1alpha:e}
Let $2\le n\le d$, $a+n\in(0,1)$, $p>{d}$ and $\alpha$ satisfying \eqref{eq:alpha:1}.
Let $A$ be a $\alpha$-H\"older continuous matrix satisfying \eqref{eq:unif:ell} and $\|A\|_{C^{0,\alpha}{(B_1)}}\le L$, $f\in L^{p,a}(B_1)$, $F\in{C^{0,\alpha}{(B_1)}}$ be a field such that $F(x,0)\cdot e_{y_i}=0$ for every $(x,0)\in B_1$ and for every $i=1,\dots,n$. For $0<\e\ll1$, let $\{u_\e\}$ be a family of solutions to \eqref{eq:sol:c0}.

Then, there exists a constant $c>0$, depending only on $d$, $n$, $a$, $\lambda$, $\Lambda$, $p$, $\alpha$ and $L$ such that
\begin{equation}\label{eq:stima:c1:e}
\|u_\e\|_{C^{1,\alpha}(B_{1/2}\setminus\Sigma_\e)}\le c\big(
\|u_\e\|_{L^{2,a}(B_{1}\setminus\Sigma_\e)}
+\|f\|_{L^{p,a}(B_{1})}
+\|F\|_{C^{0,\alpha}(B_{1})}
\big).
\end{equation}
In addition, $u_\e$ satisfies
\begin{equation}\label{eq:formal:conormal:eps}
|\D u_\e (z)|\le c\big(
\|u_\e\|_{L^{2,a}(B_{1}\setminus\Sigma_\e)}
+\|f\|_{L^{p,a}(B_{1})}
+\|F\|_{C^{0,\alpha}(B_{1})}
\big)\e^\alpha, \quad \text{for every }z\in\partial \Sigma_\e \cap B_{1/2}.
\end{equation}

\end{teo}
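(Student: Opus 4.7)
The plan is to adapt the contradiction/blow-up scheme of Theorem~\ref{teo:0alpha:e} to the $C^{1,\alpha}$ setting, substituting a first-order Taylor subtraction for the plain rescaling and again invoking the Liouville Theorem~\ref{teo:liouville} as the terminal step. Assume for contradiction that the estimate fails, so there exist $\e_k\to 0$ and solutions $u_k = u_{\e_k}$, normalized so the right-hand side of \eqref{eq:stima:c1:e} is bounded by $1$ but with
\[
L_k := [\eta\,\nabla u_k]_{C^{0,\alpha}(B_1\setminus\Sigma_{\e_k})} \to \infty,
\]
for a cut-off $\eta\in C_c^\infty(B_1)$ equal to $1$ on $B_{1/2}$. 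Pick points $z_k,\hat z_k$ almost realizing the seminorm, set $r_k := |z_k-\hat z_k|$, and use the already-established $C^{0,\alpha}$ estimate (Theorem~\ref{teo:0alpha:e}) to deduce $r_k\to 0$. Split into the three cases of Theorem~\ref{teo:0alpha:e} according to the behaviour of $|y_k|/r_k$ and $(|y_k|-\e_k)/r_k$, obtaining the same limit geometries $\Omega_\infty\in\{\R^d,\Pi,\R^d\setminus\Sigma_{\bar\e}\}$.

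The key new ingredient is to subtract a first-order Taylor polynomial at a base point $\tilde z_k$: in Cases~1 and 2 set
\[
v_k(z) := \frac{u_k(\tilde z_k + r_k z) - u_k(\tilde z_k) - r_k\,\nabla u_k(\tilde z_k)\cdot z}{r_k^{1+\alpha} L_k},
\]
and in Case~3 perform the expansion at the projection $z_k^0\in\partial\Sigma_{\e_k}$ of $\tilde z_k$, where $u_k(z_k^0)=0$ so that $v_k$ naturally vanishes on the rescaled hole $(\partial\Sigma_{\e_k}-z_k^0)/r_k\to\partial\Sigma_{\bar\e}$. Classical interior regularity away from the perforation guarantees that $\nabla u_k$ exists pointwise, hence the definitions are meaningful. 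By construction one has $[\nabla v_k]_{C^{0,\alpha}(K)}\le 1$ on every compact $K\subset\Omega_\infty$ and $v_k(0)=|\nabla v_k(0)|=0$, which yields the growth control $|v_k(z)|\le c(1+|z|^{1+\alpha})$. Along a subsequence, by Arzelà--Ascoli applied to both $v_k$ and $\nabla v_k$, one extracts a limit $\bar v$ enjoying the same growth bound and with $\nabla \bar v$ non-constant (by the choice of $z_k,\hat z_k$, arguing as in \emph{Step 4} of Theorem~\ref{teo:0alpha:e}).

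Passing to the limit in the PDE is the delicate step. Testing against $\phi\in C_c^\infty(\Omega_\infty)$, the right-hand side of the scaled equation for $v_k$ involves $r_k^{2-\alpha}\eta(\tilde z_k)f(\tilde z_k+r_k z)/L_k$ and $r_k^{1-\alpha}\eta(\tilde z_k)F(\tilde z_k+r_k z)/L_k$; the first vanishes via $\alpha\le 1-d/p$ exactly as in Theorem~\ref{teo:0alpha:e}. For the $F$-term split $F(\tilde z_k + r_k z) = F(\tilde z_k) + O(r_k^\alpha)$: the Hölder remainder contributes $O(r_k/L_k)\to 0$, while the constant $F(\tilde z_k)$ can be reintroduced in the equation as a divergence-free test-free term, producing only a boundary flux along $\partial\Omega_\infty$. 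This is precisely where the structural hypothesis $F(x,0)\cdot e_{y_i}=0$ becomes indispensable: in Case~3, $\tilde z_k\to\Sigma_0$, so in the limit $F(\tilde z_k)$ is tangent to $\partial\Sigma_{\bar\e}$ and the boundary flux is killed (cf.\ Remark~\ref{R:F:radial}). The outcome is that $\bar v$ is an entire solution to a homogeneous constant-coefficient equation of the type \eqref{eq:entire:solution}, with Dirichlet data on $\partial\Omega_\infty$ when the latter is non-empty. Since $1+\alpha<2-a-n$ by \eqref{eq:alpha:1}, Theorem~\ref{teo:liouville} applies in Case~3 and forces $\bar v\equiv 0$, while in Cases~1 and 2 the classical Liouville theorem for uniformly elliptic operators (applied to $\nabla \bar v$) gives that $\nabla \bar v$ is constant; in all cases this contradicts the non-constancy of $\nabla \bar v$, establishing \eqref{eq:stima:c1:e}.

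For the auxiliary bound \eqref{eq:formal:conormal:eps}, note first that $u_\e\equiv 0$ on $\partial\Sigma_\e$ annihilates every tangential derivative, so at $z_0\in\partial\Sigma_\e\cap B_{1/2}$ one has $\nabla u_\e(z_0) = \partial_\nu u_\e(z_0)\,\nu$ with $\nu = y_0/|y_0|$. To control the normal component run a second contradiction/blow-up at the intrinsic scale $\e$: if $|\nabla u_{\e_k}(z_k)|/\e_k^\alpha\to\infty$ for some $z_k\in\partial\Sigma_{\e_k}$, set
\[
w_k(z) := \frac{u_{\e_k}(z_k+\e_k z)}{\e_k^{1+\alpha}M_k}, \qquad M_k := \frac{|\nabla u_{\e_k}(z_k)|}{\e_k^\alpha},
\]
so that $|\nabla w_k(0)|=1$ and, by the just-proved uniform estimate \eqref{eq:stima:c1:e}, $[\nabla w_k]_{C^{0,\alpha}}\to 0$. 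Extracting a limit, $w_\infty$ is an entire solution on $\R^d\setminus\Sigma_1$ vanishing on $\partial\Sigma_1$, with $\nabla w_\infty$ constant and growth at most $|z|^{1+\alpha}$ at infinity; Theorem~\ref{teo:liouville} (together with the same $F$-structural hypothesis to handle the constant field) forces $w_\infty\equiv 0$, contradicting $|\nabla w_\infty(0)|=1$. The two main obstacles are thus the Taylor-subtracted blow-up in Case~3 (to preserve the Dirichlet condition on the rescaled hole through the limit) and the clean passage to the limit of the $F$-term, where the assumption $F(x,0)\cdot e_{y_i}=0$ cannot be dispensed with.
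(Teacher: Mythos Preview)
Your overall blow-up scheme is the right one, and your treatment of the $F$-term (split off $F(\tilde z_k)$, use $F(x,0)\cdot e_{y_i}=0$) is essentially what the paper does. However, there is a genuine gap in \textbf{Case 3}. You claim that by expanding at $z_k^0\in\partial\Sigma_{\e_k}$ one simultaneously has $v_k(0)=|\nabla v_k(0)|=0$ \emph{and} ``$v_k$ naturally vanishes on the rescaled hole''. These two properties are incompatible: with the full first-order subtraction,
\[
v_k(z)=\frac{u_k(z_k^0+r_kz)-r_k\nabla u_k(z_k^0)\cdot z}{r_k^{1+\alpha}L_k},
\]
you do get $v_k(0)=|\nabla v_k(0)|=0$, but on the rescaled boundary (where $u_k=0$) the value is $-\nabla u_k(z_k^0)\cdot z/(r_k^\alpha L_k)$, which has no reason to vanish. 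Conversely, if you drop the linear subtraction so that $v_k=0$ on the boundary, then $|\nabla v_k(0, y_k^0/r_k)|=|\nabla u_k(z_k^0)|/(r_k^\alpha L_k)$ is a priori unbounded and Arzel\`a--Ascoli fails. The paper resolves this by \emph{not} subtracting in Case~3 and instead proving the key boundary estimate
\[
|\nabla(\eta u_k)(z')|\le c\,L_k\,\e_k^\alpha,\qquad z'\in\partial\Sigma_{\e_k}\cap B_{3/4},
\]
obtained by a purely geometric argument: tangential derivatives vanish on $\partial\Sigma_{\e_k}$, and comparing the gradient at $z'$ with the gradient at another boundary point $\e_k e_{y_h}$ (where the same component is tangential) gives the bound via the $C^{0,\alpha}$ seminorm of $\nabla(\eta u_k)$. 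This estimate yields both compactness in Case~3 \emph{and}, once $L_k\le c$ is established, the conclusion \eqref{eq:formal:conormal:eps} for free---your second blow-up is therefore unnecessary.

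A second point you skip is the extra term coming from the subtracted linear part in Cases~1--2: plugging $v_k$ into the equation produces a term of the form $r_k^{-\alpha}L_k^{-1}\int\rho_k^a A_k(\eta\nabla u_k)(\tilde z_k)\cdot\nabla\phi$, which must also be shown to vanish. The paper handles this (term III in its Step~6) via an integration by parts against $\nabla\rho_k^a$ combined with the boundary gradient bound above, plus a two-step bootstrap (first prove the estimate for a suboptimal $\alpha'<\alpha$ to control $\|\nabla u_k\|_{L^\infty}$, then redo the argument for the sharp $\alpha$). Without this, the passage to the limit in the PDE is incomplete.
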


\begin{proof}
Under the assumptions of the theorem, classical Schauder theory ensures that solutions to \eqref{eq:sol:c0} are $C^{1,\alpha}(B_{1/2}\setminus\Sigma_\e)$ and that \eqref{eq:stima:c1:e} holds with a constant $c>0$ that may also depend on $\e$. Our goal is to show that it is possible to provide a constant $c>0$ that not depends on $\e$.

Without loss of generality, we can assume that 
\[
\|u_\e\|_{L^{2,a}(B_{1}\setminus\Sigma_\e)}
+\|f\|_{L^{p,a}(B_{1})}
+\|F\|_{L^{q,a}(B_{1})^d}\le c,
\]
for some $c>0$, which not depends on $\e$. Moreover, for every $\beta \in (0,1)$, the assumptions of Theorem \ref{teo:0alpha:e} are satisfied, so
\begin{equation}\label{eq:c1:l:infty}
\|u_\e\|_{C^{0,\beta}(B_{3/4}\setminus\Sigma_\e)}\le c,
\end{equation}
for some $c>0$, which not depends on $\e$.

\medskip
\noindent
\emph{Step 1. Contradiction argument and blow-up sequences.} 
By contradiction let us suppose that there exists $p>d$, $\alpha$ satisfying \eqref{eq:alpha:1}, $\{u_k\}_k:=\{u_{\e_k}\}_k$, as $\e_k\to0$ such that 
$u_k$ is solution to \eqref{eq:sol:c0:k} and \[
\|\D u_k\|_{C^{0,\alpha}(B_{1/2}\setminus\Sigma_{\e_k})}\to \infty.
\]
Let us fix a smooth cut-off function $\eta \in C_c^\infty(B_1)$ such that
\[
\supp(\phi)\subset B_{3/4},
\quad 
\eta=1 \text{ in }B_{1/2},
\quad
0\le\eta\le1.
\]
One has that 
\begin{equation*}
\|\eta u_k\|_{C^{1,\alpha}(B_{1}\setminus\Sigma_{\e_k})} \to \infty.
\end{equation*}
Let us define 
\begin{equation*}
L_k:=[\D(\eta u_k)]_{C^{0,\alpha}(B_{1}\setminus\Sigma_{\e_k})},
\end{equation*}
and notice that it cannot be possible that $[\D (\eta u_k)]_{C^{0,\alpha}(B_1)}\le c$ and $\| \D (\eta u_k)\|_{L^\infty(B_1)}\to \infty$, since $\D (\eta u_k) = 0$ outside of $ B_{3/4}\setminus\Sigma_{\e_k}$. Hence, $L_k\to \infty$.

By definition of H\"older seminorm, take two sequences of points $z_k=(x_k,y_k),\hat{z}_k=(\hat x_k,\hat y_k)\in B_{1}\setminus\Sigma_{\e_k}$ such that
\begin{equation}\label{eq:c1:alpha:points}
\frac{|\D(\eta u_k)(z_k)-\D(\eta u_k)(\hat z_k)|}{|z_k-\hat z_k|^\alpha}\ge\frac{L_k}{2},
\end{equation}
define $r_k:=|z_k-\hat z_k|$ and observe that at least one of $z_k$ or $\hat z_k$ belongs to $B_{3/4}\setminus\Sigma_{\e_k}$.
We distinguish three cases.
 
\smallskip

\begin{itemize}[left=0pt]
    \item \textbf{Case 1:} $ 
    \displaystyle{
    \frac{|y_k|}{r_k}\to \infty, \quad \frac{|y_k|-\e_k}{r_k}\to \infty,}$
    
        \item \textbf{Case 2:} $ 
    \displaystyle{
    \frac{|y_k|}{r_k}\to \infty, \quad \frac{|y_k|-\e_k}{r_k}\le c},
     $
    
\item \textbf{Case 3:}
    $ 
    \displaystyle{
    \frac{|y_k|}{r_k}\le c, }$
    
\end{itemize}
for some constant $c>0$ which not depends on $k$. We notice that $r_k \to 0$ in \textbf{Case 1} and \textbf{Case 2} and we show later that $r_k\to 0$ also in \textbf{Case 3}.

\smallskip

Let $z_k^0 := (x_k,y_k^0)$ be the projection of $z_k$ on $\partial \Sigma_{\e_k}$, define
$$\tilde{z}_k=(\tilde{x}_k,\tilde{y}_k) := \begin{cases}
(x_k,y_k),&\text{ in }\textbf{Case 1},\\
(x_k,y_k^0),&\text{ in }\textbf{Case 2},\\
(x_k,0),&\text{ in }\textbf{Case 3},\\
\end{cases} $$
and the sequence of domains 
\begin{equation*}
\Omega_k :=\frac{B_{1}\setminus\Sigma_{\e_k}-\tilde z_k}{r_k}.
\end{equation*}
For every $z\in \Omega_k $, let us define the sequence of functions

\begin{align*}
&v_k(z):= \frac{
(\eta u_k)( \tilde{z}_k+r_kz) -(\eta u_k )(\tilde{z}_k) - \D(\eta u_k)(\tilde{z}_k)\cdot r_k z
}{L_k r_k^{1+\alpha}},\\
&w_k(z):=\frac{\eta(\tilde{z}_k) u_k( \tilde{z}_k+r_kz) -  (\eta u_k) (\tilde{z}_k) -(\eta \D  u_k)(\tilde{z}_k)\cdot r_k z}{L_k r_k^{1+\alpha}},
\end{align*}
in \textbf{Case 1} and \textbf{Case 2}, and
\begin{align*}
v_k(z):= \frac{
(\eta u_k)( \tilde{z}_k+r_kz)}{L_k r_k^{1+\alpha}}, \quad w_k(z):=\frac{\eta(\tilde{z}_k)  u_k( \tilde{z}_k+r_kz) }{L_k r_k^{1+\alpha}},
\end{align*}
in \textbf{Case 3}. Furthermore, let us define the limit domain $\Omega_\infty $ as in \eqref{eq:omega:infty}.

\medskip
\noindent
\emph{Step 2. Gradient H\"older estimates and convergence of the blow-up sequences.} 
Let us fix a compact set $K\subset \Omega_\infty$. For every $z,z'\in K$ such that $z\not=z'$ we have
\[
|\D v_k(z)-\D v_k(z')|=\frac{|\D (\eta u_k)(\tilde{z}_k+r_kz)-\D (\eta u_k)(\tilde{z}_k+r_kz')|}{r_k^\alpha L_k }\le |z-z'|^\alpha,
\]
that is
\begin{equation}\label{eq:alpha:estimate:1}
[\D v_k]_{C^{0,\alpha}(K)}\le 1.
\end{equation}
In \textbf{Case 1} and \textbf{Case 2}, since  $v_k(0)=0$ and $|\D v_k(0)|=0$, we get the uniform bound of the norm 
\[\|v_k\|_{C^{1,\alpha}(K)}\le c.\]
In \textbf{Case 3} we use a different argument to show a uniform bound of $\|v_k\|_{C^{1,\alpha}(K)}$. First, for every point $z'_k=(x_k',y_k')\in \partial\Sigma_{\e_k}\cap B_{3/4}$, one has that 
one has that $(\eta u_k)(z'_k)=0$ and, recalling that the normal vector to $ \partial \Sigma_{\e_k}$ at the point ${z}'_k$ is the vector $(0,y'_k)/|y'_k| \in \mathbb{S}^{d-1}$, it follows that
\[
\D (\eta u_k) (z'_k)\cdot \vec e =0,\quad \text{for every vector } \vec e\perp (0,y'_k).
\]
Let us fix $i,h\in \{1,\dots,n\}$ such that $i\not=h$. Then, 
\begin{align}\label{eq:D:vk0:y}
\begin{split}
&\big|\D (\eta u_k) (x_k',y_k')\cdot e_{y_i}\big|=\big|\big(\D (\eta u_k) (x_k',y_k')-\D (\eta u_k) (x_k',\e_k e_{y_h})\big)\cdot e_{y_i}\big|\\&
\le [\D(\eta u_k)]_{C^{0,\alpha}(B_1\setminus\Sigma_{\e_k})} |y_k'-\e_k e_{y_h}|^\alpha\le 2^\alpha L_k\e_k^\alpha,
\end{split}
\end{align}
and 
\begin{equation}\label{eq:D:vk0:x}
|\D (\eta u_k) (x_k',y_k')\cdot e_{x_j}|=0,\quad \text{for every }j=1,\dots,d-n.
\end{equation}
Hence, by using \eqref{eq:D:vk0:y} and \eqref{eq:D:vk0:x}, it follows that
\begin{equation}\label{eq:stima:grad:bordo}
|\D (\eta u_k ) (x_k',y_k')|\le c L_k \e_k^\alpha.
\end{equation}
By using interpolation inequality in H\"older spaces \cite{GT01}*{Lemma 6.35}, one has 
$$\|v_k\|_{C^{1,\alpha}(K)}\le c( \|v_k\|_{L^\infty(K)}+[v_k]_{C^{1,\alpha}(K)}),$$
and, by using the first order expansion of $v_k$ at the point $(0,y_k^0/r_k)$ we obtain
\begin{align}\label{eq:vk:first:order}
\begin{split}
|v_k(x,y)|&\le|v_k(x,y)-v_k(0,y_k^0/r_k)-\D v_k(0,y_k^0/r_k)\cdot z|+|\D v_k(0,y_k^0/r_k)|\\
&\le  c(|x|+|y-y_k^0/r_k|)^{1+\alpha}+|\D v_k(0,y_k^0/r_k)|\le c+|\D v_k(0,y_k^0/r_k)|,
\end{split}
\end{align}
for some $c>0$ which depends only on $K$, noticing that $|y_k^0/r_k|=\e_k/r_k\le c$ uniformly in $k$ in \textbf{Case 3}.
So, if 
\begin{equation}\label{eq:D:vk(0)}
|\D v_k(0,y_k^0/r_k)|\le c,
\end{equation}
uniformly in $k$, we can take the $\sup_K$ in \eqref{eq:vk:first:order} to obtain a uniform of $\|v_k\|_{L^\infty(K)}$ which implies $\|v_k\|_{C^{1,\alpha}(K)}\le c$.
Then, by using \eqref{eq:stima:grad:bordo} and recalling that $(x_k,y_k^0) \in \partial\Sigma_{\e_k}\cap B_{3/4}$, we get that
\[
|\D v_k(0,y_k^0/r_k)|=\frac{|\D (\eta u_k) (x_k,y_k^0)|}{r_k^\alpha L_k}\le c\frac{\e_k^\alpha}{r_k^\alpha}\le c.
\]
So, \eqref{eq:D:vk(0)} holds true and $\|v_k\|_{C^{1,\alpha}(K)}\le c$ uniformly in $k$.

\smallskip

Then, we may apply the Arzelà-Ascoli Theorem to infer that $v_k\to\bar{v}$ in $ C^{1,\gamma}(K)$ for any $\gamma\in(0,\alpha)$. By a standard diagonal argument, we can take the the limit as $k\to\infty$ in \eqref{eq:alpha:estimate:1} to obtain
\begin{equation*}
[\D \bar v]_{C^{1,\alpha}(\Omega_\infty)}\le 1,
\end{equation*} 
which implies that
 \begin{equation}\label{eq:eps:growth:1}
 |\bar{v}(z)|\le c(1+|z|^{1+\alpha}),\quad \text{a.e. in }\Omega_\infty.
 \end{equation}

\smallskip

Moreover, $v_k$ and $w_k$ converge to the same limit function $\bar{w}$.
Let us fix a compact set $K \subset \Omega_\infty$. In \textbf{Case 1} and \textbf{Case 2}, for every $z \in K$, by using \eqref{eq:c1:l:infty} and exploiting the first order expansion of $\eta$, we have 
\begin{align*}
&|v_k(z)-w_k(z)|= \frac{|(\eta u_k)(\tilde{z}_k+r_k z) -\eta(\tilde{z}_k)u_k (\tilde{z}_k+r_k z) -( u_k \D \eta) (\tilde{z}_k) \cdot r_k z  |}{r_k^{1+\alpha} L_k}\\
&\le 
\frac{|u_k(\tilde{z}_k+r_k z)(\eta(\tilde{z}_k+r_k z)  -\eta(\tilde{z}_k) - \D \eta(\tilde{z}_k)\cdot r_k z)|}{r_k^{1+\alpha} L_k} + c
\frac{|\D \eta(\tilde{z}_k)| |u_k(\tilde{z}_k+r_k z)-u_k(\tilde{z}_k)|}{r_k^\alpha L_k} \\
& \le c
\frac{\|u_k\|_{L^\infty(B_{3/4}\setminus\Sigma_{\e_k})} r_k^{1-\alpha}}{L_k} + c \frac{\|u_k\|_{C^{0,\beta}(B_{3/4}\setminus\Sigma_{\e_k})} r_k^{\beta-\alpha}}{L_k} \le
 \frac{c r_k^{\beta-\alpha}}{L_k} \to 0,
\end{align*}
as $k\to\infty$, since we can choose $\beta \in (\alpha,1)$.
In \textbf{Case 3}, one has that 
\begin{align*}
&|v_k(z)-w_k(z)|= \frac{|\eta (\tilde{z}_k+r_k z) -\eta (\tilde{z}_k) | \cdot | u_k (\tilde{z}_k+r_k z)| }{r_k^{1+\alpha} L_k} \\
&\le \frac{c |u_k (\tilde{z}_k+r_k z)- u_k(z_k^0)| }{r_k^{\alpha}L_k}
\le \frac{c \|u_k\|_{C^{0,\beta}(B_{3/4}\setminus\Sigma_{\e_k})}|\tilde{z}_k+r_k z- z_k^0|^\beta }{r_k^{\alpha}L_k}
\le \frac{r_k^{\beta-\alpha}}{L_k}\to 0,
\end{align*}
where we have used the facts that $u_k(z_k^0) =0$ and $|\tilde{z}_k+r_k z- z_k^0|\le r_k|x| + r_k|y|+|y_k^0|\le c r_k + \e_k\le c r_k$ in \textbf{Case 3}.
Hence, the sequences $v_k$ and $w_k$ have the same asymptotic behaviour as $k\to\infty$ on every $K\subset \Omega_\infty$, which implies that $w_k \to \bar{v}$ uniformly on $K$.

\medskip
\noindent
\emph{Step 3.  $\D \bar{v}$ is not constant.}
Let us define the sequence of points
\[
\xi^1_k:=\frac{{z}_k-\tilde{z}_k}{r_k},\quad \xi^2_k:=\frac{\hat{z}_k-\tilde{z}_k}{r_k}.
\]
By using \eqref{eq:c1:alpha:points} we get
\begin{equation}\label{eq:grad:not:constant}
|\D v_k(\xi_k^1)-\D v_k(\xi_k^2)|=\frac{|\D (\eta u_k)({z}_k)-(\D\eta u_k)(\hat{z}_k)|}{r_k^\alpha L_k}\ge\frac{1}{2}>0.
\end{equation}
Arguing as Theorem \ref{teo:0alpha:e}, \emph{Step 4}, we  have that $\xi^1_k\to \xi_1$ and $ \xi^2_k\to \xi_2$ and $\xi^1\not=\xi^2$. Since $\D v_k\to \D\bar{v}$ uniformly on compact set by \emph{Step 2}, we can take the limit in 
\eqref{eq:grad:not:constant} to obtain $|\D\bar{v}({\xi}^1)- \D \bar{v}({\xi}^2)|>\delta_0/2$.

\medskip
\noindent
\emph{Step 4.  $r_k\to 0$ in} \textbf{Case 3}.  By contradiction let us suppose that $r_k\to \bar{r}>0$. Fixed $z \in \Omega_\infty$, we have that
\begin{align*}
&|\bar{v}(z)| = \Big| \lim_{k\to\infty} v_k(z)\Big| = \Big|\lim_{k\to\infty} \frac{
(\eta u_k)( \tilde{z}_k+r_kz) 
}{L_k r_k^{1+\alpha}} \Big| \le \frac{2\|u_k\|_{L^\infty(B_{3/4}\setminus\Sigma_{\e_k})}}{r_k^{1+\alpha }L_k} \le \frac{c}{L_k}\to 0,
\end{align*}
hence, $\bar{v} = 0$, which is a contradiction with \emph{Step 3}, where we have proved that $\D \bar{v}$ is not constant. Then, $r_k\to 0$. By arguing as in Theorem \ref{teo:0alpha:e}, \emph{Step 3}, we have that the limit domain $\Omega_\infty$ is defined by \eqref{eq:limit:domain}.

\medskip
\noindent
\emph{Step 5.  $\bar{v}$ satisfies a homogeneous Dirichlet boundary condition in} \textbf{Case 2} \emph{and} \textbf{Case 3}. First,
in \textbf{Case 3}, since $v_k=0$ on ${\partial \Sigma_{\e_k/r_k}}$ and $v_k\to\bar v$ uniformly on every compact set $K\subset \Omega_\infty$, one can conclude that $\bar{v}= 0$ on  $\partial \Sigma_{\bar{\e}}$. 

In \textbf{Case 2}, recalling that $\tilde{z}_k=z_k^0 \in \partial \Sigma_{\e_k}$, let us fix a boundary point $z \in{(\partial \Sigma_{\e_k}-{y}_k^0)}/{r_k}$ and denote by $z^\perp=(x,y^\perp)$ the projection of $z$ on the hyperplane $\Pi_k=\{ y\cdot {y}_k^0 = 0\}$. Recalling \eqref{eq:dom:exp} and observing that $|{y}_k^0 + r_k y| = | {y}_k^0 | = \e_k$, one has that
\begin{equation}\label{eq:c1:projection}
|y-y^\perp| = 
\Big| 
y\cdot \frac{ y_k^0 }{| y_k^0|} \Big| = \Big| \frac{|{y}_k^0 +r_k y| - |{y}_k^0|}{r_k} - 
y\cdot \frac{ y_k^0 }{| y_k^0|}
\Big|
\le c 
\frac{r_k}{\e_k}.
\end{equation}
Then, by using \eqref{eq:c1:projection}, \eqref{eq:stima:grad:bordo}, and noting that $\D (\eta u_k)({z}_k^0)\cdot y = \D (\eta u_k) ({z}_k^0)\cdot (y-y^\perp)$, we obtain
\begin{align*}
|v_k(z)| = \frac{|\D (\eta u_k)({z}_k^0)\cdot y|}{r_k^\alpha L_k}\le 
\frac{|\D(\eta u_k) ({z}_k^0)| }{r_k^\alpha  L_k}  |y-y^\perp| \le 
c\Big(\frac{\e_k}{r_k}\Big)^\alpha 
\frac{r_k}{\e_k}
 \to 0,
\end{align*}
as $k\to\infty$. Therefore, since $v_k\to\bar v$ uniformly on every compact set $K\subset \Omega_\infty$, we conclude that $\bar{v}= 0$ on $\partial \Pi$.

\medskip
\noindent
\emph{Step 6. $\bar{v}$ is an entire solution to a homogeneous equation with constant coefficients.} 
Let $A_k(z):=A(\tilde{z}_k+r_kz)$ and $\bar{z}:=\lim_{k\to\infty}\tilde{z}_k$. By using the $\alpha$-H\"older continuity of $A$, we can define $\bar{A}:=A(\bar{z})=\lim_{k\to\infty}A_k(z)$, which is a constant coefficients symmetric matrix satisfying \eqref{eq:unif:ell}.
Let us define
$$\rho_k(y):=\begin{cases}
\displaystyle{\frac{|y_k+r_k y|}{|y_k|}}, &\text{ in }\textbf{Case 1 }\text{and } \textbf{Case 2},\\
|y|, &\text{ in }\textbf{Case 3},\\
\end{cases}$$
and fix $\phi\in C_c^\infty(\Omega_\infty)$. Since $u_k$ is a solution to \eqref{eq:sol:c0:k}, we have that

\begin{align}\label{eq:vk:c1:limit}
\begin{aligned}
&\int_{\supp(\phi)}\rho_k^a(y) A_k (z)\D w_k(z)\cdot \D \phi (z) dz
= \frac{\eta(\tilde{z}_k) r_k^{1-\alpha}}{L_k} \int_{\supp(\phi)}\rho_k^a(y) f(\tilde{z}_k+r_k z)\phi(z)dz\\
&- \frac{\eta(\tilde{z}_k) r_k^{-\alpha}}{L_k} \int_{\supp(\phi)}\rho_k^a(y) F(\tilde{z}_k+r_k z)\cdot \D\phi(z)dz
-
\frac{r_k^{-\alpha}}{L_k} \int_{\supp(\phi)}\rho_k^a(y) A_k(z)P_k\cdot \D\phi(z)dz\\&= \text{I}+\text{II}+\text{III},
 \end{aligned}
\end{align}
where we have set
$$P_k:=\begin{cases}
 (\eta \D u_k)(\tilde z_k),&\text{ in } \textbf{Case 1 }\text{and } \textbf{Case 2},\\
0,&\text{ in } \textbf{Case 3}.
\end{cases}$$

We want to show that the right hand side vanishes as $k\to\infty$. 
The term \text{I} vanishes exactly as in the Theorem \ref{teo:0alpha:e}, \emph{Step 5}, by using the integrability assumption $f\in L^{p,a}(B_1)$, with $p>d$ and $\alpha\in (0,1-{d}/{p}]$ by \eqref{eq:alpha:1}.

\smallskip

Next, by using the divergence theorem, we have
\begin{align}\label{eq:div:F:radal}
\begin{split}
&\Big|\int_{\supp(\phi)}\rho_k^a(y) F(\tilde{z}_k+r_k z)\cdot \D\phi(z)dz\Big|\\
&\le \int_{\supp(\phi)}\rho_k^a(y) |F(\tilde{z}_k+r_k z)-F(\tilde{z}_k)|| \D\phi(z)|dz
+
\Big|\int_{\supp(\phi)}\rho_k^a(y) F(\tilde{z}_k)\cdot \D\phi(z)dz\Big|\\
&\le
c \|F\|_{C^{0,\alpha}(B_1)}r_k^\alpha + \Big|\int_{\supp(\phi)}\D \rho_k^a(y)\cdot F(\tilde{z}_k)\phi(z)dz\Big|\\
&\le cr_k^\alpha + \Big|\int_{\supp(\phi)}\D \rho_k^a(y)\cdot F(\tilde{z}_k)\phi(z)dz\Big|.
\end{split}
\end{align}
In \textbf{Case 3}, since $F(x,0)\cdot e_{y_i} = 0 $, one has that 
 $$\D|y|^a \cdot F(\tilde{z}_k)=a|y|^{a-2} F(x_k,0)\cdot y =0,$$ for every $z\in \supp(\phi)$, so the second term in \eqref{eq:div:F:radal} is zero and 
 \[
|\text{II}|\le \frac{ c }{L_k} 
\to 0,\quad \text{as }k \to\infty.
\]
Instead, in \textbf{Case 1} and \textbf{Case 2}, since $|\tilde y_k+r_k y|\ge |\tilde y_k|/2$ for $y\in\supp(\phi)$, we have that
\begin{align}\label{eq:II:term:case1}
\begin{split}
\big|\D\rho_k^a(y)\cdot F(\tilde{z}_k)\big|&=\Big|\
\frac{a r_k\rho_k^a(y) (\tilde y_k+r_k y)}{|\tilde y_k+r_ky|^2}\cdot \Big(F(\tilde x_k,\tilde y_k)-F(\tilde x_k,0)\Big)\Big|\\
&\le \frac{c r_k [F]_{C^{0,\alpha}(B_1)} |\tilde{y}_k|^\alpha}{|\tilde y_k|} \le cr_k |\tilde y_k|^{\alpha-1}.
\end{split}
\end{align}
Thus, by using \eqref{eq:div:F:radal} and \eqref{eq:II:term:case1}, one has that
\[
|\text{II}|\le \frac{c}{L_k} + \frac{ c r_k^{-\alpha}}{L_k} r_k|\tilde{y}_k|^{\alpha-1}\le \frac{c}{L_k}\Big(1+ \Big(\frac{r_k}{|\tilde{y}_k|}\Big)^{1-\alpha} \Big) \le cL_k^{-1}
\to 0,\quad \text{as }k \to\infty,
\]
in \textbf{Case 1} and \textbf{Case 2}, since $r_k/|\tilde{y}_k|\le c$.

\smallskip

Finally, we prove that the third member of \eqref{eq:vk:c1:limit} goes to zero as $k\to\infty$. In \textbf{Case 3}, $P_k=0$ so $\text{III}=0$. Instead, in \textbf{Case 1} and \textbf{Case 2}, one has
\begin{align}\label{eq:III:vanishes}
\begin{split}
|\text{III}|&
\le
\Big|
\frac{r_k^{-\alpha}}{L_k} \int_{\supp(\phi)}\rho_k^a(y) A(\tilde{z}_k)(\eta \D u_k)(\tilde{z}_k)\cdot \D\phi(z)dz
\Big|\\
& + \Big|
\frac{r_k^{-\alpha}}{L_k} \int_{\supp(\phi)}\rho_k^a(y) (A(\tilde z_k+r_kz)-A(\tilde{z}_k))(\eta \D u_k)(\tilde{z}_k)\cdot \D\phi(z)dz
\Big|.
\end{split}
\end{align}

First, we show that the first member in \eqref{eq:III:vanishes} vanishes. Recall that $r_k/|y_k|\to0$, $\rho_k^a\to 1$ and $(x_k,y_k^0)$ is the projection of $z_k$ on $\partial\Sigma_{\e_k}$. By using \eqref{eq:c1:l:infty} and \eqref{eq:stima:grad:bordo} we get 
\begin{align}\label{eq:III:i}
\begin{split}
&|(\eta \D u_k) (\tilde{z}_k))| \le  |\D (\eta u_k) (\tilde x_k, \tilde y_k)| + |(u_k \D \eta) (\tilde x_k, \tilde y_k)|\\
&\le 
|\D (\eta u_k)(\tilde x_k, \tilde y_k)-\D (\eta u_k)(x_k,y_k^0)|
+|\D (\eta u_k)(x_k,y_k^0)|\\
& + |\D \eta (\tilde x_k, \tilde y_k) (u_k(\tilde x_k, \tilde y_k) - u_k( x_k, y_k^0) )|\\
&\le 
cL_k |\tilde y_k-y_k^0|^\alpha+ cL_k \e_k^\alpha + c|\tilde{y}_k -y_k^0|^{\alpha}
\le c L_k |\tilde y_k|^\alpha.
\end{split}
\end{align}
On the other hand, since $|\tilde y_k+r_k y|\ge |\tilde y_k|/2$ for $y\in\supp(\phi)$, it follows
\begin{equation}\label{eq:III:ii}
|\D\rho_k^a(y)|=\Big|
\frac{a r_k\rho_k^a(y) (\tilde y_k+r_k y)}{|\tilde y_k+r_ky|^2}
\Big|
\le c\frac{r_k}{|\tilde y_k|}\rho_k^a(y).
\end{equation}
hence, by combining \eqref{eq:III:i} and \eqref{eq:III:ii}, and using the divergence theorem, it follows that
\begin{align*}
\Big|
\frac{r_k^{-\alpha}}{L_k} \int_{\supp(\phi)}\D \rho_k^a(y) \cdot A(\tilde{z}_k)(\eta \D  u_k)(\tilde{z}_k)\phi(z)dz
\Big|\le c\Big(\frac{r_k}{|\tilde{y}_k|}\Big)^{1-\alpha} \to 0,
\end{align*}
that is, the first member in \eqref{eq:III:vanishes} vanishes. Next, we show that the second member vanishes as $k\to\infty$. In this case, we need to reason in two steps (as done in \cite{SirTerVit21a}*{Remark 5.3} and \cite{AFV24}*{Theorem 7.1}): first, we prove uniform estimates in $C^{1,\alpha'}$ space for some suboptimal $\alpha' \in (0,\alpha)$; then, by using these estimates, we conclude the optimal regularity with exponent $\alpha$. Let us fix $\alpha' \in (0,\alpha)$. By using interpolation inequality in H\"older spaces \cite{GT01}*{Lemma 6.35}, we can estimate the second term of \eqref{eq:III:vanishes} as follows
\begin{align*}
&\Big|
\frac{r_k^{-\alpha'}}{L_k} \int_{\supp(\phi)}\rho_k^a(y) (A(\tilde z_k+r_kz)-A(\tilde{z}_k))(\eta \D  u_k)(\tilde{z}_k)\cdot \D\phi(z)dz
\Big|\\
& \le \frac{c r_k^{\alpha - \alpha'} \|(\eta \D  u_k)\|_{L^\infty(B_{3/4}\setminus\Sigma_{\e_k})} }{L_k} 
\le 
 \frac{c r_k^{\alpha - \alpha'}  }{L_k} \big(
\|u_k \D\eta  \|_{L^\infty(B_{3/4}\setminus\Sigma_{\e_k})}
+
\|\D (\eta u_k)  \|_{L^\infty(B_{3/4}\setminus\Sigma_{\e_k})} 
 \big) \\
&\le  \frac{c r_k^{\alpha - \alpha'}  ( \|  u_k \|_{L^\infty(B_{3/4}\setminus\Sigma_{\e_k})} + [ \D(\eta u_k) ]_{C^{0,\alpha}(B_{3/4}\setminus\Sigma_{\e_k})}) }{L_k} \le c r_k^{\alpha - \alpha'} \to 0,
\end{align*}
since $\alpha'<\alpha$. If we have uniform estimates in $C^{1,\alpha'}$ space, then $\|\eta \D u_k\|_{ {L^\infty(B_{3/4}\setminus\Sigma_{\e_k})} } \le c$. Hence, restarting the proof with the optimal $\alpha$ and the additional information above, in the previous computation we get
\begin{align*}
&\Big|
\frac{r_k^{-\alpha}}{L_k} \int_{\supp(\phi)}\rho_k^a(y) (A(\tilde z_k+r_kz)-A(\tilde{z}_k))(\eta \D  u_k)(\tilde{z}_k)\cdot \D\phi(z)dz
\Big|\\
& \le \frac{c  \|(\eta \D  u_k)\|_{L^\infty(B_{3/4}\setminus\Sigma_{\e_k})} }{L_k} \le \frac{c  }{L_k} \to 0.
\end{align*}
Combining all the previous results, we conclude that the right-hand side of \eqref{eq:vk:c1:limit} vanishes as $k \to \infty$.

\smallskip

Finally, by the same considerations of Theorem \ref{teo:0alpha:e}, we obtain that the left hand side of \eqref{eq:vk:c1:limit} converges in the following sense
\begin{equation*}
\int_{\supp(\phi)}\rho_k^a A_k \D w_k\cdot \D {\phi}dz\to \int_{\supp(\phi)}\bar{\rho}^a \bar{A}\D \bar{v}\cdot \D \phi dz, 
\end{equation*}
where $$\bar\rho(y):=\begin{cases}
1,&\text{ in }\textbf{Case 1 }\text{and } \textbf{Case 2},\\
|y|,&\text{ in }\textbf{Case 3},
\end{cases}$$
and $\bar{v}\in H^{1}_\loc(\Omega_\infty,\bar{\rho}^a(y)dz)$.

\smallskip

Then, recalling who is $\Omega_\infty$, see \eqref{eq:limit:domain}, and using the \emph{Step 5} for the Dirichlet boundary condition, we conclude that
\begin{itemize}[left=0pt]
\item in \textbf{Case 1}, $\bar{v}$ is an entire solution to
$$-\div(\bar{A}\D\bar{v})=0,\quad\text{in }\R^d,$$

\item in \textbf{Case 2}, $\bar{v}$ is an entire solution to

$$\begin{cases}
-\div(\bar{A}\D\bar{v})=0, & \text{in }\Pi ,\\
\bar{v}=0, & \text{on } \partial\Pi,,
\end{cases}$$
\item in \textbf{Case 3}, $\bar{v}$ is an entire solution to

$$\begin{cases}
-\div(|y|^a\bar{A}\D\bar{v})=0, & \text{in }\R^d\setminus\Sigma_{\bar{\e}},\\
\bar{v}=0, & \text{on } \partial\Sigma_{\bar{\e}},
\end{cases}$$
\end{itemize}

\medskip

\noindent
\emph{Step 7. Liouville Theorems and conclusion.}
Since $\bar{v}$ satisfies the growth condition \eqref{eq:eps:growth:1} with $1+\alpha \in (0,2) \cap (0,2-a-n)$, invoking the classical Liouville Theorem in \textbf{Case 1} and \textbf{Case 2} shows that $\bar{v}$ must be a linear function. In contrast, applying the Liouville Theorem \ref{teo:liouville} in \textbf{Case 3} implies that $\bar{v}$ must be zero.

This contradicts \emph{Step 3}, as $\D\bar{v}$ is not constant. Consequently, $L_k = [\D (\eta u_k)]_{C^{0,\alpha}(B_1 \setminus \Sigma_{\e_k})}$ must be bounded, which implies that $\|\eta u_\e\|_{C^{1,\alpha}(B_1 \setminus \Sigma_\e)} \le c$. Thus, \eqref{eq:stima:c1:e} holds true.

Furthermore, recalling \eqref{eq:stima:grad:bordo} and using \eqref{eq:stima:c1:e}, we conclude that \eqref{eq:formal:conormal:eps} follows. This completes the proof.
\end{proof}

\begin{oss}\label{R:F:radial}

The homogeneous Dirichlet condition on \(\partial \Sigma_\varepsilon\) is too restrictive to handle all possible fields \(F\), thus preventing the validity of \(\varepsilon\)-uniform \(C^{1,\alpha}\) estimates in the general case.

Let us suppose that $n=d=2$, $-2<a<-1$ and consider the function $u(y)=u(y_1,y_2):=y_1+y_2$ which is a weak solution to 
\[
\begin{cases}
-\div(|y|^a\D u)=\div(|y|^a F), & \text{in }B_1\setminus\Sigma_{{0}},\\
u=0, & \text{on } \Sigma_{0}\cap B_1.
\end{cases}
\]
where $F:=-\D u =(-1,-1)$ .
By applying Lemma \ref{eq:lemm:approximation:e} we find that there exists a family $u_\e$ which are weak solutions to 
\[
\begin{cases}
-\div(|y|^a\D u_\e)=\div(|y|^a F), & \text{in }B_{3/4}\setminus\Sigma_{{\e}},\\
u=0, & \text{on } \Sigma_{\e}\cap B_{3/4},
\end{cases}
\]
and satisfies $\|u_\e\|_{H^{1,a}(B_{3/4}\setminus\Sigma_\e)}\le c$ and $u_\e \to u$ in ${H^{1,a}(B_{3/4})}$. If Theorem \ref{teo:1alpha:e} holds true for this equation, we  obtain that \eqref{eq:formal:conormal:eps} works (since it depends only on the Dirichlet boundary condition satisfied by $u_\e$), that is
\[
|\D u_\e|\le c \e^\alpha,\quad \text{ on }\partial \Sigma_\e\cap B_{1/2}, 
\]
and then, taking the limit as $\e\to 0$ in a suitable sense (see the proof of the Theorem \ref{teo:1}), we deduce that $\D u =0$ on $\Sigma_0\cap B_{1/2}$, which contradicts $\D u = (1,1)$.

\end{oss}

The next proposition provides \emph{a priori} estimates for solutions that additionally satisfy an extra boundary condition on the lower dimensional boundary $\Sigma_0$.

\begin{pro}\label{P:a:priori}
Let $2\le n\le d$, $a+n\in(0,1)$, $p>{d}$ and $\alpha$ satisfying \eqref{eq:alpha:1}.
Let $A$ be a $\alpha$-H\"older continuous matrix satisfying \eqref{eq:unif:ell} and $\|A\|_{C^{0,\alpha}{(B_1)}}\le L$, $f\in L^{p,a}(B_1)$ and $F\in C^{0,\alpha}(B_1)$. Let $u \in C^{1,\alpha}(B_{1})$ be a weak solution to 
\[
\begin{cases}
-\div(|y|^a A \nabla u)= |y|^a f+\div(|y|^aF), & \text{in }B_1\setminus \Sigma_{0},\\
u=0, & \text{on } \Sigma_0\cap B_1,
\end{cases}
\]
such that $u$ satisfies the boundary condition \begin{equation}\label{eq:BC:conormal:a:priori}
\begin{cases}
\D_x u(x,0)=0, \\
(A\D u +F) (x,0) \cdot e_{y_i}= 0,
\end{cases} \text{ for every } (x,0)\in \Sigma_0\cap B_{1}, \text{ and } i=1,\dots,n.
\end{equation} 
Then, there exists a constant $c>0$, depending only on $d$, $n$, $a$, $\lambda$, $\Lambda$, $p$, $\alpha$ and $L$ such that 
\begin{equation}\label{eq:c1:a:priori}
\|u\|_{C^{1,\alpha}(B_{1/2})}  \le c\big(
\|u\|_{L^{2,a}(B_{1})}
+\|f\|_{L^{p,a}(B_{1})}
+\|F\|_{C^{0,\alpha}(B_1)}
\big).
\end{equation}
\end{pro}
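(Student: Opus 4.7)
The proof is a blow-up contradiction in the spirit of Theorem \ref{teo:1alpha:e}, performed directly at $\e = 0$ and exploiting the additional conormal condition \eqref{eq:BC:conormal:a:priori}. If \eqref{eq:c1:a:priori} failed, we would find sequences $\{u_k\}$, $\{A_k\}$, $\{f_k\}$, $\{F_k\}$ as in the hypotheses with $\|u_k\|_{L^{2,a}(B_1)} + \|f_k\|_{L^{p,a}(B_1)} + \|F_k\|_{C^{0,\alpha}(B_1)} \le 1$ but $\|u_k\|_{C^{1,\alpha}(B_{1/2})} \to \infty$. By Theorem \ref{teo:0}, the $C^{0,\beta}$ norms of $u_k$ on $B_{3/4}$ are bounded for every $\beta < 1$, so the blow-up must come from $L_k := [\D(\eta u_k)]_{C^{0,\alpha}(B_1)} \to \infty$ for a fixed cut-off $\eta$. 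Picking $z_k = (x_k, y_k)$, $\hat z_k \in B_1 \setminus \Sigma_0$ nearly realizing this seminorm and setting $r_k := |z_k - \hat z_k| \to 0$, two configurations arise: (A) $|y_k|/r_k \to \infty$ and (B) $|y_k|/r_k$ bounded.

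\textbf{Blow-up and convergence.} Set $\tilde{z}_k := z_k$ in case (A), $\tilde{z}_k := (x_k, 0) \in \Sigma_0$ in case (B), $P_k := \D u_k(\tilde{z}_k)$, and define
\[
v_k(z) := \frac{u_k(\tilde{z}_k + r_k z) - u_k(\tilde{z}_k) - r_k P_k \cdot z}{L_k\, r_k^{1+\alpha}}, \qquad z \in \Omega_k := \frac{B_1 \setminus \Sigma_0 - \tilde{z}_k}{r_k}.
\]
In case (B), the conormal condition \eqref{eq:BC:conormal:a:priori} gives $u_k(\tilde{z}_k) = 0$ and $\D_x u_k(\tilde{z}_k) = 0$, so $v_k$ vanishes identically on $\Sigma_0 \cap \Omega_k$. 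The uniform bound $[\D v_k]_{C^{0,\alpha}(\Omega_k)} \le 1$, combined with $v_k(0) = \D v_k(0) = 0$ in case (A) and the vanishing on $\Sigma_0$ plus interpolation in case (B), yields uniform $C^{1,\alpha}$ bounds on compact subsets of the limit domain $\Omega_\infty$, which is $\R^d$ in case (A) and $\R^d \setminus \Sigma_0$ in case (B). Arzel\`a--Ascoli produces $v_k \to \bar v$ in $C^{1,\gamma}_{\loc}(\Omega_\infty)$ for every $\gamma \in (0, \alpha)$, with $\bar v = 0$ on $\Sigma_0$ in case (B); tracking the image points of $z_k$ and $\hat z_k$ shows that $\D \bar v$ is nonconstant, while $|\bar v(z)| \le c(1 + |z|^{1+\alpha})$.

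\textbf{Limit equation and role of \eqref{eq:BC:conormal:a:priori}.} Testing the equation for $u_k$ against $\phi \in C_c^\infty(\Omega_\infty)$ and rescaling gives
\[
\int \rho_k^a\, A_k\, \D v_k \cdot \D \phi \,dz = \frac{r_k^{1-\alpha}}{L_k} \int \rho_k^a f_k \phi \,dz - \frac{1}{L_k r_k^\alpha} \int \rho_k^a \bigl[F_k + A_k P_k\bigr] \cdot \D \phi \,dz,
\]
with $A_k(z) := A(\tilde{z}_k + r_k z)$, and $\rho_k(y) := |\tilde{y}_k + r_k y| / |\tilde{y}_k|$ in case (A), $\rho_k(y) := |y|$ in case (B). The $f$--term vanishes via $\alpha \le 1 - d/p$, as in Theorem \ref{teo:1alpha:e}. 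Splitting $F_k + A_k P_k = [F_k - F(\tilde{z}_k)] + [A_k - A(\tilde{z}_k)] P_k + Q_k$ with $Q_k := F(\tilde{z}_k) + A(\tilde{z}_k) P_k$, the two oscillation pieces produce $O(r_k^\alpha)$ contributions that vanish after the standard suboptimal/optimal bootstrap bounding $\|\D u_k\|_{L^\infty}$. For the constant part, integration by parts yields
\[
-\frac{1}{L_k r_k^\alpha} \int \rho_k^a Q_k \cdot \D \phi \,dz = \frac{1}{L_k r_k^\alpha} \int \D \rho_k^a(y) \cdot Q_k\, \phi \,dz.
\]
In case (B), $\D \rho_k^a(y) = a |y|^{a-2} y$, and \eqref{eq:BC:conormal:a:priori} at $\tilde{z}_k = (x_k, 0)$ gives $Q_k \cdot e_{y_i} = 0$ for every $i = 1, \ldots, n$; hence $y \cdot Q_k \equiv 0$ and the integrand vanishes pointwise. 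In case (A), $|\D \rho_k^a|$ is of order $(r_k / |\tilde{y}_k|) \rho_k^a$, which is small, while $|P_k|$ is bounded a priori, so the term still vanishes.

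\textbf{Liouville and conclusion.} Passing to the limit, $\bar v$ is an entire solution to $-\div(\bar A \D \bar v) = 0$ on $\R^d$ in case (A), and to $-\div(|y|^a \bar A \D \bar v) = 0$ on $\R^d \setminus \Sigma_0$ with $\bar v = 0$ on $\Sigma_0$ in case (B), satisfying $|\bar v(z)| \le c(1 + |z|^{1+\alpha})$ with $1 + \alpha < 2 - a - n$ by \eqref{eq:alpha:1}. The classical Liouville theorem in case (A) forces $\bar v$ to be affine, contradicting $\D \bar v$ being nonconstant; Theorem \ref{teo:liouville} in case (B) forces $\bar v \equiv 0$, again a contradiction. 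Hence $L_k$ is bounded and \eqref{eq:c1:a:priori} follows. The main obstacle is the handling of the constant vector $Q_k$ in the boundary case: the conormal condition \eqref{eq:BC:conormal:a:priori} is tailor-made to annihilate $Q_k$ against the radial derivative $a|y|^{a-2}y$ of the weight, bypassing exactly the obstruction illustrated in Remark \ref{R:F:radial}.
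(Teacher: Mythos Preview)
Your overall architecture coincides with the paper's: a blow-up contradiction at $\e=0$, two cases according to $|y_k|/r_k$, and the use of \eqref{eq:BC:conormal:a:priori} to kill the constant vector $Q_k$ after integrating by parts. Case (B) is handled correctly and matches the paper's Case 2.

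There is, however, a genuine gap in Case (A). Your claim that ``$|P_k|$ is bounded a priori, so the term still vanishes'' is not sufficient. After the bootstrap you have $|Q_k|\le C$, and with $|\D\rho_k^a|\le c\,(r_k/|\tilde y_k|)\rho_k^a$ the term to control is
\[
\frac{1}{L_k r_k^{\alpha}}\int \D\rho_k^a\cdot Q_k\,\phi\,dz \;=\; O\!\Big(\frac{1}{L_k}\cdot\frac{r_k^{1-\alpha}}{|y_k|}\Big).
\]
But the contradiction hypothesis imposes no relation among $L_k$, $r_k$ and $|y_k|$ beyond $|y_k|/r_k\to\infty$ and $L_k r_k^{\alpha}\le C$; for instance with $L_k=k$, $r_k=e^{-k}$, $|y_k|=e^{-k/2}$ and any $\alpha>1/2$ the right-hand side diverges. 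The paper closes this gap by using \eqref{eq:BC:conormal:a:priori} \emph{also in Case 1}: since $(A_k\D u_k+F_k)(x_k,0)\cdot e_{y_i}=0$, the radial component of $Q_k$ satisfies
\[
\Big|\,Q_k\cdot\frac{y_k+r_ky}{|y_k+r_ky|}\,\Big|\le [A_k\D u_k+F_k]_{C^{0,\alpha}}\,|y_k|^{\alpha}\le cL_k|y_k|^{\alpha},
\]
which combines with $|\D\rho_k^a|\le c\,(r_k/|y_k|)$ to give $(r_k/|y_k|)^{1-\alpha}\to0$, independently of $L_k$. In short, the boundary condition must be invoked at the foot point $(x_k,0)$ in the interior case as well; boundedness of $Q_k$ alone is not enough.

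A minor point: you define $L_k$ via $\eta u_k$ but the blow-up $v_k$ via $u_k$; this costs you the clean bound $[\D v_k]_{C^{0,\alpha}}\le 1$ (you only get $[\D u_k]_{C^{0,\alpha}}/L_k$, which need not be $\le 1$). The paper keeps $\eta u_k$ throughout and introduces an auxiliary sequence $w_k$ to pass to the equation, showing $v_k-w_k\to0$.
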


\begin{proof}
The proof is quite similar to the one of Theorem \ref{teo:1alpha:e}, so we avoid some details. Without loss of generality, let us suppose that 
\[
\|u\|_{L^{2,a}(B_{1})}
+\|f\|_{L^{p,a}(B_{1})}
+\|F\|_{C^{0,\alpha}(B_1)} \le c.
\]
By contradiction let us suppose that \eqref{eq:c1:a:priori} doesn't holds, hence, there exist $p>d$, $\alpha$ satisfying \eqref{eq:alpha:1}, $\{A_k\}_k$, $\{f_k\}_k$, $\{F_k\}_k$, $\{u_k\}_k$, such that 
$u_k$ is solution to
\[
\begin{cases}
-\div(|y|^a A_k \nabla u_k)= |y|^a f_k+\div(|y|^aF_k), & \text{in }B_1\setminus \Sigma_{0},\\
u_k=0, & \text{on } \Sigma_0\cap B_1,
\end{cases}
\]
satisfies the boundary condition \eqref{eq:BC:conormal:a:priori} in $\Sigma_0\cap B_1$ and 
\[
\|u_k\|_{C^{1,\alpha}(B_{1/2})}\to  \infty.
\]
Let us fix a smooth cut-off function $\eta \in C_c^\infty(B_1)$ such that
\[
\supp(\phi)\subset B_{3/4},
\quad 
\eta=1 \text{ in }B_{1/2},
\quad
0\le\eta\le1, 
\]
hence, we have
\begin{equation*}
L_k:=[ \D (\eta u_k)]_{C^{0,\alpha}(B_{1})} \to \infty.
\end{equation*}
By definition of H\"older seminorm, take two sequences of points $z_k=(x_k,y_k),\hat{z}_k=(\hat x_k,\hat y_k)\in B_{1}$ such that
\begin{equation*}
\frac{|\D(\eta u_k)(z_k)-\D(\eta u_k)(\hat z_k)|}{|z_k-\hat z_k|^\alpha}\ge\frac{L_k}{2},
\end{equation*}
and define $r_k:=|z_k-\hat z_k|$. Now we distinguish two cases. 

   \medskip

$\bullet$ \textbf{Case 1:} $ 
    \displaystyle{
    \frac{|y_k|}{r_k}\to\infty,}\quad$ $\bullet$   \textbf{Case 2:}
    $ 
    \displaystyle{
    \frac{|y_k|}{r_k}\le c, }$
    
    \medskip
    \noindent
for some $c>0$ which not depends on $k$.
Let us define 
\[
\tilde z_k=(\tilde x_k,\tilde y_k):=\begin{cases}
(x_k,y_k), & \text{ in }\textbf{Case 1},\\
(x_k,0), & \text{ in }\textbf{Case 2},
\end{cases}
\]
the sequence of domains 
$$\Omega_ k:=\frac{B_{1}\setminus\Sigma_0-\tilde{z}_k}{r_k},$$
the sequences of functions
\begin{align*}
&v_k(z):= \frac{
(\eta u_k)( \tilde{z}_k+r_kz) -(\eta u_k )(\tilde{z}_k) - \D(\eta u_k)(\tilde{z}_k)\cdot r_k z
}{L_k r_k^{1+\alpha}},\\
&w_k(z):=\frac{\eta(\tilde{z}_k) u_k( \tilde{z}_k+r_kz) -  (\eta u_k) (\tilde{z}_k) - (\eta \D  u_k) (\tilde{z}_k)\cdot r_k z}{L_k r_k^{1+\alpha}},
\end{align*}
for $z \in \Omega_k$ and set $\Omega_\infty$ as in \eqref{eq:omega:infty}.

\smallskip

By following the argument in Theorem \ref{teo:1alpha:e}, \emph{Step 2}, we have that $\|v_k\|_{C^{1,\alpha}(K)} \le 1$ for every compact subset $K \subset \Omega_\infty$. Therefore, we can apply the Arzelà-Ascoli Theorem to conclude that $v_k \to \bar{v}$ in $C^{1,\gamma}(K)$ for every $\gamma \in (0,\alpha)$, and that
$|\bar{v}(z)| \le c(1 + |z|^{1+\alpha})$, for a.e. $z \in \Omega_\infty$.
Moreover, it follows that $w_k \to \bar{v}$ as well.

Next, by the same reasoning used in Theorem \ref{teo:1alpha:e}, \emph{Step 3}, we obtain that $\D \bar{v}$ is not constant. Continuing as in \emph{Step 4} of Theorem \ref{teo:1alpha:e}, we conclude that $r_k \to 0$, which implies that the limit domain $\Omega_\infty$ is given by
\[
\Omega_\infty:=\begin{cases}
\R^d, & \text{in }\textbf{Case 1},\\
\R^d\setminus\Sigma_0, & \text{in }\textbf{Case 2}.
\end{cases}
\]

Eventually, we prove that $\bar{v}$ is an entire solution to a homogeneous equation with constant coefficients and we reach a contradiction by invoking a Liouville type Theorem.
Since $\|A_k\|_{C^{0,\alpha}(B_1)}\le L$, we have that $A_k(\tilde{z}_k+r_k z)\to A(\bar{z}):=\bar{A}$, which is a constant matrix satisfying \eqref{eq:unif:ell}.
Let us define
\[
\rho_k(y):=\begin{cases}
\displaystyle{\frac{|{y}_k+r_k y|}{|y_k|}}, & \text{ in } \textbf{Case 1},\\
|y|, &  \text{ in } \textbf{Case 2}.
\end{cases}
\]
and fix $\phi\in C_c^\infty(\Omega_\infty)$. A straightforward computation show us that
\begin{align*}
&\int_{\supp(\phi)}\rho_k^a(y)A_k(\tilde{z}_k+r_k z)\D w_k(z)\cdot \D \phi (z)
dz
=
 \frac{\eta (\tilde{z}_k)r_k^{1-\alpha}}{L_k} \int_{\supp(\phi)}\rho_k^a(y) f_k(\tilde{z}_k+r_k z)\phi(z)dz
 \\
&- \frac{\eta (\tilde{z}_k) r_k^{-\alpha}}{L_k} \int_{\supp(\phi)}\rho_k^a(y) \big(
F_k(\tilde{z}_k+r_k z)-F_k(\tilde{z}_k) \big) \cdot\D \phi (z)dz\\
&- \frac{r_k^{-\alpha}}{L_k} \int_{\supp(\phi)}\rho_k^a(y)
\big(A_k(\tilde{z}_k+r_k z)-A_k(\tilde{z}_k)\big)(\eta \D u_k)(\tilde{z}_k)\cdot\D \phi(z)dz\\
& - \frac{r_k^{-\alpha}}{L_k}\rho_k^a(y)
\big(
\eta A_k\D  u_k + \eta F_k
\big) (\tilde{z}_k)\cdot\D\phi(z) dz =\text{I}+\text{II}+\text{III}+\text{IV}.
\end{align*}
The terms I, II, III vanishes as $k\to\infty$ exactly as in Theorem \ref{teo:1alpha:e}, \emph{Step 6}.

\smallskip

Next, we show that the fourth member goes to zero. By using the divergence theorem, we get 
\[
\int_{\supp(\phi)}\rho_k^a(y)
\big(\eta 
A_k\D  u_k + \eta F_k
\big) (\tilde{z}_k)\cdot\D\phi(z) dz = \int_{\supp(\phi)} \D \rho_k^a(y)
\cdot \big(\eta
A_k\D  u_k + \eta F_k
\big) (\tilde{z}_k)\phi(z) dz.
\]
In \textbf{Case 2}, 
since, $\tilde{z}_k=(x_k,0) \in\Sigma_0$, $u_k$ satisfies the boundary condition \eqref{eq:BC:conormal:a:priori} we have that 
$$\eta(\tilde{z}_k) (A_k\D u_k + F_k)(\tilde{z}_k)\cdot e_{y_i} =0, \quad\text{for every }i=1,\dots,n,$$ 
so IV$=0$.

Let us consider the \textbf{Case 1}, recalling that $\tilde{z}_k=z_k$, $r_k/|y_k|\to0$ and $\rho_k^a\to 1$.
Arguing as Theorem \ref{teo:1alpha:e}, \emph{Step 6}, we have that
$$
|\D \rho_k^a(y)|\le c\frac{r_k}{|y_k|}\rho_k^a(y),
$$
and by using the boundary condition \eqref{eq:BC:conormal:a:priori}
\begin{align*}
&\Big|(\eta A_k\D  u_k+\eta F_k)({z}_k)\cdot \frac{y_k+r_k y}{|y_k+r_k y|}\Big|
\\
&
\le \Big|\big((\eta A_k\D  u_k+\eta F_k)({z}_k)-(\eta A_k\D  u_k+\eta F_k)(x_k,0)\big)\cdot \frac{y_k+r_k y}{|y_k+r_k y|}\Big|\\
&\le [\eta A_k\D  u_k+\eta F_k]_{C^{0,\alpha}(B_{3/4})}|y_k|^\alpha
\le c L_k |y_k|^\alpha.
\end{align*}
Hence, combining these two inequalities we obtain
\[
\Big |\D \rho_k^a(y) \cdot (\eta A_k\D  u_k+\eta F_k)({z}_k)
\Big| \le c \frac{r_k}{|y_k|^{1-\alpha}}L_k,
\]
so
\begin{align*}
|\text{IV}|= \frac{r_k^{-\alpha}}{L_k} 
\Big| \int_{\supp(\phi)}\D\rho_k^a(y)\cdot \big(\eta A_k\D u_k +\eta F_k
\big) ({z}_k) \phi(z)  dz \Big|\le c\Big(\frac{r_k}{|y_k|}\Big)^{1-\alpha}\to0,
\end{align*}
as $k\to\infty$. 

On the other hand, arguing as in the \emph{Step 5} of the Theorem \ref{teo:0alpha:e}, we have that 
\begin{equation*}
\int_{\supp(\phi)}\rho_k^a(y) A_k(\tilde{z}_k+r_k z)\D w_k (z)\cdot \D {\phi}(z)dz\to \int_{\supp(\phi)}\bar{\rho}^a \bar{A}\D \bar{v}\cdot \D \phi dz,
\end{equation*}
where 
\[
\bar{\rho}(y):=\begin{cases}
1, & \text{ in }\textbf{Case 1},\\
|y|, & \text{ in }\textbf{Case 2}.
\end{cases}
\]
Then, we have that
\begin{itemize}[left=0pt]

\item in \textbf{Case 1}, $\bar{v}$ is an entire solution to
$$-\div(\bar{A}\D\bar{v})=0,\quad\text{in }\R^d,$$

\item in \textbf{Case 2}, $\bar{v}$ is an entire solution to

$$\begin{cases}
-\div(|y|^a\bar{A}\D\bar{v})=0, & \text{in }\R^d\setminus\Sigma_{{0}},\\
\bar{v}=0, & \text{on } \Sigma_{{0}},
\end{cases}.$$

\end{itemize}
where $\bar{A}$ is symmetric constant matrix satisfying \eqref{eq:unif:ell}.
From this point on, as in Theorem \ref{teo:1alpha:e}, \emph{Step 7}, by invoking appropriate Liouville type Theorems we get a contradiction and the thesis follows.
\end{proof}

\begin{proof}[Proof of the Theorems \ref{teo:1}]

As in the proof of Theorem \ref{teo:0}, without loss of generality, we may consider the function $u - \psi$ and provide the proof of the theorem for solutions to \eqref{eq:1} with homogeneous Dirichlet boundary conditions. Once this is established, the general case follows directly.
We divide the proof in two steps.

\medskip

\noindent
\emph{Step 1. } First, we claim that if $A \in C^{1,\alpha}(B_1)$ and $F \in C^{1,\alpha}(B_1)$ then $u \in C^{1,\alpha} (B_{1/2})$ and $u$ satisfies the boundary condition \eqref{eq:BC:conormal:a:priori} in $\Sigma_0\cap B_{1/2}$.

\smallskip

Let us split the matrix $A$ in blocks as follows
\[
A=\left(\begin{array}{c c}
       A_1&A_2  \\ 
       A_2^T&A_3 
  \end{array}\right),
\]
where $A_1 : B_R\to \R^{d-n,d-n}$, $A_2 : B_R\to \R^{d-n,n}$,  $A_3 : B_R\to \R^{n,n}$. Let us consider the decomposition $F=(F_x,F_y)$, where $F_x=(F_{x_1}, \dots, F_{x_{d-n}})$ and $F_y=(F_{y_1}, \dots, F_{y_{n}})$ and define the scalar function 
$$g(x,y)=A^{-1}_3(x,0) F_2(x,0)\cdot y,$$
which belongs to $C^{1,\alpha}(B_1)$, since the block $A_3$ satisfies the uniformly elliptic condition \eqref{eq:unif:ell}, and
$$
g(x,0)=0,\qquad \D g(x,0)=\big(0,(A_3^{-1}F_2 )(x,0)\big).
$$
The function $v:= u- g \in \tilde{H}^{1,a}(B_1)$ is a weak solution to 
\[
\begin{cases}
-\div(|y|^a A \nabla v)= |y|^a f+\div(|y|^a (F-A\D g)), & \text{in }B_1\setminus \Sigma_{0},\\
v=0, & \text{on } \Sigma_0\cap B_1,
\end{cases}
\]
where the field $F-A\D g \in C^\infty(B_{1})$ satisfies 
$$(F-A\D g)(x,0)\cdot e_{y_i}=0.$$
Arguing as in the proof of Theorem \ref{teo:0}, by applying Lemma \ref{lem:approximation}, we can find a sequence $\{v_{\e_k}\}$ as $\e_k\to0$, such that every $v_{\e_k}$ is solution to 
\[
\begin{cases}
-\div(|y|^a A \nabla v_{\e_k})= |y|^a f+\div(|y|^a(F-A\D g)), & \text{in }B_{3/4}\setminus \Sigma_{\e_k},\\
v_{\e_k}=0, & \text{on } \partial \Sigma_{\e_k}\cap B_{3/4},
\end{cases}
\]
and $v_{\e_k}\to v$ in $H^{1,a}(B_{3/4}\setminus \Sigma_0)$ as $\e_k\to0$. 
By applying the Theorem \ref{teo:1alpha:e} to the sequences $\{v_{\e_k}\}$, combined with the estimate \eqref{eq:stima:rhs}, we get that
\begin{equation}\label{eq:teo:1:a}
\|v_{\e_k}\|_{C^{1,\alpha}(B_{1/2}\setminus\Sigma_{\e_k})}\le c \big(
\|v\|_{L^{2,a}(B_{1})}
+\|f\|_{L^{p,a}(B_{1})}
+\|F-A\D g\|_{C^{0,\alpha}(B_1)}
\big),
\end{equation}
for some $c>0$ which not depends on $\e_k$.

By applying Arzelà-Ascoli Theorem we get that ${v}_{\e_k}\to w$ in $C_{\loc}^{1,\gamma}(B_{1/2}\setminus\Sigma_0)$, for every $\gamma\in(0,\alpha)$, and by the a.e. convergences $v_{\e_k}\to v$ it follows that $w=v$. Moreover, by taking $z,z'\in B_{1/2}\setminus\Sigma_0$ such that $z\not=z'$, we have that
\begin{align*}
&\frac{|\D v(z)-\D v(z')|}{|z-z'|^\alpha}=\lim_{\e_k\to0}\frac{|\D v_{\e_k}(z)-\D v_{\e_k}(z')|}{|z-z'|^\alpha}\\
&\le c \big(
\|v\|_{L^{2,a}(B_{1})}
+\|f\|_{L^{p,a}(B_{1})}
+\|F-A\D g\|_{C^{0,\alpha}(B_1)}
\big),
\end{align*}
which implies that 
\begin{align*}
&[v]_{C^{1,\alpha}(B_{1/2}\setminus\Sigma_0)}=\sup_{\substack{z,z'\in B_{1/2}\setminus\Sigma_0\\ z \not= z'}}\frac{|\D v(z)-\D v(z')|}{|z-z'|^\alpha}\\
&\le c \big(
\|v\|_{L^{2,a}(B_{1})}
+\|f\|_{L^{p,a}(B_{1})}
+\|F-A\D g\|_{C^{0,\alpha}(B_1)}
\big),
\end{align*}
By continuity, we can extend $v$ to the whole $B_{1/2}$ in such a way that $[v]_{C^{1,\alpha}(B_{1/2})} = [v]_{C^{1,\alpha}(B_{1/2} \setminus \Sigma_0)}$.
Combining the previous inequality with the $L^\infty_{\loc}$ bound of solutions (see Lemma \ref{lem:moser}) and interpolation inequality in H\"older spaces \cite{GT01}*{Lemma 6.35}, we get
\[
\|v\|_{C^{1,\alpha}(B_{1/2})}\le c (\|v\|_{L^\infty(B_{1/2})}+[v]_{C^{1,\alpha}(B_{1/2})})
\le c \big(
\|v\|_{L^{2,a}(B_{1})}
+\|f\|_{L^{p,a}(B_{1})}
+\|F-A\D g\|_{C^{0,\alpha}(B_1)}
\big).
\]
Hence, $v\in C^{1,\alpha}(B_{1/2})$, which immediately implies $u \in C^{1,\alpha}(B_{1/2})$.

\smallskip

Next, let us prove that $u$ satisfies the boundary condition \eqref{eq:BC:conormal:a:priori}.
Let us fix $z=(x,y)\in B_{1/2}\setminus\Sigma_0$ and let $z^{0}_k$ be the projection of $z$ on $\partial\Sigma_{\e_k}\cap B_{1/2}$. By using Theorem \ref{teo:1alpha:e} and \eqref{eq:formal:conormal:eps}, we get
\begin{align*}
&|\D v(z) |\le
|\D v(z)-\D v_{\e_k}(z)  |+
| \D v_{\e_k}(z)- \D v_{\e_k}(z^{0}_k)  |+
|  \D v_{\e_k}(z^{0}_k) |\\
&\le |\D v(z)-\D v_{\e_k}(z)  |
+
[v_{\e_k}]_{C^{1,\alpha}(B_{1/2}\setminus\Sigma_{\e_k})}|z-z^{0}_k|^\alpha
+
{c} [v_{\e_k}]_{C^{1,\alpha}(B_{1/2}\setminus\Sigma_{\e_k})}\e_k^\alpha\\
&\le o(1)+c \big(\|v\|_{L^{2,a}(B_{1})}+\|f\|_{L^{p,a}(B_1)}+ \|F-A\D g\|_{C^{0,\alpha}(B_{1})} \big) |y|^\alpha,\quad\text{as }\e_k\to0.
\end{align*}
By taking the limit as $|y|\to 0$ in the previous inequality, we conclude that  $\D v = 0$ on $\Sigma_0\cap B_{1/2}$ and,
by construction, it follows that
\[
0=\D_x v(x,0) = \D_x u(x,0)-\D_x g(x,0) = \D_x u(x,0), 
\]
and 
\[
0=(A\D v)(x,0)\cdot e_{y_i}= (A\D u-A\D g)(x,0)\cdot e_{y_i}=(A\D u+F)(x,0)\cdot e_{y_i},
\]
that is, \eqref{eq:BC:conormal:a:priori} holds true.

\medskip
\noindent
\emph{Step 2.}  Finally, we prove that if $A,F \in C^{0,\alpha}(B_1)$, then $u \in C^{1,\alpha}(B_{1/2})$ and satisfies \eqref{eq:c1}, \eqref{eq:BC:conormal}.
Up to consider the function $u-\psi \in\tilde{H}^{1,a}(B_1)$, we can suppose that $u$ satisfies a Dirichlet homogeneous boundary condition $u=0$ on $\Sigma_0\cap B_1$.

Let $\{\rho_\delta\}_{\delta>0}$ be a family of smooth mollifiers and define $A_\delta := A \ast \rho_\delta$ and $F_\delta := F \ast\rho_\delta $, which satisfies $\|A_\delta\|_{C^{0,\alpha}(B_{3/4})}\le \|A\|_{C^{0,\alpha}(B_{1})}$ and $\|F_\delta\|_{C^{0,\alpha}(B_{3/4})}\le \|F\|_{C^{0,\alpha}(B_{1})}$.  Using an approximation argument similar to the one in Lemma \ref{lem:approximation}, but with a more standard approach, and following a method analogous to \cite{AFV24}*{Theorem 4.1} in a similar context, we construct a family $\{u_\delta\}_{\delta > 0}$ of solutions to 
\[
\begin{cases}
-\div(|y|^a A_\delta \nabla u_\delta)= |y|^a f+\div(|y|^a F_\delta), & \text{in }B_{4/5}\setminus \Sigma_{0},\\
u=0, & \text{on } \Sigma_0\cap B_{4/5},
\end{cases}
\]
satisfies the following properties
\[
\|u_\delta\|_{H^{1,a}(B_{4/5})}\le c (\|u\|_{H^{1,a}(B_1)}+\|f\|_{L^{2,a}(B_1)}+\|F\|_{L^{2,a}(B_1)}),
\]
\[
u_\delta\to u \text{ strongly in } H^{1,a}(B_{4/5}),
\]
for some $c>0$ depending only on $d$, $n$, $a$, $\lambda$ and $\Lambda$.

Since $A_\delta, F_\delta \in C^\infty(B_{4/5})$, by using \emph{Step 1}, we get that $u \in C^{1,\alpha} (B_{3/4})$ and $u$ satisfies the boundary condition \eqref{eq:BC:conormal:a:priori} on $\Sigma_0\cap B_{3/4}$.
Hence, the assumptions of the Proposition \ref{P:a:priori} are satisfies, which implies that $u_\delta$ satisfies \eqref{eq:c1:a:priori}, which immediately implies that
$$\|u_\delta\|_{C^{1,\alpha}(B_{1/2})}\le c (\|u\|_{L^{2,a}(B_1)}+\|f\|_{L^{2,a}(B_1)}+\|F\|_{L^{2,a}(B_1)}), $$
and the Arzel\'a-Ascoli Theorem allows us to taking the limit as $\delta\to 0$ to infer that $u \in C^{1,\alpha}(B_{1/2})$, satisfies \eqref{eq:c1} and the boundary condition \eqref{eq:BC:conormal}.
The proof is complete.
\end{proof}

\section{Regularity on curved manifolds}\label{section:curved}

In this section, we prove how to extend Theorems \ref{teo:0} and \ref{teo:1}, namely the local $C^{0,\alpha}$ and $C^{1,\alpha}$ regularity of weak solutions to \eqref{eq:1}, to a class of equations with weights that are singular on $(d-n)$-dimensional manifolds, that is, for weak solutions to \eqref{eq:1:cor}.

\smallskip

For $2\le n<d$, let $\Gamma\subset\R^d$ be a $(d-n)$-dimensional $C^1$-manifold such that $0\in\Gamma$ and there exists a parametrization $\varphi\in C^1(\Sigma_0\cap B_1;\R^n)$ such that, up to perform a dilation, we have 
\begin{equation}\label{eq:parametrization}
B_1\cap \Gamma=\{(x,y):y =\varphi(x)\}\cap B_1,\quad\varphi(0)=0.
\end{equation}
The diffeomorphism
\begin{equation}\label{eq:diffeomorphism}
    \Phi(x,y):=(x,y+\varphi(x)),
\end{equation}
whose inverse straightens the lower dimensional boundary $\Gamma$ to $\Sigma_0$, satisfies $\Phi(\Sigma_0\cap B_1)\subset\Gamma\cap B_1$ and the Jacobian associated to $\Phi$ is
\[
J_\Phi(x,y)=\left(\begin{array}{c c}
       {I}_{d-n} & { 0}  \\ 
       J_\varphi(x) & {I}_{n}\\ 
  \end{array}\right), \quad \text{with } |\det J_\Phi|\equiv1.
\]
Given a $(d-n)$-dimensional manifold $\Gamma$ parametrized by $\varphi$, we give the definition of admissible weights with respect to $\varphi$.

\begin{defi}\label{def:rho}
Let $2\le n <d$ and $\alpha\in[0,1)$.
Let $\Gamma$ be a $(d-n)$-dimensional $C^{1,\alpha}$-manifold and $\varphi\in C^{1,\alpha}(\Sigma_0\cap B_1;\R^n)$ be a parametrization of $\Gamma$ in the sense of \eqref{eq:parametrization} and define the diffeomorphism $\Phi$ as in \eqref{eq:diffeomorphism}.

We say that $\delta$ is an $\alpha$-admissible weight with respect to the parametrization $\varphi$ if $\delta \in C^{0,1}(B_1)$ and the two following condition holds true.
\begin{itemize}
\item[i)] There exist constants $0< c_0 \le c_1$ such that \begin{equation}\label{eq:distance} 
c_0\le \frac{ \delta}{\dist_\Gamma}  \le c_1.
\end{equation}
\item[ii)] 
\begin{equation}\label{eq:rho:delta}
\tilde \delta (x,y):=\frac{\delta(\Phi(x,y))}{|y|}\in C^{0,\alpha}(B_1).
\end{equation}
\end{itemize}

\end{defi}

We point out that the condition i) in the Definition \ref{def:rho} implies that $\delta$ in a equivalent distance from $\Gamma$ to the standard distance function $\dist_\Gamma$. So, given $\delta$, which satisfies condition $i)$, similarly to what was done in Section \ref{section:sobolev}, we can define the functional spaces $H^1 (B_1,\delta^a)$, which is equivalent to $H^1 (B_1,\dist_\Gamma^a)$. Then, by Theorem \ref{teo:NEK}, it follows that it is well defined the bounded linear trace operator $T:H^1 (B_1,\delta^a)\to L^2(\Gamma\cap B_1)$ such that $Tu=u_{|_{\Gamma\cap B_1}}$, for every $u\in C^{\infty}(\overline{B_1})$.
In light of this, we define a notion of weak solutions for the equation \eqref{eq:1:cor}, which satisfy a Dirichlet boundary on $\Gamma$.

\begin{defi}\label{def:weak:sol:curva}
Let $2\le n< d$, $a+n\in(0,2)$, $A$ be a matrix satisfying \eqref{eq:unif:ell}. Let $\Gamma$ be a $(d-n)$-dimensional $C^1$-manifold and $\varphi\in C^1(\Sigma_0\cap B_1;\R^n)$ be a parametrization of $\Gamma$ in the sense of \eqref{eq:parametrization}, $\delta \in C^{0,1}(B_1)$ satisfying i) of the Definition \ref{def:rho}.
Let $f\in L^2(B_1,\delta^a)$, $F\in L^2(B_1,\delta^a)^d$ and $\psi \in L^2(\Gamma\cap B_1)$. 

We say that $u$ is a weak solution to
\eqref{eq:1:cor} if $u\in H^1 (B_1,\delta^a)$, satisfies
\begin{equation}\label{eq:weak:sol:rho}
\int_{B_1}\delta^a A\D u\cdot \D\phi dz= \int_{B_1}\delta^a (f\phi-F\cdot\D\phi) dz,
\end{equation}
for every $\phi\in C_c^\infty(B_1\setminus\Gamma)$ and $u=\psi$ in $L^2(\Gamma\cap B_1)$, in the sense of the trace.
\end{defi}

Finally, we can state the main results of this section, namely $C^{0,\alpha}_\loc$ and $C^{1,\alpha}_\loc$ regularity for weak solutions to \eqref{eq:1:cor}.

\begin{cor}\label{cor:0}
Let $2\le n < d$, $a+n\in(0,2)$, $p>{d}/{2}$, $q>d$ and $\alpha$ satisfying \eqref{eq:alpha:0}. Let $\varphi\in C^1(\Sigma_0\cap B_1;\R^n)$ be the parametrization defined in \eqref{eq:parametrization}. Let $\delta$ be a $0$-admissible weight with respect to $\varphi$ in the sense of Definition \ref{def:rho} and define $\tilde{\delta}\in C^{0}(B_1)$ as in \eqref{eq:rho:delta}.
Let $A$ be a continuous matrix satisfying \eqref{eq:unif:ell}, $f\in{L^{p}(B_{1},\delta^a)} $, $F\in{L^{q}(B_{1},\delta^a)}^d $ and $\psi \in C^{0,1}(\Gamma\cap B_1)$. Let $u$ be a weak solution to \eqref{eq:1:cor}, in the sense of Definition \ref{def:weak:sol:curva}. Let us suppose that $$\|A\|_{C^0 (B_1)}+\|\tilde \delta\|_{C^{0} (B_1)}+\|\varphi\|_{C^1 (\Sigma_0\cap B_1;\R^n)}\le L.$$

Then, $u\in C^{0,\alpha}(B_{1/2})$ and there exists a constant $c>0$, depending only on $d$, $n$, $a$, $\lambda$, $\Lambda$, $p$, $q$, $\alpha$ and $L$ such that
\begin{equation}\label{eq:cor:c0}
\|u\|_{C^{0,\alpha}(B_{1/2})}  \le c\big(
\|u\|_{L^{2}(B_{1},\delta^a)}
+\|f\|_{L^{p}(B_{1},\delta^a)}
+\|F\|_{L^{q}(B_{1},\delta^a)^d}
+\|\psi\|_{C^{0,1}(\Gamma \cap B_{1})}
\big).
\end{equation}

\end{cor}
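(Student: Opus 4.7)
The plan is to reduce Corollary \ref{cor:0} to Theorem \ref{teo:0} by using the diffeomorphism $\Phi$ from \eqref{eq:diffeomorphism} to straighten the curved manifold $\Gamma$ into the flat $\Sigma_0$. Setting $v(w) := u(\Phi(w))$ with $w = (\xi, \eta)$, and using $|\det J_\Phi| \equiv 1$ together with the identity $\delta(\Phi(w))^a = |\eta|^a \tilde\delta(w)^a$ built into Definition \ref{def:rho}, a direct change of variables in the weak formulation \eqref{eq:weak:sol:rho} shows that $v$ solves an equation of the form \eqref{eq:1} on a ball $B_r \subset \Phi^{-1}(B_1)$, with transformed data
\[
\bar A := \tilde\delta^a J_\Phi^{-1}(A \circ \Phi) J_\Phi^{-T}, \qquad \bar f := \tilde\delta^a (f \circ \Phi), \qquad \bar F := \tilde\delta^a J_\Phi^{-1}(F \circ \Phi),
\]
and boundary datum $\bar\psi(\xi) := \psi(\xi, \varphi(\xi))$ on $\Sigma_0 \cap B_r$.

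The next step is to verify that the transformed data satisfy the hypotheses of Theorem \ref{teo:0}. Since $\Phi$ is bi-Lipschitz with constants controlled by $\|\varphi\|_{C^1} \le L$, one has $\dist_\Gamma(\Phi(w)) \sim |\eta|$ locally, which combined with condition (i) of Definition \ref{def:rho} ensures that $\tilde\delta$ is bounded above and below by positive constants. Consequently $\bar A$ is continuous and uniformly elliptic with constants depending only on $\lambda, \Lambda, L$ and the admissibility constants. The integrability of $\bar f$ and $\bar F$ in the weighted Lebesgue spaces $L^{p,a}(B_r)$ and $L^{q,a}(B_r)^d$ follows from the change-of-variables formula together with the equivalence $|\eta|^a \sim \delta(\Phi(w))^a$, while $\bar\psi \in C^{0,1}(\Sigma_0 \cap B_r)$ is immediate from $\psi \in C^{0,1}(\Gamma \cap B_1)$ and $\varphi \in C^1$.

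Applying Theorem \ref{teo:0} to $v$ then yields $v \in C^{0,\alpha}(B_{r/2})$ with a quantitative estimate, and pulling back via the Lipschitz map $\Phi^{-1}$ gives $u \in C^{0,\alpha}(\Phi(B_{r/2}))$ with the corresponding bound. To reach the whole ball $B_{1/2}$, I would employ a standard covering argument: around each point $z_0 \in \overline{B_{1/2}} \cap \Gamma$ I would translate coordinates to $z_0$ and apply the local straightening procedure; around points $z_0 \in \overline{B_{1/2}} \setminus \Gamma$, the weight $\delta^a$ is bounded above and below on a small ball around $z_0$, so classical interior H\"older theory for uniformly elliptic equations applies. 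Finitely many such balls cover $\overline{B_{1/2}}$ by compactness, and combining the local estimates yields \eqref{eq:cor:c0}.

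The step I expect to be most delicate is the careful bookkeeping of constants through the change of variables and the covering argument, so that the final estimate depends only on the stated parameters. The explicit computation of the transformed equation and the verification of the ellipticity of $\bar A$ are routine once the bi-Lipschitz bounds on $\Phi$ and the two-sided bounds on $\tilde\delta$ are established.
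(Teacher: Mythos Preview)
Your proposal is correct and follows essentially the same route as the paper: straighten $\Gamma$ to $\Sigma_0$ via the diffeomorphism $\Phi$, change variables in the weak formulation using $|\det J_\Phi|\equiv 1$ and $\delta(\Phi(w))^a=|\eta|^a\tilde\delta(w)^a$, verify that the transformed data $\bar A,\bar f,\bar F,\bar\psi$ satisfy the hypotheses of Theorem~\ref{teo:0}, apply it, and compose back. The paper handles the domain issue by the clause ``up to perform a dilation'' in \eqref{eq:parametrization} rather than through an explicit covering argument, but this is a cosmetic difference; your extra care there is harmless.
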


\begin{cor}\label{cor:1}
Let $2\le n < d$, $a+n\in(0,1)$, $p>{d}$ and $\alpha$ satisfying \eqref{eq:alpha:1}. Let $\varphi\in C^{1.\alpha}(\Sigma_0\cap B_1;\R^n)$ be the parametrization defined in \eqref{eq:parametrization}. Let $\delta$ be a $\alpha$-admissible weight with respect to $\varphi$ in the sense of Definition \ref{def:rho} and define 
$\tilde \delta  \in C^{0,\alpha}(B_1)$, as in \eqref{eq:rho:delta}. 
Let $A$ be a $\alpha$-H\"older continuous matrix satisfying \eqref{eq:unif:ell}, $f\in{L^{p}(B_{1},\delta^a)} $, $F\in{C^{0,\alpha}(B_{1})} $, $\psi \in C^{1,\alpha}(\Gamma\cap B_1)$ and $u$ be a weak solution to \eqref{eq:1:cor}, in the sense of Definition \ref{def:weak:sol:curva}.
Let us suppose that $$\|A\|_{C^{0,\alpha} (B_1)}+\|\tilde\delta\|_{C^{0,\alpha} (B_1)}+\|\varphi\|_{C^{1,\alpha} (\Sigma_0\cap B_1;\R^n)}\le L.$$

Then, $u\in C^{1,\alpha}(B_{1/2})$ and there exists a constant $c>0$, depending only on $d$, $n$, $a$, $\lambda$, $\Lambda$, $p$, $\alpha$ and $L$ such that
\begin{equation}\label{eq:cor:c1}
\|u\|_{C^{1,\alpha}(B_{1/2})}  \le c\big(
\|u\|_{L^{2}(B_{1},\delta^a)}
+\|f\|_{L^{p}(B_{1},\delta^a)}
+\|F\|_{C^{0,\alpha}(B_{1})}
+\|\psi\|_{C^{1,\alpha}(\Gamma \cap B_{1})}
\big).
\end{equation}

Moreover, denoting by $T_z\Gamma$ the tangent space to $\Gamma$ at the point $z\in\Gamma$, we have that $u$ satisfies the following boundary condition for every $z\in \Gamma\cap B_{1/2}$,
\begin{equation}\label{eq:BC:curve}
\begin{cases}
\D u(z)\cdot \tau(z) =\D\psi(z)\cdot \tau(z),& \text{ for every }\tau(z)\in T_z{\Gamma} , \\
(A\D u +F) (z) \cdot \nu(z)= 0,&  \text{ for every } \nu(z)\perp T_z{\Gamma} .
\end{cases}
\end{equation}

\end{cor}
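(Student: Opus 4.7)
The plan is to reduce Corollary \ref{cor:1} to Theorem \ref{teo:1} by straightening the curved manifold $\Gamma$ to $\Sigma_0$ via the $C^{1,\alpha}$ diffeomorphism $\Phi(x,y)=(x,y+\varphi(x))$ introduced in \eqref{eq:diffeomorphism}, and then pulling the conclusions back through $\Phi$.

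\emph{Change of variables.} Set $v := u \circ \Phi$ on some ball $B_r \subset \Phi^{-1}(B_1)$. A direct computation gives $\nabla v(\zeta) = J_\Phi(\zeta)^T\,(\nabla u)(\Phi(\zeta))$, so $(\nabla u)\circ \Phi = J_\Phi^{-T}\nabla v$, and by \eqref{eq:rho:delta} we have $\delta\circ\Phi = |y|\,\tilde\delta$. Using $|\det J_\Phi|\equiv 1$ and substituting test functions of the form $\phi = \tilde\phi\circ\Phi^{-1}$ in the weak formulation \eqref{eq:weak:sol:rho}, one verifies that $v$ is a weak solution to
\[
\begin{cases}
-\div(|y|^a \tilde A\, \nabla v) = |y|^a \tilde f + \div(|y|^a \tilde F), & \text{in } B_r\setminus\Sigma_0,\\
v = \tilde\psi, & \text{on } \Sigma_0\cap B_r,
\end{cases}
\]
where $\tilde A := \tilde\delta^a\,J_\Phi^{-1}(A\circ\Phi)\,J_\Phi^{-T}$, $\tilde F := \tilde\delta^a\,J_\Phi^{-1}(F\circ\Phi)$, $\tilde f := \tilde\delta^a (f\circ\Phi)$, and $\tilde\psi := \psi\circ\Phi|_{\Sigma_0}$.

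\emph{Verification of the hypotheses and application of Theorem \ref{teo:1}.} Condition i) of Definition \ref{def:rho}, together with the fact that $\dist_\Gamma\circ\Phi$ is comparable to $|y|$ (since $\Phi$ is bi-Lipschitz and sends $\Sigma_0$ onto $\Gamma$), gives that $\tilde\delta$ is bounded between two positive constants; condition ii) then yields $\tilde\delta\in C^{0,\alpha}$. Since $\varphi\in C^{1,\alpha}$, both $J_\Phi$ and $J_\Phi^{-1}$ are in $C^{0,\alpha}(B_1)$. Consequently $\tilde A$ is symmetric, uniformly elliptic with constants depending only on $\lambda,\Lambda,L$, and $\alpha$-Hölder; analogously $\tilde F \in C^{0,\alpha}(B_r)$ and $\tilde\psi\in C^{1,\alpha}(\Sigma_0\cap B_r)$, while a direct change of variables yields $\|\tilde f\|_{L^{p,a}(B_r)}\le c\,\|f\|_{L^p(B_1,\delta^a)}$. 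Theorem \ref{teo:1} then provides $v\in C^{1,\alpha}(B_{r/2})$ with the quantitative estimate and the conormal identity \eqref{eq:BC:conormal}.

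\emph{Pulling back and boundary condition.} Since $\Phi$ is a $C^{1,\alpha}$ diffeomorphism, the regularity $u = v\circ\Phi^{-1}\in C^{1,\alpha}$ near $0$ is immediate, together with \eqref{eq:cor:c1} after unraveling the norms. For the boundary identity \eqref{eq:BC:curve}, note that the vectors $\partial_{x_i}\Phi(x_0,0)$ span $T_{\Phi(x_0,0)}\Gamma$, while the vectors $J_\Phi(x_0,0)^{-T}e_{y_j}$ (for $j=1,\dots,n$) span the normal space to $\Gamma$ at $\Phi(x_0,0)$; exploiting $\nabla u\circ\Phi = J_\Phi^{-T}\nabla v$, the first identity in \eqref{eq:BC:conormal} becomes the tangential identity in \eqref{eq:BC:curve}, and the conormal identity for $v$, combined with $\tilde\delta^a>0$, becomes the conormal identity for $u$. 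A finite covering of $\Gamma\cap B_{1/2}$ by such charts, together with classical interior Schauder theory away from $\Gamma$, extends the local estimate to all of $B_{1/2}$.

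The main hurdle is bookkeeping: one must verify that absorbing the scalar factor $\tilde\delta^a$ into the new matrix $\tilde A$ preserves symmetry, uniform ellipticity and $C^{0,\alpha}$ regularity with constants controlled only by the data $(\lambda,\Lambda,L)$, and that the normal space to $\Gamma$ is correctly identified through $J_\Phi^{-T}$ so that the conormal condition for $v$ translates faithfully into \eqref{eq:BC:curve} for $u$.
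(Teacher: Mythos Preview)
Your proposal is correct and follows essentially the same approach as the paper: straighten $\Gamma$ to $\Sigma_0$ via $\Phi$, check that the transformed data $\tilde A=\tilde\delta^a J_\Phi^{-1}(A\circ\Phi)J_\Phi^{-T}$, $\tilde F$, $\tilde f$, $\tilde\psi$ satisfy the hypotheses of Theorem~\ref{teo:1}, apply that theorem, and then compose back with $\Phi^{-1}$ to recover the estimate and translate \eqref{eq:BC:conormal} into \eqref{eq:BC:curve} using that $J_\Phi e_{x_j}$ span $T_z\Gamma$ and $J_\Phi^{-T}e_{y_i}$ span its orthogonal complement. The only addition in your version is the explicit mention of a covering argument, which the paper leaves implicit.
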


Since the proofs of Corollaries \ref{cor:0} and \ref{cor:1} are quite similar, we will only provide the proof of the second one, as it is more involved.

\begin{proof}[Proof of Corollary \ref{cor:1}]
Let us define the diffeomorphism $\Phi$ as in \eqref{eq:diffeomorphism}, 
which is of class $C^{1,\alpha}(B_1)$, by the assumption $\varphi\in C^{1,\alpha}(\Sigma_0\cap B_1)$. 
Defining
$$\tilde{u}(x,y):=u\circ \Phi(x,y),\qquad \tilde\psi(x)=\psi \circ \Phi(x,0) \in C^{1,\alpha}(\Sigma_0\cap B_1).$$
we have that $\tilde{u} \in H^{1,a}(B_1)$ and $\tilde u=\tilde \psi$ on $\Sigma_0\cap B_1$ in the sense of the traces.
Since $\delta$ is an $\alpha$-admissible weight in the sense of Definition \ref{def:rho}, the conditions \eqref{eq:distance} and \eqref{eq:rho:delta} implies that
\begin{equation}\label{eq:rho:B:*}
\tilde\delta^a (x,y) = \frac{\delta(\Phi(x,y))^a}{|y|^a} \in C^{0,\alpha}(B_1),\quad \tilde\delta^a\ge \tilde{c}_0>0,
\end{equation}
for some constant $\tilde c_0>0$, where $\tilde{\delta}=\delta\circ \Phi$.

Let $\phi\in C_c^\infty(B_1\setminus\Gamma)$ be a test function in \eqref{eq:weak:sol:rho}. By taking the change of variables $z=\Phi(x,y)$ it follows that
\begin{equation}\label{eq:int:curve}
0=\int_{B_1}\delta^a \big(A\D u\cdot\D\phi-f\phi+F\cdot\D\phi\big) dz
=\int_{B_1}|y|^a\big(\tilde A\D\tilde u\cdot\D\tilde\phi-\tilde f\tilde\phi+\tilde F\cdot\D\tilde \phi\big) dz,
\end{equation}
where 
\begin{align}\label{eq:tilde:u:f:A}
\tilde{A}:=  \tilde{\delta}^a (J_\Phi^{-1})(A\circ \Phi)(J_\Phi^{-1})^T,
\quad  \tilde{f}:= \tilde{\delta}^a f\circ\Phi,\quad \tilde{F}:=  \tilde{\delta}^a (J_\Phi^{-1}) F\circ\Phi,\quad \tilde{\phi}:= \phi\circ\Phi.
\end{align} 
By using \eqref{eq:rho:B:*}, we have that
$$\tilde{A}\in C^{0,\alpha}(B_1) \text{ and satisfies }\eqref{eq:unif:ell},\quad \tilde{f} \in  L^{p,a}(B_1),\quad \tilde{F}\in C^{0,\alpha}(B_1).$$
Hence, we have proved that $\tilde{u}$ is a weak solution to
\begin{equation*}
\begin{cases}
-\div(|y|^a \tilde{A} \nabla \tilde{u})= |y|^a \tilde{f}+\div(|y|^a\tilde{F}), & \text{in }B_1\setminus \Sigma_{0},\\
\tilde u=\tilde{\psi}, & \text{on } \Sigma_0 \cap B_1,
\end{cases}
\end{equation*}
and $\tilde u$ satisfies the hypothesis of the Theorem \ref{teo:1}.
Hence, $\tilde u$ satisfies \eqref{eq:c1} and composing back with the diffeomorphism $\Phi^{-1}$ we get that $u$ satisfies \eqref{eq:cor:c1}.

\smallskip

Eventually, let us prove that $u$ satisfies the boundary condition \eqref{eq:BC:curve}.
By Theorem \ref{teo:1}, we have that $\tilde u$ satisfies the boundary condition
\begin{equation}\label{eq:BC:bar:u}
\D_x \tilde u=\D_x\tilde \psi, \quad
(\tilde A\D \tilde u +\tilde F) \cdot e_{y_i}= 0,\quad
 \text{on } \Sigma_0\cap B_{1/2}, \text{ for every } i=1,\dots,n.
\end{equation}
Since $\tilde{u}(z)=u(\Phi(z))$, one has that 
\[
\D \tilde{u}(z) = J_\Phi^T(z) \D u(\Phi(z)).
\]
and, noting $\Phi(x,0)=(x,\varphi(x))\in \Gamma$, we have that
$$
\D \tilde{u}(x,0)  \cdot e_{x_j} = J_\Phi^T(x,0) \D u(\Phi(x,0))
\cdot e_{x_j} =  \D u(x,\varphi(x))\cdot J_\Phi(x,0) e_{x_j}.
$$
Thus, for every $j=1,\dots,d-n$, and $(x,0)\in \Sigma_0\cap B_{1/2}$, we have
\begin{equation}\label{eq:tau:x:curve}
\D u(x,\varphi(x)) \cdot J_\Phi(x,0) e_{x_j} = \D \psi (x,\varphi(x)) \cdot J_\Phi(x,0) e_{x_j}.
\end{equation}
Next, recalling \eqref{eq:rho:B:*}, \eqref{eq:tilde:u:f:A} and \eqref{eq:BC:bar:u}, we find
\begin{align}\label{eq:nu:y:curve}
\begin{split}
0=\tilde{\delta}^{-a}(\tilde {A}\D \tilde  u +\tilde F)(x,0) \cdot e_{y_i} &=  J_\Phi^{-1}(x,0)(A\D u +F)(x,\varphi(x))\cdot e_{y_i} \\
&= (A\D u +F)(x,\varphi(x))\cdot (J_\Phi^{-1})^T(x,0) e_{y_i}.
\end{split}
\end{align}
Finally, observing that the tangent space to $\Gamma$ at the point $z=(x,\varphi(x))$ is given by
$$T_{(x,\varphi(x))}\Gamma :=\{(\xi,J_\varphi(x) \xi ): \xi\in \R^{d-n}\},$$ 
and noting that
$$
J_\Phi(x,0) e_{x_j}\cdot  (J_\Phi^{-1})^T(x,0) e_{y_i} = 0, \quad \text{for every } j={1,\dots,d-n},\quad i=1,\dots,n,
$$
it follows that \eqref{eq:tau:x:curve} and \eqref{eq:nu:y:curve} implies that \eqref{eq:BC:curve} holds true. This complete the proof.
\end{proof}

\section*{Acknowledgements} 
I would like to thank Gabriele Cora, Susanna Terracini and Stefano Vita for many very helpful suggestions on the paper.

\end{document}